\documentclass[a4paper,12pt]{article}

\makeatletter
\@addtoreset{footnote}{page}
\makeatother

\usepackage{style-enumitem}

\usepackage{amsthm}
\usepackage{amsmath,amssymb,latexsym,amsfonts,mathrsfs}

\renewcommand{\hat}{\widehat}
\renewcommand{\tilde}{\widetilde}
\renewcommand{\bar}{\overline}
\newcommand{\un}[1]{\underline{#1}}

\newcommand{\absol}[1]{\left| #1 \right|} 
\newcommand{\rbra}[1]{\!\left( #1 \right)} 
\newcommand{\cbra}[1]{\!\left\{ #1 \right\}} 
\newcommand{\sbra}[1]{\!\left[ #1 \right]} 

\newcommand{\bN}{\ensuremath{\mathbb{N}}}

\newcommand{\bR}{\ensuremath{\mathbb{R}}}

\newcommand{\cB}{\ensuremath{\mathcal{B}}}

\newcommand{\cF}{\ensuremath{\mathcal{F}}}

\newcommand{\cK}{\ensuremath{\mathcal{K}}}

\newcommand{\cN}{\ensuremath{\mathcal{N}}}

\newcommand{\cT}{\ensuremath{\mathcal{T}}}
\newcommand{\cU}{\ensuremath{\mathcal{U}}}
\newcommand{\cV}{\ensuremath{\mathcal{V}}}
\newcommand{\cW}{\ensuremath{\mathcal{W}}}

\newcommand{\bfE}{\ensuremath{{\mathbf{E}}}}

\newcommand{\bfP}{\ensuremath{{\mathbf{P}}}}

\newcommand{\bfb}{\ensuremath{{\mathbf{b}}}}

\newcommand{\bfq}{\ensuremath{{\mathbf{q}}}}
\newcommand{\bfr}{\ensuremath{{\mathbf{r}}}}

\newcommand{\frq}{\ensuremath{{\mathfrak{q}}}}

\newcommand{\rmP}{\ensuremath{{\mathrm{P}}}}

\newcommand{\diff}{\ensuremath{{\mathrm{d}}}}

\theoremstyle{plain}
\newtheorem{Thm}{Theorem}[section]

\newtheorem{Lem}[Thm]{Lemma}
\newtheorem{Prop}[Thm]{Proposition}
\newtheorem{Cor}[Thm]{Corollary}

\theoremstyle{definition}

\newtheorem{Rem}[Thm]{Remark}

\numberwithin{equation}{section}

\makeatletter
\renewcommand\section{\@startsection {section}{1}{\z@}%
                                   {-3.5ex \@plus -1ex \@minus -.2ex}%
                                   {2.3ex \@plus.2ex}%
                                   {\normalfont\large\bf}}
\makeatother

\makeatletter
\renewcommand\subsection{\@startsection {subsection}{1}{\z@}%
                                   {-3.5ex \@plus -1ex \@minus -.2ex}%
                                   {2.3ex \@plus.2ex}%
                                   {\normalfont\normalsize\bf}}
\makeatother

\usepackage{color}
\allowdisplaybreaks[3]
\usepackage{mathtools}
\mathtoolsset{showonlyrefs=true}

\newcommand{\pcb}{{\rm pcb}}

\begin{document}

\begin{center}
{\Large \bf 
Stochastic control with dividend payments and capital injections for Markov additive processes
}
\end{center}
\begin{center}
Kei Noba
\end{center}

\begin{abstract}
Motivated by de Finetti's optimal dividend problem with capital injections, we study a stochastic control problem for the additive component of a Markov additive process (MAP). 
In contrast to previous studies, the modulating component is allowed to be a general right process on a Radon space, so the model is not restricted to finite-state regime switching and cannot in general be reduced to a finite collection of L\'evy process control problems. 
Capital injections are allowed at arbitrary times. 
We first consider the case in which dividend payments are allowed only at prescribed discrete times and establish necessary and sufficient conditions for the optimality of a strategy. 
These conditions then yield the optimality of a class of Markov-modulated periodic--classical barrier strategies. 
Combining this optimality result with an approximation argument, we obtain insight into the possible 
form of optimal strategies in the case where dividend payments, like capital injections, may be made at arbitrary times.
Because of the generality of the MAPs considered here, the proof techniques used in previous studies of similar problems are not directly applicable. 
We therefore develop an alternative argument based on the additive structure of MAPs and dynamic programming between dividend opportunities. 
The argument also suggests a possible approach to other stochastic control problems involving general MAPs.

\end{abstract}

\section{Introduction}
This paper studies a stochastic control problem for the additive component of a Markov additive process (MAP), motivated by de Finetti's optimal dividend problem with capital injections. 
In the financial interpretation, a firm pays dividends to its shareholders and may receive costly capital injections to avoid bankruptcy. The objective is to determine a dividend and capital injection strategy that maximizes the expected net present value (NPV) of dividends paid minus the cost of capital injections. 
We first consider a setting in which capital injections can be made at any time, while dividend payments are allowed only at renewal times with independent and identically distributed interarrival times. We refer to this case as the periodic--classical setting. We then use an approximation argument to examine the classical--classical setting, where dividend payments can be made at arbitrary times. For the periodic--classical setting, we establish necessary and sufficient conditions for optimality and prove the optimality of a class of Markov-modulated periodic--classical barrier (MMPCB) strategies. By the approximation, we obtain insight into 
the possible form 
of optimal strategies in the classical--classical setting.
\par
In this paper, we assume that the firm's uncontrolled capital process is given by the additive component of a MAP. 
A MAP is, roughly speaking, a right process consisting of two components, $X=\{X_t:t\geq0\}$, the additive component, and $Y=\{Y_t:t\geq0\}$, the modulating component. 
We assume that $X$ takes values in $\bR$ and $Y$ takes values in a Radon space $(E, \cB(E))$. 
The essential feature of a MAP is that it satisfies the following additive property:
for $(x, y)\in \bR \times E$, any non-negative measurable function $f$ and $t\geq 0$, 
\begin{align}
\bfE_{(x,y)} \sbra{f(X_{t}, Y_{t}) 
}
=
\bfE_{(0, y)} \sbra{f(X_{t}+x, Y_{t}) 
}.   \label{200_01}
\end{align}
Here, $\bfE_{(x,y)} $ denotes expectation under $\bfP_{(x, y)}$. 
In \cite{Cin1972a, Cin1972b}, 
MAPs were defined in a more general setting, and several fundamental properties were established.
When considering $Y$ alone, $Y$ itself is a right process, and the dynamics of $X$ depend on the behavior of $Y$.
MAPs can be regarded as a generalization of L\'evy processes. Indeed, if $E$ consists of a single point, then $X$ is a L\'evy process. 
Furthermore, when $E$ is a finite state space, $Y$ is a continuous-time Markov chain and we can construct $X$ by adding jumps and concatenating L\'evy processes whenever $Y$ changes state (for details, see, e.g. \cite[Chapter XI]{Asm2003} or \cite[Chapter 11]{KypPar2022}).
This viewpoint is made precise by Proposition 2.20 and Theorem 2.22 of \cite{Cin1972b}, which show that, roughly speaking, conditional on the path of $Y$, the process $X$ behaves like an additive process.
In our context, the modulating component $Y$ may be interpreted as describing changes in the economic environment, while the additive component $X$ represents the evolution of the firm's capital over time. 
The present paper studies the optimal dividend problem with capital injections for such MAP models. 
As will be explained in detail below, unlike previous studies in which the modulating component was restricted to a finite-state process, we allow $Y$ to be a general right process on a Radon space. 
This level of generality prevents a direct application of the proof strategies used in previous related work and 
leads us to develop a different approach.
Although our formulation is motivated by a particular dividend problem, the proof strategy developed here is not tied to this application alone.
One of the aims of this paper is to develop an argument for verifying optimality that may also be useful for a range of stochastic control problems involving general MAPs.
\par
There is already a substantial literature on this problem and closely related
problems when $E$ is finite (i.e., when $X$ is a L\'evy process or can be constructed by concatenating L\'evy processes). 
Since the literature in this direction is extensive, we discuss only those works whose settings are most closely related to the setting considered in the present paper. 
First, the work of \cite{AvrPalPis2007} played a crucial role in these studies. The authors of \cite{AvrPalPis2007} demonstrated the optimality of double barrier strategies in the classical--classical setting under the assumption that $X$ is a spectrally negative L\'evy process (a L\'evy process with no positive jumps and no monotone paths). 
Spectrally negative L\'evy processes are associated with scale functions.
Scale functions have been studied extensively, and their properties are now well understood (see, e.g., \cite{KuzKypRiv2012} or \cite[Chapter 8]{Kyp2014}). The authors of \cite{AvrPalPis2007} proved optimality by expressing the expected NPV under the candidate optimal strategy in terms of the scale function and showing that this expected NPV satisfies the Hamilton--Jacobi--Bellman (HJB) variational inequality. 
Subsequent studies employed this proof strategy for the classical--classical setting \cite{BayKypYam2013} and for the periodic--classical setting \cite{PerYam2017_2, NobPerYamYan2018} when $X$ is a spectrally one-sided L\'evy process. These studies have established the optimality of double barrier strategies and periodic--classical barrier strategies, respectively. 
Furthermore, \cite{Nob2021} and \cite{MatNobPer2025+} developed an alternative approach to the proof using the scale function for the classical--classical setting and the periodic--classical setting, respectively, when $X$ is a more general L\'evy process. 
The approach used in \cite{AvrPalPis2007} has also influenced subsequent work on a range of stochastic control problems driven by L\'evy processes, in which the optimality of explicit strategies is often verified via HJB variational inequalities.
Several prior studies also exist for the case in which $E$ consists of finitely many points.
Specifically, in the classical--classical setting, the case where $X$ is spectrally one-sided was studied in \cite{YanWan2026}, and the case where $X$ is more general was studied in \cite{MatNobPerYam2024}. In the periodic--classical setting, the spectrally one-sided case was studied in \cite{MatMorNobPer2023} and \cite{BoWanYan2025}. 
These studies followed the approach adopted in \cite{JiaPis2012}. 
\begin{enumerate}
\item By decomposing $X$ at the switching times of the constituent L\'evy processes, they reduced the problem to auxiliary stochastic control problems for L\'evy processes.
\item For these auxiliary problems, the optimality of double barrier strategies and periodic--classical barrier strategies was determined using an approach similar to that in \cite{AvrPalPis2007}.
\item Applying the results from (ii) to MAPs via dynamic programming (DP), they proved the optimality of Markov-modulated double barrier strategies and MMPCB strategies. 
\end{enumerate}
However, to the best of our knowledge, no prior work has addressed the setting considered here, in which $Y$ is a general right process. 
\par
The generality of the present MAP setting prevents us from using the standard proof strategy described above, which was developed for L\'evy and finite-state MAP models. 
Indeed, since we cannot construct $X$ by concatenating L\'evy processes at discrete switching times, as in prior works with finite $E$, we cannot reduce the problem to auxiliary stochastic control problems for L\'evy processes. 
More importantly, many of the HJB-based arguments in the preceding literature rely on the Meyer--It\^o formula or related generator calculations. 
Such arguments are not directly applicable in the present setting, because the modulating component $Y$ is allowed to be a general right process on a Radon space.
As a result, in this study, we first consider the periodic--classical setting and, broadly speaking, we adopt the following procedure to prove optimality: 
\begin{enumerate}
\item We demonstrate that a particular form of the capital injection component is necessary for optimality, thereby reducing the problem to one that considers only the dividend portion. 
\item We decompose the time horizon into intervals between dividend payment opportunities. Then, we use the DP to derive the necessary and sufficient conditions for a strategy to be optimal. 
Furthermore, these conditions also yield the optimality of certain MMPCB strategies. 
At this stage, the additivity property \eqref{200_01} is crucial. 
\item We characterize the conditions for optimality using the Laplace transform of hitting times. 
\end{enumerate}
This yields a result that includes, except for the representations in terms of scale functions, the results in \cite{PerYam2017_2, NobPerYamYan2018, MatMorNobPer2023, BoWanYan2025, MatNobPer2025+}.
Furthermore, by letting the intervals between dividend-payment opportunities shrink in the periodic--classical setting, we study the behavior of the barriers of the optimal MMPCB strategies and use this approximation to examine the possible 
barrier-type structure of an optimal strategy in the classical--classical setting. 
This also yields a result that includes the results in
\cite{BayKypYam2013, Nob2021, MatNobPerYam2024, YanWan2026}
and the results in the classical--classical setting of \cite{AvrPalPis2007},
except for the parts concerning representations in terms of scale functions. 
The method may also be useful beyond the present problem, for example in related optimal dividend problems (e.g., \cite{Loe2008, Nob2023}), non-dividend stochastic control problems of a similar form (e.g., \cite{Yam2017}), and L\'evy process control problems that admit a formulation within a general MAP framework.
\par
By adopting this approach, we can not only prove the optimality of specific strategies for general MAPs but also obtain several refinements of previous results. 
In contrast to prior studies, which focused on proving the optimality of specific strategies, we also provide, in step (ii) above, necessary and sufficient conditions for a strategy to be optimal.
In addition, previous studies for the case where $E$ is a finite set with at least two points did not provide an explicit expression for the optimal barrier of MMPCB strategies; rather, the optimal barrier was characterized only implicitly through approximations used in the proof.
In contrast, the present paper shows that, in the MAP setting, the optimal barrier can be characterized in terms of the Laplace transforms of suitable hitting times, in the same spirit as in the L\'evy process case, thereby extending the previous results. 
Furthermore, prior work on general L\'evy processes such as \cite{Nob2021, MatNobPerYam2024, MatNobPer2025+} 
relied on smoothness assumptions needed 
to apply the Meyer--It\^o formula. In this paper, however, we avoid using that formula and thus eliminate the need for such assumptions. 
 \par
The structure of this paper is as follows. In Section \ref{Sec002}, we introduce the MAP considered in this paper, and formulate the optimal dividend problems in the classical--classical and periodic--classical settings. 
We also define MMPCB strategies and related stochastic processes. 
In Section \ref{Optimality_pcb}, we discuss the optimal dividend problem in the periodic--classical setting and provide the necessary and sufficient conditions for a strategy to be optimal.
We also demonstrate the optimality of certain MMPCB strategies. 
Section \ref{Sec_approximation} examines two approximations of the classical--classical setting by periodic--classical settings and considers the possible form of optimal strategies in the classical--classical setting.
The Supplementary Material collects the proofs of several lemmas, propositions, and theorems whose arguments are either straightforward, similar to those given elsewhere in the paper, or rather technical and not essential for understanding the main flow of the paper.

\section{Preliminaries} \label{Sec002}
\subsection{Markov additive process}
Let $(E, \cB(E))$ be a Radon space (for the definition of Radon spaces, see, e.g., \cite[Appendix A2]{Sha1988}). 
We assume that 
 $(X, Y)=(\Omega, \cF, \cF_t, (X_t, Y_t), \theta_t, \bfP_{(x, y)})$ 
is an $\bR\times E$-valued right process without killing, and that $X$ has c\`adl\`ag paths (for the definition of right processes, see, e.g., \cite[p.38]{Sha1988}). 
Here, $\{X_t:t\geq0\}$ takes values in $\bR$ and $\{Y_t:t\geq0\}$ takes values in $E$. 
We assume that $(X, Y)$ satisfies \eqref{200_01} for any non-negative measurable function $f$ on $\bR\times E$ and $t\geq 0$, where $\bfE_{(x, y)}$ denotes the expectation under $\bfP_{(x, y)}$ for $(x, y)\in\bR\times E$. 
Then, $(X, Y)$ is called a Markov additive process (MAP). 
Note that $Y=(\Omega, \cF, \cF_t,  Y_t, \theta_t, \bfP^{Y,(x)}_{y})$, where, for fixed $x \in \bR$, $\bfP^{Y, (x)}_y:=\bfP_{(x,y)}$, is also a right process. The law of this process does not depend on the choice of $x\in\mathbb{R}$, 
so when we focus only on $Y$, we omit the superscript $(x)$ in $\bfP^{Y,(x)}_{y}$ and simply write $\bfP^{Y}_{y}$. 
We write $\bfE^Y_y$ for the expectation with respect to $\bfP^Y_y$.
\par
Throughout this paper, we impose the following assumption.
\begin{align}
M:=\sup_{y\in E} \bfE_{(0, y)}\sbra{\sup_{s \in [0, 1]}|X_s| }<\infty. 
\label{finite_ass}
\end{align}
We use this assumption to show the admissibility of certain strategies (see Lemma \ref{Lem304}). 
In the case of L\'evy processes, it is straightforward to verify that this is equivalent to \cite[Assumption 2.1]{Nob2021} (see also \cite[Assumption 2.1]{MatNobPer2025+}), using an argument similar to the latter part of the proof of \cite[Lemma 3.2]{Nob2021}.
\par
We adopt the following conventions. Any term involving a discount factor corresponding to $e^{-\infty}$ is understood to be zero, even if the factor multiplied by this discount factor is not defined on the event under consideration. Moreover, expressions of the form $\infty+\cdot\circ\theta_\infty$ are interpreted as $\infty$.

\subsection{Optimal dividend problem with capital injections}
\label{SubsubSec221}
We fix a measurable function $\bfq$ from $E$ to $(q, \infty)$ with $q>0$ and a constant $\beta>1$. 
We denote $\frq(t)= \int_0^t\bfq(Y_s) \diff s$ for $t\geq 0$. 

\subsubsection{Classical--classical setting}\label{CCcase}
We define a strategy as any $\bR^2$-valued process $\pi=\{ (L^\pi_t,R^\pi_t): t\geq 0 \}$ 
that satisfies the following conditions.  
\begin{enumerate}
\item The maps $t\mapsto L^\pi_t$ and $t\mapsto R^\pi_t$ are non-negative, non-decreasing and c\`adl\`ag. 
\item For $t\geq 0$, $L^\pi_t$ and $R^\pi_t$ are $\cF_t$-measurable. 
\item $U^\pi_t \geq 0$ for $t\geq 0$, where $U^\pi_t := X_t-L^\pi_t+ R^\pi_t$ for all $(x,y)\in \bR \times E$, $\bfP_{(x, y)}$-a.s. 
\end{enumerate}
In particular, the strategy $\pi$ satisfying 
\begin{align}
\bfE_{(x, y)}\sbra{\int_{[0, \infty)}e^{-\frq(t)} \diff R^\pi_t}<\infty,
\qquad (x, y)\in\bR\times E, 
\label{adcon}
\end{align} 
is called an admissible strategy. 
We denote by $\Pi$ the set of all admissible strategies. 
For a strategy $\pi\in\Pi$, the expected NPV of dividends and capital injections of $\pi$ is defined, for $ (x, y )\in\bR\times E$, by
\begin{align}
v_\pi(x, y):=v^L_\pi(x, y)-\beta v^R_\pi(x, y), \label{NPV}
\end{align}
where 
$v^L_\pi(x, y):=\bfE_{(x, y)}\sbra{\int_{[0,\infty)}e^{-\frq(t)} \diff L^\pi_t}$, and 
$v^R_\pi(x, y):=\bfE_{(x, y)}\sbra{\int_{[0,\infty)}e^{-\frq(t)} \diff R^\pi_t}$. 
Our objective is to identify a strategy $\pi^{\ast}\in\Pi$ that satisfies
\begin{align}
v_{\pi^{\ast}}(x, y)=V(x, y):=\sup_{\pi\in\Pi} v_\pi(x, y),
\quad (x, y )\in\bR\times E. \label{opccb}
\end{align}
The function $V$ is called the value function of $\Pi$ and 
a strategy $\pi^{\ast}$ satisfying \eqref{opccb} is called an optimal strategy in $\Pi$. 
In this paper, we use this setting only to infer the possible form of optimal strategies through approximation.

\subsubsection{Periodic--classical setting} \label{PCcase}
Let $\nu$ be a probability distribution on $(0, \infty)$. 
Let $\bfr:=\{r_k: k \in \bN\}$ be a sequence of random variables defined on $(\Omega, \cF)$, which are independent and identically distributed with common distribution $\nu$, and are independent of $\{(X_t, Y_t):t\geq 0\}$ under each $\bfP_{(x, y)}$ with $(x, y) \in \bR \times E$. 

We define the process $N^\nu:=\{N^\nu_t : t\geq 0\}$ as
$N^\nu_t := \max\cbra{k \in \bN: \sum_{m=1}^kr_m\leq t}$ for $t\geq 0$, where $\max \varnothing =0$. 
Since the common distribution of the $r_k$'s is fixed as $\nu $, we will simply use $\bfP$ and $\bfE$ when considering probabilities or expectations involving only $\bfr$.
\par
We shall assume that $N^\nu_t$ is $\cF_t$-measurable for $t\geq 0$, $T^\nu_n$ is a stopping time and $\{N^\nu_{t+T^\nu_n}-N^\nu_{T^\nu_n}:t\geq 0\}$ is independent of $\cF_{T^\nu_n}$ for $n\in\bN$, where $T^\nu_n:=\sum_{k=1}^n r_k$. 
Indeed, separately from the probability space $(\Omega,\cF,\bfP_{(x,y)})$ on which $(X,Y)$ is defined, 
we may consider another probability space $(\Omega^\prime,\cF^\prime,\bfP^\prime)$ on which 
$\bfr$ is defined, and then take the product space. 
We may then relabel $\Omega\times\Omega^\prime$ as $\Omega$, the universal completion of $\cF\otimes\cF^\prime$ as $\cF$, 
$(\omega, \omega^\prime)\mapsto (\theta_t \omega, \omega^\prime)$ as $\theta_t$ for $t\geq 0$ and $(\omega,\omega^\prime)\in\Omega\times\Omega^\prime$,
and $\bfP_{(x,y)}\otimes\bfP^\prime$ as $\bfP_{(x,y)}$ for $(x, y)\in \bR\times E$. 
Moreover, we denote by $\cF^\prime_t$ the $\sigma$-field generated by $\cF_t$, $\{N^\nu_s:s\in[0,t]\}$ and $\cN (\cF\otimes\cF^\prime)$ (for the definition of $\cN$, see \cite[p.19]{Sha1988}) and 
relabel $\cF^\prime_{t+}$ as $\cF_t$ for $t\geq 0$. 
\par
We emphasize here that a property analogous to the Markov property holds: for $(x, y)\in\bR\times E$, $ n \in \bN $, $A\in\cF_{T^\nu_n}$ and for all non-negative measurable functions $ f $, we have 
\begin{align}
\bfE_{(x, y)}&\sbra{f(\{(X_{t+T^\nu_n}, Y_{t+T^\nu_n}, N^\nu_{t+T^\nu_n}-n):t\geq 0\}
)
; A}\\
&\qquad =\bfE_{(x, y)}\sbra{\bfE_{(X_{T^\nu_n}, Y_{T^\nu_n})}\sbra{ f(\{(X_t, Y_t,N^\nu_t):t\geq 0\})};A},
\label{periodicMarkov}
\end{align}
Hereafter, we write $T^{\nu}$ for $T^{\nu}_1(=r_1)$, and set $T^{\nu}_0=0$ and $T^{\nu}_\infty=\infty$.
\par
In the periodic--classical setting, a strategy is any $\bR^2$-valued process $\pi=\{ (L^{\pi}_t,R^{\pi}_t): t\geq 0 \}$ that satisfies conditions (i)--(iii) in Section \ref{SubsubSec221} 
and the following additional condition. 
\begin{itemize}
\item[(iv)] There exists a non-negative progressively measurable process $\ell^{\pi}:=\{\ell^{\pi}_t : t\geq 0\}$ such that 
$L^{\pi}_t = \int_{[0,t]}\ell^{\pi}_s \diff N^\nu_s$ for $ t\geq 0$,
for all $(x,y)\in \bR \times E$, $\bfP_{(x, y)}$-a.s.
\end{itemize} 
As in the classical--classical setting, 
a strategy $\pi$ satisfying \eqref{adcon}
is called an admissible strategy. 
We denote by $\Pi^\nu$ the set of all admissible strategies. 
For a strategy $\pi\in\Pi^\nu$, the expected NPV of dividends and capital injections of $\pi$ is defined by \eqref{NPV}. 
As in the classical--classical setting, 
our objective is to identify a strategy $\pi^{\nu,\ast}\in\Pi^\nu$ that satisfies
\begin{align}
v_{\pi^{\nu,\ast}}(x, y)=V^{\nu}(x, y):=\sup_{\pi\in\Pi^\nu} v_\pi(x, y),
\quad (x, y )\in\bR\times E. \label{oppcb}
\end{align}
The function $V^{\nu}$ is called the value function of $\Pi^\nu$ and 
a strategy $\pi^{\nu,\ast}$ satisfying \eqref{oppcb} is called an optimal strategy in $\Pi^\nu$.

\subsection{Markov-modulated periodic reflection}
\label{SubSec203}

In this section, we define processes and strategies constructed by Markov-modulated periodic reflection. 
In the context of the optimal dividend problem in the periodic--classical setting,
we will show that these strategies are indeed optimal. 
\par
We consider the problem under the same framework as in Section \ref{PCcase}. 
Let $\bfb$ be a measurable function on $E$. 
We define the Markov-modulated periodically reflected Markov additive process as 
\begin{align}
Z^\bfb_t := X_t - \rbra{\max_{T^\nu_k\in (0, t]} (X_{T^\nu_k}-\bfb(Y_{T^\nu_k}))\lor 0} , 
\qquad t\geq 0, 
\end{align}
where $\max\varnothing = 0$. 
We also define the Markov-modulated periodic--classical barrier (MMPCB) strategy $\pi^\nu_\bfb$ with barrier $\bfb$ by inductively applying the following operation ($\star$). 
\par
Set $\eta=0$, $\chi=\bfb(Y_0)$ and $L^{\pi^\nu_\bfb}_{0-}=R^{\pi^\nu_\bfb}_{0-}=0$. 
\begin{itemize}
\item[($\star$)] We define the process $\{Z_t : t\geq 0\}$ as 
\begin{align}
Z_t := 
\begin{cases}
X_t ,\qquad &\text{ if }\eta =0,
\\
\bfb(Y_\eta)+X_t -X_\eta, &\text{ if } \eta>0, 
\end{cases}
\end{align}  
for $t\geq 0$. 
We define the process $Z^0:=\{Z^0_t : t\geq \eta\}$ as 
$Z^0_t:= Z_t - \rbra{\inf_{s\in[\eta, t]} Z_s\land 0}$ for $t\geq \eta$. 
We also set $\eta^\prime:=\min\{T^\nu_k : T^\nu_k>\eta, Z^0_{T^\nu_k}>\bfb(Y_{T^\nu_k})\}$, where $\min\varnothing=\infty$. 
We set, for $t \in[\eta, \eta^\prime)$, 
\begin{align}
L^{\pi^\nu_\bfb}_t = L^{\pi^\nu_\bfb}_{\eta-}+ (\chi-\bfb(Y_\eta))\lor0
,\quad R^{\pi^\nu_\bfb}_t =R^{\pi^\nu_\bfb}_{\eta-}
- \rbra{\inf_{s\in[\eta, t]} Z_s\land0}.
\end{align}
If $\eta^\prime=\infty$, then the construction stops here. 
If $\eta^\prime<\infty$, we reset $\eta$ to $\eta^\prime$ and $\chi$ to $Z^0_{\eta^\prime}$, and then proceed to the next step. 
\end{itemize}
We denote the resulting controlled process by $U^{\pi^\nu_\bfb}:=\{U^{\pi^\nu_\bfb}_t: t\geq 0\}$. 
We denote by $\Pi^\nu_\pcb$ the set of MMPCB strategies $\pi^\nu_\bfb$ such that $\bfb$ is non-negative.

\section{Optimal strategies in the periodic--classical setting}\label{Optimality_pcb}
In this section, we consider the optimal dividend problem in the setting described in Section \ref{PCcase}. We fix the distribution $\nu$ and find optimal strategies in $\Pi^\nu$ satisfying \eqref{oppcb}. 
We will first summarize the main results of this section below.  
\par
We define the hitting times $\tau^-_0$ and $\tau^-_{0+}$ of $(X, Y)$ as  
\begin{align}
\tau^-_0:= \inf\{t>0 : X_t < 0\},
\qquad\tau^-_{0+}:= \inf\{t\geq 0 : X_t \leq 0\},
\end{align} 
where $\inf\varnothing=\infty$. 
For a measurable function $\bfb$, we define hitting times as  
\begin{align}
T^{\nu,+}_\bfb:=&\min\{T^\nu_k>0: X_{T^\nu_k}>\bfb{(Y_{T^\nu_k})} \}, 
\qquad 
T^{\nu,+}_{\bfb-}:=\min\{T^\nu_k>0: X_{T^\nu_k}\geq \bfb{(Y_{T^\nu_k})} \}, 
\end{align}
where $\min\varnothing =\infty$. 
We define, for a non-negative measurable function $\bfb$ on $E$ and $y\in E$, 
\begin{align}
\rho^{\nu,1}_{\bfb}(y)&:=\bfE_{(\bfb(y),y)} \sbra{ e^{-\frq(T^{\nu,+}_{\bfb-})}; T^{\nu,+}_{\bfb-}\leq\tau^-_0}+\beta \bfE_{(\bfb(y),y)} \sbra{ e^{-\frq(\tau^-_0)}; \tau^-_0<T^{\nu,+}_{\bfb-}}, 
\\
\rho^{\nu,2}_{\bfb}(y)&:=\bfE_{(\bfb(y),y)} \sbra{ e^{-\frq(T^{\nu,+}_{\bfb})}; T^{\nu,+}_{\bfb}<\tau^-_{0+}}+\beta \bfE_{(\bfb(y),y)} \sbra{ e^{-\frq(\tau^-_{0+})}; \tau^-_{0+}< T^{\nu,+}_{\bfb}}, 
\end{align}
which are measurable functions on $E$. 
We define $\Xi^\nu$ as the set of all non-negative measurable functions $\bfb$ on $E$ satisfying $E=\hat{E}^\bfb$ where
$\hat{E}^\bfb:=\{y\in E: \rho^{\nu,1}_{\bfb}( y)\leq1,\ \rho^{\nu,2}_{\bfb}( y)\geq 1\}$.
$\Xi^\nu$ is used to characterize the barriers of optimal MMPCB strategies by inequalities. In previous studies, the values of optimal barriers have been expressed using Laplace transforms of various hitting times (e.g., \cite[p.81]{Nob2021}, \cite[p.181]{Nob2023} and \cite[(4.13)]{MatNobPer2025+}). $\Xi^\nu$ represents a generalization to the MAP case. The rationale is as follows. For certain L\'evy processes, such as spectrally negative L\'evy processes and meromorphic L\'evy processes, two-sided exit formulae are available and provide explicit expressions for the Laplace transforms of particular hitting times under suitable conditions; see, e.g., \cite[Chapter 8]{Kyp2014} and \cite{KuzKypPar2012}. These formulae can then be used to derive explicit expressions for the Laplace transforms of various hitting times. For MAPs, however, the explicit computation of such Laplace transforms remains a topic for future research. 
\begin{Rem}
In light of previous studies, including \cite[(4.13)]{MatNobPer2025+}, it seems reasonable to expect that the set of optimal barriers $\breve{\Xi}^\nu$ is the set of non-negative measurable functions $\bfb$ on $E$ satisfying
$\bfE_{(\bfb(y),y)} \sbra{ e^{-\frq(\kappa^{\nu,\bfb}_0)}}\leq1$ and 
$\bfE_{(\bfb(y),y)} \sbra{ e^{-\frq(\kappa^{\nu,\bfb}_{0+})}}\geq 1$ 
for $y\in E$,
where
\begin{align}
\kappa^{\nu,\bfb}_0:=\min\{t>0: Z^{\bfb}_t<0 \}, 
\qquad 
\kappa^{\nu,\bfb}_{0+}:=\min\{t\geq 0: Z^{\bfb}_t\leq 0 \}, 
\end{align}
with $\min\varnothing=\infty$.
In fact, $\Xi^\nu\subset\breve{\Xi}^\nu$ follows from Lemma \ref{LemB01}. However, apart from special cases, $\Xi^\nu\supset\breve{\Xi}^\nu$ is not obvious, and it is not verified in this paper.
\end{Rem}
$\Xi^\nu$ is characterized as follows. 
\begin{Thm}\label{Thm302a}
There exist unique finite-valued measurable functions $\un{\bfb}^\nu$ and $\bar{\bfb}^\nu$ on $E$ such that $\un{\bfb}^\nu,\bar{\bfb}^\nu\in \Xi^\nu$, 
and for every non-negative measurable function $\bfb$ on $E$, $\bfb$ belongs to $\Xi^\nu$ if and only if $\un{\bfb}^\nu( y)\leq\bfb(y)\leq\bar{\bfb}^\nu(y)$ for all $y\in E$. 
\end{Thm}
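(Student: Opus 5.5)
The plan is to characterize $\Xi^\nu$ through the derivative of a one-period continuation value, not through $\rho^{\nu,1}_\bfb,\rho^{\nu,2}_\bfb$ directly. Those two quantities depend on the whole function $\bfb$, because the later dividend times $T^{\nu,+}_{\bfb-}$ and $T^{\nu,+}_{\bfb}$ involve $\bfb(Y_{T^\nu_k})$. A pointwise monotonicity argument in $\bfb(y)$ is therefore not available, and I would instead go through the value function $V^\nu$ and dynamic programming between dividend opportunities.

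\emph{Step 1: continuation value.} Define
\[
W(x,y):=\sup_{\pi\in\Pi^\nu}\bfE_{(x,y)}\Bigl[\int_{(0,\infty)}e^{-\frq(t)}\diff L^\pi_t-\beta\int_{[0,\infty)}e^{-\frq(t)}\diff R^\pi_t\Bigr],
\]
which is the value when no dividend is paid at time $0$. By \eqref{periodicMarkov}, at every dividend opportunity $T^\nu_n$ with state $(x,y)$, the decision reduces to maximizing $u+W(x-u,y)$ over $u\in[0,x]$.

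\begin{itemize}
\item Concavity of $W(\cdot,y)$ should follow from the additive property \eqref{200_01}. Starting from capital $\lambda x_1+(1-\lambda)x_2$, the convex combination of the controls of strategies started at $x_1$ and $x_2$ (driven by the same $X-x$ path) is admissible, using condition (iii) and \eqref{adcon}. The objective is linear in the controls.
\item Lipschitz bounds: $W(x+h,y)-W(x,y)\le\beta h$, because a capital injection of size $h$ at time $0$ substitutes for the extra capital. Also $W(x+h,y)-W(x,y)\ge \bfE[e^{-\frq(T^\nu)}]h$, because the surplus can be paid at $T^\nu$.
\item Measurability of $W$ in $y$ would come from an approximation by countably many MMPCB strategies. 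I expect this to use Lemma \ref{Lem304} and assumption \eqref{finite_ass}.
\end{itemize}

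\emph{Step 2: candidate barriers.} Set
\[
\un{\bfb}^\nu(y):=\inf\{x\ge0:\partial^+_xW(x,y)\le1\},\qquad \bar{\bfb}^\nu(y):=\sup\{x\ge0:\partial^-_xW(x,y)\ge1\}\vee \un{\bfb}^\nu(y).
\]
Finiteness of $\bar{\bfb}^\nu$ comes from discounting. Capital $x$ above the barriers is only harvested at a dividend time at least $T^\nu$ later, so $\limsup_{x\to\infty}\partial_x W(x,y)\le \sup_y\bfE^Y_y[e^{-\frq(T^\nu)}]<1$, using $\bfq>q>0$. Concavity then shows that $u\mapsto u+W(x-u,y)$ is maximized exactly by pushing $x$ down to any level in $[\un{\bfb}^\nu(y),\bar{\bfb}^\nu(y)]$ when $x$ exceeds it. The verification (DP) argument then shows that $\pi^\nu_\bfb$ is optimal for every measurable $\bfb$ lying between these two functions. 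This requires a measurable-selection step, which is easy here because the barriers are explicit.

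\emph{Step 3: $\rho$ as one-sided derivatives.} Let $\bfb$ lie between the barriers. I would compute $\partial^\pm_x W(\bfb(y),y)$ by a coupling. Starting at $\bfb(y)\pm h$ with the same increments of $X$, the MMPCB paths differ by $h$ until one of two events:
\begin{itemize}
\item a dividend opportunity at which the perturbed path lies above the barrier, where the difference $h$ is paid out, contributing $e^{-\frq}\cdot 1$;
\item a time when the lower reflection at $0$ absorbs the difference, contributing $e^{-\frq}\cdot\beta$.
\end{itemize}
After either event the two paths coincide. Letting $h\downarrow0$ gives $\partial^+_x W=\rho^{\nu,1}_\bfb(y)$ with the non-strict conventions $T^{\nu,+}_{\bfb-}$ and $\tau^-_0$. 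It gives $\partial^-_xW=\rho^{\nu,2}_\bfb(y)$ with $T^{\nu,+}_{\bfb}$ and $\tau^-_{0+}$. Combining with Step 2, every $\bfb$ in the interval belongs to $\Xi^\nu$; in particular $\un{\bfb}^\nu,\bar{\bfb}^\nu\in\Xi^\nu$.

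\emph{Step 4: converse and uniqueness.} Conversely, let $\bfb\in\Xi^\nu$. The same coupling shows that $w_\bfb(x,y):=v_{\pi^\nu_\bfb}$ (continuation version) has right derivative $\rho^{\nu,1}_\bfb\le1$ and left derivative $\rho^{\nu,2}_\bfb\ge1$ at $\bfb(y)$. By the necessary and sufficient optimality conditions obtained through DP, this makes $\pi^\nu_\bfb$ optimal, so $w_\bfb=W$. Hence $\partial^+W(\bfb(y),y)\le1\le\partial^-W(\bfb(y),y)$, which forces $\un{\bfb}^\nu\le\bfb\le\bar{\bfb}^\nu$. Uniqueness of the pair is then immediate, since $\un{\bfb}^\nu$ and $\bar{\bfb}^\nu$ are the pointwise minimum and maximum of $\Xi^\nu$.

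I expect the main obstacle to be Step 3 and its use in Step 4. The derivative identification needs a careful limit: the hitting events $\{T^{\nu,+}_{\bfb}<\tau^-_{0+}\}$ versus $\{T^{\nu,+}_{\bfb-}\le\tau^-_0\}$ must be matched to the sign of $h$. In Step 4, local derivative conditions at the barrier must be upgraded to global concavity of $w_\bfb$ without any Itô-type formula. For that upgrade I would first try the DP between consecutive $T^\nu_n$ together with \eqref{200_01}.
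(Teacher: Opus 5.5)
\textbf{There is a genuine gap in your Step~4, the converse inclusion.}

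Your Steps 1--3 follow the paper's own route: concavity of the value function in $x$, barriers read off from its right derivative, a DP verification with a transversality estimate, and the coupling of Lemma~\ref{right_derivative_lemma} and \eqref{left_derivative} that identifies $\rho^{\nu,1}_\bfb(y)$ and $\rho^{\nu,2}_\bfb(y)$ as the one-sided derivatives of $v_{\pi^\nu_\bfb}$ at $\bfb(y)$. Modulo the smaller points at the end, they give $\{\bfb:\un{\bfb}^\nu\le\bfb\le\bar{\bfb}^\nu\}\subset\Xi^\nu$, as in part (1) of the paper's Step~3.

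The bounds $\partial^+_x w_\bfb(\bfb(y),y)\le1\le\partial^-_x w_\bfb(\bfb(y),y)$ are information at the single point $\bfb(y)$. Your DP optimality criterion is stated in terms of the derivatives of $W$, so it says nothing about $\pi^\nu_\bfb$ until you already know $w_\bfb=W$, which is the claim itself. What you need is that $x\mapsto w_\bfb(x,y)$ is concave. Then $\partial^+_xw_\bfb\ge1$ on all of $\{x<\bfb(y)\}$ and $\le1$ on $\{x>\bfb(y)\}$, so $\bfb$ lies in the barrier interval of $w_\bfb$ itself and $\cV^\nu w_\bfb=\cV^\nu_{\pi^\nu_\bfb}w_\bfb=w_\bfb$. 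A comparison along iterates of an optimal barrier strategy (Lemma~\ref{LemA01}) then gives $w_\bfb=V^\nu$.

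You name concavity as the obstacle but give no mechanism, and the one you suggest does not work as stated:
\begin{itemize}
\item By Lemma~\ref{right_derivative_lemma}, $\partial^+_xw_\bfb$ is a dividend term that is non-decreasing in $x$ plus an injection term that is non-increasing in $x$, so no monotonicity is visible.
\item The argument of Lemma~\ref{Lem306} shows that $\cV^\nu_{\pi^\nu_\bfb}f$ inherits concavity from $f$ only when $\bfb\le\bar{\bfb}_f$. That is a condition on the finite-horizon iterates, and $\bfb\in\Xi^\nu$, which concerns infinite-horizon quantities, does not provide it.
\end{itemize}

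The missing idea is the randomization of Lemmas~\ref{LemB01a} and~\ref{LemB01}. Because $\rho^{\nu,1}_\bfb(y)\le1\le\rho^{\nu,2}_\bfb(y)$, there is a measurable $p_\bfb(y)\in[0,1]$ with $(1-p_\bfb(y))\rho^{\nu,1}_\bfb(y)+p_\bfb(y)\rho^{\nu,2}_\bfb(y)=1$. Use it to randomize, independently of $(X,Y,N^\nu)$, between two conventions:
\begin{itemize}
\item that of $\rho^{\nu,1}_\bfb$, where an exact hit of the barrier at a dividend time counts as a payout;
\item that of $\rho^{\nu,2}_\bfb$, where an exact touch of $0$ counts as an injection.
\end{itemize}
Iterating with \eqref{periodicMarkov} gives $\varrho^{\nu,\bfb}_{p_\bfb}(\bfb(y),y)=1$. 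So the payoff $1$ collected at $T^{\nu,+}_{\bfb-}$ can be replaced by the marginal value continued from the barrier. Then $\partial^+_xw_\bfb(x,y)$ becomes the pure Laplace transform $\beta\bfE_{(x,y)}[e^{-\frq(\kappa^{\nu,\bfb}_{p_\bfb,0})}]$ of a single randomized injection time. A pathwise comparison of the controlled processes started from $x_1<x_2$, sharing the same $\Lambda^{p_\bfb}$, shows this time is non-decreasing in $x$. That gives concavity and both sign conditions at once.

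Two smaller points:
\begin{itemize}
\item Measurability of $W$ via ``countably many MMPCB strategies'' is not justified, since you do not know in advance which barrier functions to use. The paper gets measurability, concavity and the Bellman equation together from $\cV^{\nu,(n)}v_0\to V^\nu$, each iterate being the value of an explicit barrier strategy.
\item The bound $\limsup_{x\to\infty}\partial_xW(x,y)\le\bfE[e^{-qT^\nu}]<1$ does not follow from a one-period estimate. It comes from the derivative formula for the optimal strategy $\pi^\nu_{\un{\bfb}^\nu}$, so it belongs after Step~3, as in Lemma~\ref{Lem312}.
\end{itemize}
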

Let $\Pi^{\nu, \ast}$ be the set of strategies $\pi$ in $\Pi^\nu$ for which 
the following conditions hold $\bfP_{(x,y)}$-a.s. for all $(x, y)\in\bR \times E$. 
\begin{enumerate}
\item At each time $T^\nu_n$ with $n\in\bN$, the dividend payout is made as follows. 
\begin{enumerate} 
\item If $U^\pi_{(n-)}\leq\un{\bfb}^\nu(Y_{T^\nu_n})$, then no dividend is paid, where for $n\in\bN$, $U^\pi_{(n-)}:=U^\pi_{T^\nu_n-} + \Delta X_{T^\nu_n}$ denotes the value of the controlled process at time $T^\nu_n$ when the strategy $\pi$ is followed up to, but not including, $T^\nu_n$, and no payout is made at $T^\nu_n$. 
\item If $U^\pi_{(n-)}>\un{\bfb}^\nu(Y_{T^\nu_n})$, then dividends may be paid and $U^\pi_{T^\nu_n}$ belongs to $ [\un{\bfb}^\nu(Y_{T^\nu_n}), \bar{\bfb}^\nu(Y_{T^\nu_n})]$. 
\end{enumerate}
\item $\{R^\pi_t: t\geq 0\}$ satisfies 
$R^\pi_t= - \rbra{\inf_{s \in[0, t]}(X_s-L^\pi_s) \land 0}$ for $ t\geq 0$. 
\end{enumerate}
Here, we denote $\Delta Z_t:=Z_t-Z_{t-}$ for any process $\{Z_t: t\geq 0\}$ and $t\geq 0$. 
\begin{Thm}\label{newThm302}
The strategy $\pi\in\Pi^\nu$  is optimal if and only if it belongs to $\Pi^{\nu,\ast}$.
\end{Thm}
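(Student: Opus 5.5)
The proof proceeds in three stages, following the outline in the introduction: reduce the capital injection component, decompose the horizon at dividend dates and apply dynamic programming, and then read off the optimality conditions through $\rho^{\nu,1}_\bfb$ and $\rho^{\nu,2}_\bfb$ using Theorem \ref{Thm302a}.

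\textbf{Step 1: minimal injection.} Fix the dividend component $L^\pi$. Among all admissible $R$ keeping $U\geq 0$, the minimal one is $R^{\min}_t=-(\inf_{s\le t}(X_s-L^\pi_s)\land 0)$. It is dominated pathwise by every other admissible $R$, as a measure on $[0,t]$ for each $t$ in the sense of cumulative functions. Since $\beta>0$, integrating $e^{-\frq(t)}$ by parts shows $v^R$ is strictly smaller unless $R=R^{\min}$ a.s. The same dividend stream remains feasible because $U$ only becomes larger. Hence any optimal strategy satisfies condition (ii). Conversely, we may restrict attention to strategies with minimal injection, for which $v_\pi$ is a functional of $\ell^\pi$ alone.

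\textbf{Step 2: dynamic programming between dividend dates.} Define $w(x,y):=\sup v_\pi(x,y)$ over strategies with minimal injection. Let $h(x,y):=V^\nu(x,y)$. Using the renewal Markov property \eqref{periodicMarkov} and the additivity \eqref{200_01}, I would write the DP identity
\begin{align}
V^\nu(x,y)=\bfE_{(x,y)}\Big[-\beta\int_{[0,T^\nu]}e^{-\frq(t)}\diff R^0_t+e^{-\frq(T^\nu)}\sup_{0\le u\le U^0_{T^\nu}}\big(U^0_{T^\nu}-u+V^\nu(u,Y_{T^\nu})\big)\Big],
\end{align}
where $U^0$ is $X$ reflected at $0$ from below. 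Optimality of $\pi$ is then equivalent, as in a discrete-time verification argument, to the requirement that at each $T^\nu_n$ the post-payout level attains the inner supremum a.s. Transversality holds because $\bfq>q$ and by Lemma \ref{Lem304}-type bounds from \eqref{finite_ass}. The direction ``optimal $\Rightarrow$ attains the sup'' follows by modifying the strategy at the first date where the maximization fails, on a set of positive probability, and continuing optimally afterwards.

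\textbf{Step 3: identify the argmax set.} I must show that for each $y$ the function $u\mapsto -u+V^\nu(u,y)$ on $[0,x]$ is maximized exactly on $[\un{\bfb}^\nu(y),\bar{\bfb}^\nu(y)]\cap[0,x]$, and at $u=x$ when $x\le\un{\bfb}^\nu(y)$. This amounts to the slope of $V^\nu(\cdot,y)$ being $>1$ below $\un{\bfb}^\nu(y)$, equal to $1$ on the interval, and $<1$ above $\bar{\bfb}^\nu(y)$. I would first take some $\bfb\in\Xi^\nu$ (existence is guaranteed by Theorem \ref{Thm302a}). Then I compute right and left difference quotients of $v_{\pi^\nu_\bfb}(\cdot,y)$ at $\bfb(y)$ by coupling two starting points $x$ and $x+\varepsilon$. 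Under the additive structure, the paths differ by $\varepsilon$ until either a dividend date above the barrier occurs (where the difference pays out, giving a factor $e^{-\frq}$) or the lower path is reflected at $0$ (where the difference is paid through injection, giving a factor $\beta e^{-\frq}$). In the limit these difference quotients are exactly $\rho^{\nu,1}_\bfb(y)$ and $\rho^{\nu,2}_\bfb(y)$. The conditions $\rho^{\nu,1}\le1\le\rho^{\nu,2}$ therefore give concavity-type slope information. Combined with a verification through the DP identity, this shows $v_{\pi^\nu_\bfb}=V^\nu$. The extremal barriers $\un{\bfb}^\nu$ and $\bar{\bfb}^\nu$ then identify the flat region of slope $1$ through the uniqueness and sandwich characterization in Theorem \ref{Thm302a}.

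\textbf{Main obstacle.} The hardest point is Step 3: establishing the slope comparison globally in $x$ rather than only at the barrier. The key sub-claim is that $V^\nu(\cdot,y)$ is concave, which I would prove by the coupling argument, since differences of two coupled reflected paths are monotone. Given concavity, the slope is $>1$ strictly below $\un{\bfb}^\nu(y)$, by minimality of $\un{\bfb}^\nu$ (a strictly smaller barrier would lie in $\Xi^\nu$ otherwise), and $<1$ strictly above $\bar{\bfb}^\nu(y)$ by the symmetric argument.
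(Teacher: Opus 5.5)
Your Steps 1--2 and your identification of $\rho^{\nu,1}_\bfb(y)$ and $\rho^{\nu,2}_\bfb(y)$ as the one-sided slopes of $v_{\pi^\nu_\bfb}(\cdot,y)$ at $\bfb(y)$ match the paper. Together with concavity of $V^\nu$ and Theorem \ref{Thm302a}, they give the ``only if'' half. An optimal barrier has slopes $\le 1$ on the right and $\ge 1$ on the left, so it lies in $\Xi^\nu$. Hence $\un{\bfb}^\nu\le\un{\bfb}_{V^\nu}\le\bar{\bfb}_{V^\nu}\le\bar{\bfb}^\nu$ and $\Pi^{\nu,\ast}_{V^\nu}\subset\Pi^{\nu,\ast}$. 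That is all your last paragraph proves.

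The ``if'' half needs the reverse inclusions: $V^\nu(\cdot,y)$ must have slope exactly $1$ on all of $[\un{\bfb}^\nu(y),\bar{\bfb}^\nu(y)]$, equivalently $v_{\pi^\nu_\bfb}=V^\nu$ for every $\bfb\in\Xi^\nu$. Otherwise a strategy in $\Pi^{\nu,\ast}$ paying down to $\un{\bfb}^\nu(y)<\un{\bfb}_{V^\nu}(y)$ is not shown optimal. Your justification (local slopes at $\bfb(y)$ ``combined with a verification through the DP identity'') does not go through. Verification needs $v_{\pi^\nu_\bfb}$ to be a fixed point of the Bellman operator: $u\mapsto v_{\pi^\nu_\bfb}(u,y)-u$ must be non-decreasing on $[0,\bfb(y)]$ and non-increasing on $[\bfb(y),\infty)$, and the comparison with $V^\nu$ also needs $v_{\pi^\nu_\bfb}\in\Gamma^\nu$. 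This is global information that two slopes at one point cannot give.

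Concavity of $V^\nu$ is irrelevant before you know $v_{\pi^\nu_\bfb}=V^\nu$, and concavity of $v_{\pi^\nu_\bfb}$ is not automatic. Its right derivative $\bfE_{(x,y)}[e^{-\frq(T^{\nu,+}_{\bfb-})};T^{\nu,+}_{\bfb-}\le\tau^-_0]+\beta\bfE_{(x,y)}[e^{-\frq(\tau^-_0)};\tau^-_0<T^{\nu,+}_{\bfb-}]$ has two competing terms and is not monotone for a general barrier. For example, take $X_t=x+t$, $\bfq\equiv c$, $\nu$ exponential with rate $r$ and $b>0$: on $[0,b)$ the derivative equals $e^{-c(b-x)}r/(r+c)$, which is increasing. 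Taking Theorem \ref{Thm302a} as given does not help either, since it says nothing about $V^\nu$.

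The missing idea is the paper's randomization (Lemmas \ref{LemB01} and \ref{LemB01a}). Because $\rho^{\nu,1}_\bfb(y)\le1\le\rho^{\nu,2}_\bfb(y)$, one can choose $p_\bfb(y)\in[0,1]$ with $(1-p_\bfb(y))\rho^{\nu,1}_\bfb(y)+p_\bfb(y)\rho^{\nu,2}_\bfb(y)=1$. Iterating the Markov property at the successive times $\cT^{n}_{p_\bfb,\bfb}$ then gives $\varrho^{\nu,\bfb}_{p_\bfb}(\bfb(y),y)=1$. Substituting this at $T^{\nu,+}_{\bfb-}$ turns the derivative into a single discounted ruin-type transform, $v^\prime_{\pi^\nu_\bfb+}(x,y)=\rho^{\nu,\bfb}_{p_\bfb}(x,y)=\beta\bfE_{(x,y)}[e^{-\frq(\kappa^{\nu,\bfb}_{p_\bfb,0})}]$. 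A pathwise coupling shows this is non-increasing in $x$. That yields $v_{\pi^\nu_\bfb}\in\Gamma^\nu$ with $\bfb(y)\in[\un{\bfb}_{v_{\pi^\nu_\bfb}}(y),\bar{\bfb}_{v_{\pi^\nu_\bfb}}(y)]$. Only then does verification (Lemma \ref{LemA01}: iterate $\cV^\nu_{\pi^\nu_{\un{\bfb}_{V^\nu}}}$ and use the transversality bound) give $v_{\pi^\nu_\bfb}=V^\nu$.

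There are also three smaller points. First, writing the Bellman identity for $V^\nu$ directly presupposes measurability of $V^\nu$ and a measurable pasting of $\varepsilon$-optimal continuations at $T^\nu$; the paper avoids this by value iteration, $\cV^{\nu,(n)}v_0\to V^\nu$. Second, concavity of $V^\nu$ does not follow from coupling two starting points under a fixed strategy; it comes from $\cV^\nu$ preserving $\Gamma^\nu$ (Lemma \ref{Lem306}) and passing to the limit. Third, Step 1 should also discard dividends exceeding $(U^\pi_{t-}+\Delta X_t)\lor 0$, which are financed by injections at cost $\beta>1$ (the second reduction in Lemma \ref{Lem304a}).
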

Let $\hat{\Xi}^\nu$ be the set of non-negative measurable functions $\bfb$ on $E$ satisfying $m_y (E\backslash E^\bfb)=0$ for all $y\in E$, where 
$E^\bfb:=\cbra{y \in E: \un{\bfb}^\nu( y)\leq\bfb(y)\leq\bar{\bfb}^\nu(y)}$, and
$m_y (B ):= \bfP^{Y}_y (Y_{T^\nu}\in B)$ for $B\in\cB(E)$. 
Note that $\hat{\Xi}^\nu$ can be characterized as follows.
\begin{Prop}\label{Prop303}
A measurable function $\bfb$ satisfies $\bfb\in\hat{\Xi}^\nu$ if and only if 
$m_y (E\backslash \hat{E}^\bfb)=0$ for all $y \in E$. 
\end{Prop}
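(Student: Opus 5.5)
The plan is to exploit the fact that $\rho^{\nu,1}_{\bfb}(y)$ and $\rho^{\nu,2}_{\bfb}(y)$ depend on $\bfb$ in two ways only: through the starting level $\bfb(y)$, and through the values $\bfb(Y_{T^\nu_k})$, $k\geq1$, used in $T^{\nu,+}_{\bfb}$ and $T^{\nu,+}_{\bfb-}$.

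\emph{Step 1 (invariance lemma).} If $N\in\cB(E)$ satisfies $m_{y}(N)=0$ for every $y\in E$, then $\bfP_{(x,y)}(Y_{T^\nu_k}\in N \text{ for some } k\geq1)=0$ for all $(x,y)$. For $k=1$ this is the definition of $m_y$. For $k\geq2$ we condition on $\cF_{T^\nu_{k-1}}$ and apply \eqref{periodicMarkov}; the conditional probability is then $m_{Y_{T^\nu_{k-1}}}(N)=0$. Consequently, if $\bfb_1=\bfb_2$ off such an $N$, then $T^{\nu,+}_{\bfb_1}=T^{\nu,+}_{\bfb_2}$ and $T^{\nu,+}_{\bfb_1-}=T^{\nu,+}_{\bfb_2-}$ a.s. If in addition $\bfb_1(y)=\bfb_2(y)$, this gives $\rho^{\nu,i}_{\bfb_1}(y)=\rho^{\nu,i}_{\bfb_2}(y)$ for $i=1,2$, because the starting point is then also the same. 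Along the way we check that $E^{\bfb}$ and $\hat E^{\bfb}$ are measurable, which follows from the measurability of $\un{\bfb}^\nu$, $\bar{\bfb}^\nu$ and $\rho^{\nu,i}_{\bfb}$.

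\emph{Step 2 ($\Rightarrow$).} Let $\bfb\in\hat\Xi^\nu$ and $N:=E\backslash E^{\bfb}$, which is $m_y$-null for every $y$. Define $\bfb':=\bfb$ on $E^{\bfb}$ and $\bfb':=\un{\bfb}^\nu$ on $N$. Then $\un{\bfb}^\nu\leq\bfb'\leq\bar{\bfb}^\nu$ on all of $E$, so Theorem \ref{Thm302a} gives $\bfb'\in\Xi^\nu$, that is, $\hat E^{\bfb'}=E$. For $y\in E^{\bfb}$, Step 1 gives $\rho^{\nu,i}_{\bfb}(y)=\rho^{\nu,i}_{\bfb'}(y)$, hence $y\in\hat E^{\bfb}$. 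Thus $E\backslash\hat E^{\bfb}\subset N$, which is $m_y$-null for every $y$.

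\emph{Step 3 ($\Leftarrow$), the main obstacle.} Now assume $N:=E\backslash\hat E^{\bfb}$ is $m_y$-null for every $y$; we need $\un{\bfb}^\nu\leq\bfb\leq\bar{\bfb}^\nu$ $m_y$-a.e. The naive modification is no longer enough. Replacing $\bfb$ by $\un{\bfb}^\nu$ on $N$ does preserve the inequalities at points of $E\backslash N$, by Step 1. At $y\in N$, however, the quantity $\rho^{\nu,i}_{\bfb'}(y)$ uses the start $\un{\bfb}^\nu(y)$ but the future barrier $\bfb$, not $\un{\bfb}^\nu$, so Theorem \ref{Thm302a} cannot be applied directly.

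I would first try to fix the values on $N$ by a one-point argument. For fixed $y\in N$, consider the functions $a\mapsto \rho^{\nu,i}(a,y)$, defined as in $\rho^{\nu,i}_{\bfb}(y)$ but with starting level $a$ and future barrier $\bfb$. The aim is to show, by monotonicity in $a$ in the same way as in the proof of Theorem \ref{Thm302a}, that some $a(y)\geq 0$ satisfies $\rho^{\nu,1}\leq1\leq\rho^{\nu,2}$, and that $a(y)$ can be chosen measurably. Setting $\bfb'(y):=a(y)$ on $N$ and $\bfb':=\bfb$ off $N$ then gives $\bfb'\in\Xi^\nu$ by Step 1. Theorem \ref{Thm302a} then yields $\un{\bfb}^\nu\leq\bfb'\leq\bar{\bfb}^\nu$, so $\bfb=\bfb'$ lies in the interval off $N$, which is exactly $\bfb\in\hat\Xi^\nu$.

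If this one-point existence is not available from the proof of Theorem \ref{Thm302a}, the fallback is to pass through optimality. By Theorem \ref{newThm302}, membership $\pi^\nu_{\bfb}\in\Pi^{\nu,\ast}$ amounts to $\bfb(Y_{T^\nu_n})\in[\un{\bfb}^\nu,\bar{\bfb}^\nu](Y_{T^\nu_n})$ a.s.\ for all $n$, which is the $\hat\Xi^\nu$ condition. The task would then be to show, via the dynamic-programming argument between dividend dates, that the same optimality is equivalent to $\rho^{\nu,1}_{\bfb}\leq 1\leq\rho^{\nu,2}_{\bfb}$ at the points $Y_{T^\nu_n}$ a.s.
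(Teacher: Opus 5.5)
Your Steps 1 and 2 are correct and coincide with part (1) of the paper's proof. The reduction in your Step 3 is also exactly the paper's part (2). Write $\rho^{\nu,i}_{\bfb}(x,y)$ for the quantity started at level $x$ with future barrier $\bfb$. The reduction redefines $\bfb$ on $N=E\backslash\hat{E}^{\bfb}$ by a level $a(y)\geq 0$ with $\rho^{\nu,1}_{\bfb}(a(y),y)\leq 1\leq \rho^{\nu,2}_{\bfb}(a(y),y)$, and then uses Step 1 and Theorem \ref{Thm302a}.

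The gap is that you never establish that such an $a(y)$ exists, and this is the whole content of this direction. The mechanism you point to does not deliver it.

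Monotonicity of $x\mapsto\rho^{\nu,i}_{\bfb}(x,y)$ is not a general fact. By Lemma \ref{right_derivative_lemma}, $\rho^{\nu,1}_{\bfb}(\cdot,y)$ is the right derivative of $v_{\pi^\nu_{\bfb}}(\cdot,y)$, and that function need not be concave. Take a single-point $E$, a Brownian $X$ and a large constant barrier. Then $\rho^{\nu,1}_{\bfb}(x,y)$ is close to $\beta$ for small $x>0$, small in the middle of $(0,\bfb)$, and tends to $\bfE^Y_y[e^{-\frq(T^\nu)}]$ as $x\to\infty$.

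The proof of Theorem \ref{Thm302a} does not supply the monotonicity either. It goes through $V^\nu$, and the monotonicity of $\rho^{\nu,1}_{\bfb}(\cdot,y)$ it obtains (via Lemmas \ref{LemB01a} and \ref{LemB01}) requires $\bfb\in\Xi^\nu$, which is what you are trying to produce. Moreover, monotonicity alone would not give a level at which the two functions straddle $1$. Your fallback through optimality only restates the claim.

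The missing idea is a crossing argument that needs no monotonicity, only three facts:
\begin{itemize}
\item $\rho^{\nu,2}_{\bfb}(0,y)=\beta>1$, since $\tau^-_{0+}=0<T^{\nu,+}_{\bfb}$ under $\bfP_{(0,y)}$;
\item $\lim_{x\to\infty}\rho^{\nu,2}_{\bfb}(x,y)=\bfE^Y_y[e^{-\frq(T^\nu)}]<1$, by \eqref{200_01} and dominated convergence;
\item $x\mapsto\rho^{\nu,2}_{\bfb}(x,y)$ is left-continuous with $\lim_{z\downarrow x}\rho^{\nu,2}_{\bfb}(z,y)=\rho^{\nu,1}_{\bfb}(x,y)$, which is \eqref{Prop303_003}.
\end{itemize}
The paper takes $a(y)=\inf\{x\geq 0:\rho^{\nu,2}_{\bfb}(x,y)\leq 1\}$, which is finite by the second fact.

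The last up-crossing, $a(y)=\sup\{x\geq 0:\rho^{\nu,2}_{\bfb}(x,y)\geq 1\}$, makes both inequalities immediate. It is finite because this set contains $0$ and no large $x$. Every $z>a(y)$ has $\rho^{\nu,2}_{\bfb}(z,y)<1$, so the right-limit relation gives $\rho^{\nu,1}_{\bfb}(a(y),y)\leq 1$. Left-continuity gives $\rho^{\nu,2}_{\bfb}(a(y),y)\geq 1$.

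With this level inserted, your Step 3 goes through as written. The measurability of $y\mapsto a(y)$, which you rightly flag, is not discussed in the paper either.
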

For the proof of Proposition \ref{Prop303}, see Appendix \ref{Prf_Prop303} of the Supplementary Material. 
For $\pi^\nu_\bfb \in\Pi^\nu_\pcb$, it is immediately clear that $\bfb\in\hat{\Xi}^\nu$ is equivalent to $\pi^\nu_\bfb \in\Pi^{\nu,\ast}$. 
Therefore, by Theorem \ref{newThm302}, the following corollary follows. 
\begin{Cor}\label{Thm302}
For any non-negative measurable function $\bfb$ on $E$, the MMPCB strategy $\pi^\nu_\bfb$ is optimal if and only if $\bfb\in\hat{\Xi}^\nu$. 
\end{Cor}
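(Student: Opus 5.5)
The corollary follows from Theorem \ref{newThm302} once we check that, for a non-negative measurable $\bfb$, the MMPCB strategy $\pi^\nu_\bfb$ lies in $\Pi^\nu$ and satisfies conditions (i)--(ii) defining $\Pi^{\nu,\ast}$ exactly when $\bfb\in\hat{\Xi}^\nu$. The plan has three steps: prove admissibility, verify condition (ii) directly from the construction, and show that condition (i) is equivalent to $m_y(E\backslash E^\bfb)=0$ for all $y$.

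\emph{Admissibility.} We have $\pi^\nu_\bfb\in\Pi^\nu$ by Lemma \ref{Lem304}, which is where assumption \eqref{finite_ass} enters. Conditions (i)--(iv) are immediate from the construction ($\star$), since dividends are paid only at the times $T^\nu_k$.

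\emph{Condition (ii).} On each interval $[\eta,\eta^\prime)$, the process $R^{\pi^\nu_\bfb}$ is the classical reflection at $0$ of $X-L^{\pi^\nu_\bfb}$. Concatenating these intervals gives $R^{\pi^\nu_\bfb}_t=-(\inf_{s\le t}(X_s-L^{\pi^\nu_\bfb}_s)\land0)$. The reason is that the running infimum at each restart time is already compensated by the past injections, since $U\ge0$ there.

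\emph{Condition (i).} At $T^\nu_n$ the pre-payout value is $U_{(n-)}=Z^0_{T^\nu_n}$. The strategy pays $(U_{(n-)}-\bfb(Y_{T^\nu_n}))\lor0$, so the post-payout value is $U_{(n-)}\land\bfb(Y_{T^\nu_n})$.

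For sufficiency, assume $m_y(E\backslash E^\bfb)=0$ for all $y$. By the strong Markov property \eqref{periodicMarkov} applied at $T^\nu_{n-1}$, the variable $Y_{T^\nu_n}$ has conditional law $m_{Y_{T^\nu_{n-1}}}$. Hence $Y_{T^\nu_n}\in E^\bfb$ a.s. for every $n$. On this event:
\begin{itemize}
\item if $U_{(n-)}\le\un{\bfb}^\nu\le\bfb$, nothing is paid, so (i)(a) holds;
\item if $U_{(n-)}>\un{\bfb}^\nu$, the post-payout value $U_{(n-)}\land\bfb$ lies in $[\un{\bfb}^\nu,\bar{\bfb}^\nu]$, so (i)(b) holds.
\end{itemize}

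For necessity, suppose $m_{y_0}(B)>0$ for some $y_0$, where $B=E\backslash E^\bfb$. Write $B=B_1\cup B_2$ with $B_1=\{\bfb<\un{\bfb}^\nu\}$ and $B_2=\{\bfb>\bar{\bfb}^\nu\}$, and start the process at $(x,y_0)$ with $x$ large.
\begin{itemize}
\item On $\{Y_{T^\nu}\in B_1\}$, consider the event $\{\bfb(Y_{T^\nu})<X_{T^\nu}\}$. On it a dividend is paid down to $\bfb<\un{\bfb}^\nu$, which violates (i)(b).
\item On $\{Y_{T^\nu}\in B_2\}$, consider the event $\{X_{T^\nu}>\bfb(Y_{T^\nu})\}$. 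On it the post-payout value is $\bfb>\bar{\bfb}^\nu$, which again violates (i)(b).
\end{itemize}
Each of these events must have positive probability for some $x$. By the additive property \eqref{200_01}, $\bfP_{(x,y_0)}(X_{T^\nu}>c(Y_{T^\nu}),\,Y_{T^\nu}\in B_i)$ equals $\bfP_{(0,y_0)}(X_{T^\nu}+x>c(Y_{T^\nu}),\,Y_{T^\nu}\in B_i)$, and this tends to $\bfP^Y_{y_0}(Y_{T^\nu}\in B_i)$ as $x\to\infty$ because $\bfb$ is finite-valued. Therefore $\pi^\nu_\bfb\notin\Pi^{\nu,\ast}$.

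The only delicate point is that the definition of $\Pi^{\nu,\ast}$ requires the conditions $\bfP_{(x,y)}$-a.s. for \emph{all} starting points, so a single violating point $(x,y_0)$ suffices for necessity. For sufficiency we need the statement at every $n$, not just $n=1$; the Markov property at $T^\nu_{n-1}$ handles this, since the law of $Y_{T^\nu_n}$ is a mixture of the measures $m_y$.
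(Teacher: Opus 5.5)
Your proposal is correct and is essentially the paper's argument: the paper derives the corollary from Theorem \ref{newThm302} by remarking that $\bfb\in\hat{\Xi}^\nu$ is immediately equivalent to $\pi^\nu_\bfb\in\Pi^{\nu,\ast}$, and you spell out that equivalence, using the Markov property at $T^\nu_{n-1}$ for sufficiency and a large initial capital for necessity. Two slips are harmless: the pre-payout level is $Z^0_{T^\nu_n}=U^{\pi^\nu_\bfb}_{(n-)}\lor 0$ rather than $U^{\pi^\nu_\bfb}_{(n-)}$, and in the $B_1$ case the violated clause may be (i)(a) instead of (i)(b), which still gives $\pi^\nu_\bfb\notin\Pi^{\nu,\ast}$.
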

Furthermore, the following theorem characterizes $\un{\bfb}^\nu$ and $\bar{\bfb}^\nu$ in a special case.
\begin{Thm}\label{Thm303}
When $\nu$ has an exponential distribution with intensity $r>0$, the functions $\un{\bfb}^\nu$ and $\bar{\bfb}^\nu$ are finely lower semi-continuous and finely upper semi-continuous, respectively, with respect to the right process $Y$. 
\end{Thm}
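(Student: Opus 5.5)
The plan is to reduce fine semicontinuity to \emph{fine openness of super-level sets}, and then to obtain fine openness from the fact that, for exponential $\nu$, the relevant quantities are potentials of the right process $(X,Y)$ killed at rate $r+\bfq$. Recall the criterion for a right process: a function $f$ on $E$ is finely lower semi-continuous iff for every $c\in\bR$ and every $y$ with $f(y)>c$, $\bfP^Y_y$-a.s. one has $f(Y_t)>c$ for all sufficiently small $t>0$. Equivalently, $f$ is finely lower semi-continuous iff $\liminf_{t\downarrow0} f(Y_t)\ge f(y)$ $\bfP^Y_y$-a.s. So it suffices to show the following. First, for each rational $c\ge0$, the set $\{y:\un{\bfb}^\nu(y)>c\}$ is finely open. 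Second, $\{y:\bar{\bfb}^\nu(y)<c\}$ is finely open. Taking countable unions over rational $c$ then gives the statement.

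\textbf{Step 1 (threshold representation).} From the proof of Theorem \ref{Thm302a} (or by re-deriving it from Theorem \ref{newThm302} and the dynamic programming between dividend dates), I would extract a family $g(x,y)$, $x\ge0$, playing the role of the ``marginal value of one unit of capital kept at a dividend date''. Concretely, $g$ should be a one-sided derivative in $x$ of $V^\nu$, or of the continuation value $\bfE_{(x,y)}[e^{-\frq(T^\nu)}V^\nu(\cdot)]$ with capital injections reflected at $0$. The target properties are:
\begin{itemize}
\item $g(\cdot,y)$ is non-increasing in $x$;
\item $\un{\bfb}^\nu(y)=\inf\{x\ge0: g(x+,y)\le1\}$ and $\bar{\bfb}^\nu(y)=\sup\{x\ge0:g(x-,y)\ge1\}$.
\end{itemize}
This should match the two inequalities $\rho^{\nu,1}_\bfb\le1$ and $\rho^{\nu,2}_\bfb\ge1$. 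Here $\rho^{\nu,1}$ corresponds to perturbing the barrier upward (closed hitting $T^{\nu,+}_{\bfb-}$, open exit $\tau^-_0$), and $\rho^{\nu,2}$ to perturbing it downward. With this representation, for $c$ rational we get
\[
\{y:\un{\bfb}^\nu(y)>c\}=\bigcup_{c'\in\mathbb{Q},\,c'>c}\{y: g(c',y)>1\}
\]
(up to checking left/right limits), and similarly for $\bar{\bfb}^\nu$ with the reverse inequality. It therefore suffices that $y\mapsto g(c',y)$ is finely continuous for each fixed $c'$.

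\textbf{Step 2 (fine continuity via the exponential clock).} When $\nu$ is exponential with rate $r$, $N^\nu$ is a Poisson process independent of $(X,Y)$. Hence $(X,Y)$ killed at rate $r+\bfq(Y)$ is again a right process, and any quantity of the form
\[
y\mapsto\bfE_{(c',y)}\sbra{\int_0^\infty r e^{-rt-\frq(t)}\varphi(X_t,Y_t)\,\diff t}
\]
with bounded nonnegative $\varphi$ is an excessive function for that subprocess. By the additivity \eqref{200_01}, fixing the first coordinate is legitimate, and the same holds for the $\beta$-weighted ruin/injection terms obtained from first-exit functionals. Excessive functions of right processes are finely continuous; this is standard, cf.\ \cite{Sha1988}. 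Writing $g(c',\cdot)$ as a difference of two such (finite, thanks to \eqref{finite_ass}) excessive functions gives its fine continuity. The exponential memorylessness is exactly what makes the ``wait until the next dividend date'' functional a resolvent. For general $\nu$ this breaks, which explains why the statement is restricted to this case.

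\textbf{Main obstacle.} I expect the hard part to be Step 1, namely producing the monotone family $g$ whose level crossings are exactly $\un{\bfb}^\nu,\bar{\bfb}^\nu$. The difficulty is that $\rho^{\nu,i}_\bfb(y)$ depend on the whole function $\bfb$ through $T^{\nu,\pm}_\bfb$, not on the single number $\bfb(y)$. I would handle this by freezing the future barrier at the optimal one (any element of $\Xi^\nu$ gives the same continuation value by Theorem \ref{newThm302}). I would then vary only the payout level at the current date, using concavity of $V^\nu(\cdot,y)$ to obtain monotonicity of $g$. A secondary technicality is matching strict versus non-strict inequalities, i.e.\ $\tau^-_0$ versus $\tau^-_{0+}$ and $T^{\nu,+}_{\bfb}$ versus $T^{\nu,+}_{\bfb-}$, with the left and right $x$-derivatives. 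This determines which of the two barriers is lower and which is upper semicontinuous.
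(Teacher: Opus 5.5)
\textbf{Verdict: genuine gap, in Step 2.} The reduction to fine openness of $\{\un{\bfb}^\nu>c\}$ and $\{\bar{\bfb}^\nu<c\}$ is correct. Your Step 1 is already supplied by the paper. By \eqref{v_right_derivative}, Lemma~\ref{LemB01a} and $\un{\bfb}^\nu=\un{\bfb}_{V^\nu}$, the function $g:=\rho^{\nu,\un{\bfb}^\nu}_{p_{\un{\bfb}^\nu}}$ equals $V^{\nu\prime}_+$, is non-increasing in $x$, and satisfies $\un{\bfb}^\nu(y)=\sup\{x\geq 0: g(x,y)>1\}$.

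The gap is your claim that $y\mapsto g(c',y)$ is finely continuous for $Y$ because $g$ is built from excessive functions and, ``by additivity, fixing the first coordinate is legitimate''. Excessivity is a property of $g$ on $\bR\times E$ for the killed joint process. What it yields is right-continuity of $t\mapsto g(X_t,Y_t)$; in the paper this is Lemma~\ref{Lem317}, on $[0,\tau^-_{0+})$ from starting points $x>0$. It says nothing about $t\mapsto g(c',Y_t)$. The slice $y\mapsto g(c',y)$ has no reason to be excessive for $Y$. The Markov property at time $t$ produces $g(X_t,Y_t)$, not $g(c',Y_t)$, and \eqref{200_01} only translates the initial value of $X$; it does not keep $X$ at $c'$. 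Moreover, $g(\cdot,y)$ is only the right derivative of a concave function and can jump in $x$. For $\bfb\in\Xi^\nu$, \eqref{right_derivative_02} and \eqref{left_derivative} show that $V^\nu(\cdot,y)$ has right and left derivatives $\rho^{\nu,1}_\bfb(y)$ and $\rho^{\nu,2}_\bfb(y)$ at $\bfb(y)$. So a kink occurs whenever these differ, at a location that moves with $y$. Hence $g(X_t,Y_t)\to g(c',y)$ cannot be turned into $g(c',Y_t)\to g(c',y)$. Your intermediate claim of full fine continuity of every slice is unsupported, and it is much stronger than what the theorem needs.

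\textbf{The missing idea.} You only need fine openness of the level sets, and that follows from joint right-continuity plus monotonicity in $x$ once you let the first coordinate move rather than freezing it.
\begin{itemize}
\item Lower barrier: given $y$ with $\un{\bfb}^\nu(y)>c\geq 0$, choose $x\in(c,\un{\bfb}^\nu(y))$, so $g(x,y)>1$. Under $\bfP_{(x,y)}$, right-continuity of $X$ and of $t\mapsto g(X_t,Y_t)$ at $0$ gives $\delta>0$ with $X_t>c$ and $g(X_t,Y_t)>1$ for $t<\delta$. Hence $\un{\bfb}^\nu(Y_t)\geq X_t>c$.
\item Transfer to $Y$: the law of $Y$ under $\bfP_{(x,y)}$ is $\bfP^Y_y$ for every $x$. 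This, not freezing $x$, is the correct use of additivity, and it shows $\{\un{\bfb}^\nu>c\}$ is finely open.
\item Upper barrier: if $\bar{\bfb}^\nu(y)<c$, start at some $x\in(0,c)$ with $g(x,y)<1$; such $x$ exists by monotonicity. The same argument gives $\bar{\bfb}^\nu(Y_t)\leq X_t<c$ for small $t$.
\end{itemize}
This is exactly the paper's proof of Theorem~\ref{Thm303}. With your Step 2 replaced by it, your outline is complete, and the ``main obstacle'' you expected in Step 1 does not arise.
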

One of the main features of right processes is the following property: for every $\alpha>0$ and every $\alpha$-excessive function $f$, the process $t\mapsto f(X_t, Y_t)$ is right-continuous a.s. This property is used mainly in the proof of Theorem \ref{Thm303}. In the proofs of the other results, it seems possible to impose weaker assumptions on $(X,Y)$. 
\par
In this section, we will prove the above theorems. To that end, several lemmas will be required. 
We will proceed with the proofs of Theorems \ref{Thm302a} and \ref{newThm302} and Corollary \ref{Thm302} in the following steps.
\begin{itemize}
\item[\textbf{Step 1}] We define $\un{\Pi}^\nu \subset\Pi^\nu$ as a more tractable subclass of strategies that are easier to handle, and show that the problem can be reformulated as one of finding the necessary and sufficient conditions for a strategy in $\un{\Pi}^\nu$ to be optimal. 
\item[\textbf{Step 2}] 
Using the DP, we examine the necessary and sufficient conditions for a strategy belonging to $\un{\Pi}^\nu$ to be optimal, and prove 
a preliminary version of Theorem \ref{newThm302}, in which $\un{\bfb}^\nu( y)$ and $\bar{\bfb}^\nu(y)$ are replaced by barriers in the definition of $\Xi^\nu_{V^\nu}$, which is defined later.  
\item[\textbf{Step 3}] We show that $\Xi^\nu$ and $\Xi^\nu_{V^\nu}$ are equal. 
\end{itemize}
\par
\textbf{Step 1.} Let $\un{\Pi}^\nu$ be the set of strategies $\pi$ that satisfy
\begin{align}
\Delta L^\pi_t \leq (U^\pi_{t-}+\Delta X_t)\lor 0 ,\qquad 
R^\pi_t = -\rbra{\inf_{s \in[0, t]}(X_s-L^\pi_s) \land 0},\qquad t\geq 0, 
\label{300_01}
\end{align}
$\bfP_{(x, y)}$-a.s. for any $(x, y)\in\bR\times E$. 
Note that $\Pi^\nu_\pcb \subset\un{\Pi}^\nu$ and $\Pi^{\nu,\ast}\subset\un{\Pi}^\nu$. 
In addition, since the capital injections for strategies in $\un{\Pi}^\nu$ are uniquely determined by their dividend components, it is sufficient to describe only the dividend components when we define the strategies in $\un{\Pi}^\nu$. 
Therefore, in what follows, we will often describe only the dividend component when defining strategies belonging to $\un{\Pi}^\nu$.
\par
Let $L^0=\{L^0_t:t\geq 0\}$ and $R^0=\{R^0_t:t\geq 0\}$ denote the dividend and capital injection processes of the following strategy. At time $0$, pay dividends $x\lor 0 $ and inject capital $-(x\land 0)$. Thereafter, the strategy follows the MMPCB strategy with barrier $ 0$. 
The following lemma implies that $\un{\Pi}^\nu \subset \Pi^\nu$. 
\begin{Lem}\label{Lem304}
For all $\pi\in\un{\Pi}^\nu$ and $(x,y)\in\bR\times E$, we have 
\begin{align}
\int_{[0,\infty)}e^{-\frq(t)} \diff L^\pi_t \leq \int_{[0,\infty)}e^{-\frq(t)} \diff L^0_t,\qquad
\int_{[0,\infty)}e^{-\frq(t)} \diff R^\pi_t \leq \int_{[0,\infty)}e^{-\frq(t)} \diff R^0_t, 
\label{upperbound_by_0strategy}
\end{align}
$\bfP_{(x, y)}$-a.s.
In addition, there exists $B^\nu>0$ such that for $\pi\in\un{\Pi}^\nu$ and 
$(x, y)\in\bR\times E$, 
\begin{align}
\bfE_{(x, y)}\sbra{\int_{[0,\infty)}e^{-\frq(t)} \diff L^0_t}
\leq (x\lor0)+B^\nu,\qquad
\bfE_{(x, y)}\sbra{\int_{[0,\infty)}e^{-\frq(t)} \diff R^0_t}
\leq  -(x\land 0)+ B^\nu.
\label{finiteness}
\end{align}
\end{Lem}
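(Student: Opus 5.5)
The plan is to prove the two parts separately. First comes a pathwise comparison $L^\pi\le L^0$ and $R^\pi\le R^0$, which we then transfer to the discounted integrals. Second comes a moment bound for the strategy $L^0$, obtained from the additive property \eqref{200_01}, the Markov-type property \eqref{periodicMarkov} and assumption \eqref{finite_ass}.

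\emph{Pathwise comparison.} Fix $\pi\in\un{\Pi}^\nu$. We show by induction on $n\ge 0$ that $L^\pi_t\le L^0_t$ and $R^\pi_t\le R^0_t$ for $t\in[0,T^\nu_{n+1})$.

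For $n=0$: at time $0$ the constraint \eqref{300_01} gives $L^\pi_0\le x\lor 0=L^0_0$. Both dividend processes are then constant on $[0,T^\nu_1)$. Since $R=-(\inf_{[0,t]}(X-L)\land 0)$ is monotone in $L$, we get $R^\pi\le R^0$ there.

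Inductive step at $T^\nu_n$: the constraint $\Delta L^\pi_{T^\nu_n}\le (U^\pi_{T^\nu_n-}+\Delta X_{T^\nu_n})\lor 0$ means that no injection is triggered at $T^\nu_n$. Hence $R^\pi_{T^\nu_n}=R^\pi_{T^\nu_n-}$, and likewise for the barrier-$0$ strategy. The strategy $0$ leaves $U^0_{T^\nu_n}=0$, so $L^0_{T^\nu_n}=X_{T^\nu_n}+R^0_{T^\nu_n-}$. On the other hand, $U^\pi_{T^\nu_n}\ge 0$ gives
\begin{align}
L^\pi_{T^\nu_n}\le X_{T^\nu_n}+R^\pi_{T^\nu_n-}\le X_{T^\nu_n}+R^0_{T^\nu_n-}=L^0_{T^\nu_n},
\end{align}
where the middle inequality uses the induction hypothesis. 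Monotonicity of the reflection map again gives $R^\pi\le R^0$ on $[T^\nu_n,T^\nu_{n+1})$.

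\emph{From pointwise to discounted comparison.} For a c\`adl\`ag non-decreasing $L\ge 0$, integration by parts gives
\begin{align}
\int_{[0,t]}e^{-\frq(s)}\diff L_s=e^{-\frq(t)}L_t+\int_0^t\bfq(Y_s)e^{-\frq(s)}L_s\,\diff s .
\end{align}
Applying this to $L^\pi$ and $L^0$, the Lebesgue integral term is monotone in $L$. If $\int_{[0,\infty)} e^{-\frq}\diff L^0=\infty$ there is nothing to prove. Otherwise we bound the boundary term by
\begin{align}
e^{-\frq(t)}L^\pi_t\le e^{-\frq(t)}L^0_t\le e^{-\frq(t)}L^0_u+\int_{(u,t]}e^{-\frq}\diff L^0 \qquad (u<t).
\end{align}
Since $\frq(t)\ge qt$, letting $t\to\infty$ and then $u\to\infty$ shows the boundary term vanishes, and \eqref{upperbound_by_0strategy} follows. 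The same argument applies to $R$.

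\emph{Finiteness \eqref{finiteness}.} After time $0$, the barrier-$0$ strategy restarts from level $0$ at each $T^\nu_{n-1}$. Between $T^\nu_{n-1}$ and $T^\nu_n$, the injections and the dividend paid at $T^\nu_n$ are each bounded by $S_n:=\sup_{s\in[T^\nu_{n-1},T^\nu_n]}|X_s-X_{T^\nu_{n-1}}|$, apart from the initial amounts $x\lor0$ and $-(x\land 0)$. Using \eqref{periodicMarkov} at $T^\nu_{n-1}$, then \eqref{200_01}, and the bound $e^{-\frq(t)}\le e^{-qt}$, we get
\begin{align}
\bfE_{(x,y)}\sbra{e^{-\frq(T^\nu_{n})}S_n}\le \bfE\sbra{e^{-qT^\nu_{n-1}}}\sup_{y'\in E}\bfE_{(0,y')}\sbra{e^{-qT^\nu}\sup_{s\le T^\nu}|X_s|}.
\end{align}
For the last factor, we split $[0,T^\nu]$ into unit intervals and write $\sup_{s\le T^\nu}|X_s|\le\sum_{k<\lceil T^\nu\rceil}\sup_{s\in[k,k+1]}|X_s-X_k|$. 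Applying the Markov property at the integer times $k$ together with \eqref{200_01} and \eqref{finite_ass} bounds it by $\sum_k e^{-qk}M=M/(1-e^{-q})$. Summing over $n$ gives the factor $\sum_{n}(\bfE[e^{-qT^\nu}])^{n-1}=1/(1-\bfE[e^{-qT^\nu}])<\infty$, which yields the constant $B^\nu$.

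The main obstacle is making the inductive pathwise argument rigorous. The key point is that the constraint in \eqref{300_01} forbids injections at dividend times, so $R^\pi$ is determined on $[0,T^\nu_n]$ by $L^\pi$ on $[0,T^\nu_n)$ and the induction closes. The unit-interval splitting with the random horizon $T^\nu$ also needs care; there we will use the independence of $\bfr$ from $(X,Y)$.
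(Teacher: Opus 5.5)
Your architecture is the paper's: induction over the dividend intervals $[T^\nu_k,T^\nu_{k+1})$, integration by parts against $e^{-\frq}$, and a geometric series in $\bfE[e^{-qT^\nu}]$ obtained from \eqref{periodicMarkov}, \eqref{200_01} and \eqref{finite_ass}. Carrying only $L$ in the induction and deducing $R^\pi\le R^0$ from monotonicity of the reflection map is a clean variant of the paper's induction, which also carries $U^\pi\ge U^0$. Your treatment of the boundary term $e^{-\frq(t)}L_t$ is also more careful than the paper's.

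However, the inductive step at $T^\nu_n$ rests on a false claim. The constraint in \eqref{300_01} rules out injections \emph{caused by the dividend}, but not injections \emph{at} $T^\nu_n$. Suppose $U^\pi_{(n-)}=U^\pi_{T^\nu_n-}+\Delta X_{T^\nu_n}<0$. Then $R^\pi_{T^\nu_n}=R^\pi_{T^\nu_n-}-U^\pi_{(n-)}>R^\pi_{T^\nu_n-}$ and $U^\pi_{T^\nu_n}=0$, so $L^\pi_{T^\nu_n}=X_{T^\nu_n}+R^\pi_{T^\nu_n}>X_{T^\nu_n}+R^\pi_{T^\nu_n-}$. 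Hence the first inequality in your chain fails. Likewise, $L^0_{T^\nu_n}=X_{T^\nu_n}+R^0_{T^\nu_n-}$ is false whenever the barrier-$0$ strategy injects at $T^\nu_n$.

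This case is not vacuous. The paper deliberately keeps $\Delta X_{T^\nu_n}$ in $U^\pi_{(n-)}$ and in \eqref{300_01}. For atomic $\nu$ (e.g.\ $\nu=\delta_{2^{-m}}$ as in Section~\ref{Sec_approximation}), a MAP can jump down at dividend dates with positive probability: take $X$ jumping down each time a periodically rotating $Y$ passes a fixed point.

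The repair is short. If $\pi$ injects at $T^\nu_n$, the constraint forces $\Delta L^\pi_{T^\nu_n}=0$, so $L^\pi_{T^\nu_n}=L^\pi_{T^\nu_n-}\le L^0_{T^\nu_n-}\le L^0_{T^\nu_n}$. Otherwise your chain goes through once its last equality is replaced by $X_{T^\nu_n}+R^0_{T^\nu_n-}\le X_{T^\nu_n}+R^0_{T^\nu_n}=L^0_{T^\nu_n}$. The paper instead first proves $R^\pi_{T^\nu_n}=R^\pi_{T^\nu_n-}-(U^\pi_{(n-)}\land 0)\le R^0_{T^\nu_n}$ using $U^\pi_{T^\nu_n-}\ge U^0_{T^\nu_n-}$. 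The correct form of the ``key point'' in your last paragraph is that $R^\pi_{T^\nu_n}$ depends only on $L^\pi$ restricted to $[0,T^\nu_n)$, not that $R^\pi$ cannot jump at $T^\nu_n$.

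In the moment bound, the dividend at $T^\nu_n$ is $(X_{T^\nu_n}-X_{T^\nu_{n-1}})-\inf_{s\in[T^\nu_{n-1},T^\nu_n]}(X_s-X_{T^\nu_{n-1}})$, which is bounded by $2S_n$, not $S_n$; this is harmless. More substantively, you display only the dividend term, discounted at $T^\nu_n$. Injections on $(T^\nu_{n-1},T^\nu_n]$ cannot be discounted at $T^\nu_n$. If you discount them at $T^\nu_{n-1}$, you need $\sup_{y}\bfE_{(0,y)}[\sup_{s\le T^\nu}|X_s|]<\infty$. That fails in general, since no moment condition is imposed on $\nu$: take $X$ a Brownian motion and $\bfE[\sqrt{T^\nu}]=\infty$.

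You must keep the factor $e^{-q(t-T^\nu_{n-1})}$ inside the integral and apply your unit-interval splitting to the regulator itself. The increase of $-\un{X}$ on $(k,k+1]$ is at most $\sup_{s\in[k,k+1]}|X_s-X_k|$, which gives $\bfE_{(0,y)}[\int_{(0,T^\nu]}e^{-qt}\,\diff(-\un{X}_t)]\le M/(1-e^{-q})$. This is exactly how the paper bounds the injection term. With these two repairs your proof coincides with the paper's.
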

Since roughly half of the proof is essentially the same as that of \cite[Lemma 3.2]{Nob2021}, we give the proof in Appendix \ref{lemma_proof} of the Supplementary Material.
\par
Together with Lemma \ref{Lem304}, the following lemma shows that, to establish one of our main results, it suffices to study necessary and sufficient conditions for a strategy in $\un{\Pi}^\nu$ to be optimal. 
\begin{Lem}\label{Lem304a}
We have 
\begin{align}
\sup_{\pi\in\un{\Pi}^\nu} v_\pi(x, y)
=V^{\nu} (x, y),
\quad (x, y )\in\bR\times E. \label{Lem304a_001}
\end{align}
In addition, any $\pi\in \Pi^\nu\backslash\un{\Pi}^\nu$ satisfies 
$v_\pi(x, y) < V^{\nu} (x, y)$, 
for some $(x, y)\in\bR\times E$. 
\end{Lem}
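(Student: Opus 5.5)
Given $\pi\in\Pi^\nu$, I will construct $\un{\pi}\in\un{\Pi}^\nu$ with $v_{\un\pi}\geq v_\pi$ everywhere, and with strict inequality at some point whenever $\pi\notin\un{\Pi}^\nu$. Since $\un{\Pi}^\nu\subset\Pi^\nu$ by Lemma \ref{Lem304}, this gives \eqref{Lem304a_001} and also the second claim.

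\textbf{Construction of $\un\pi$.} The construction proceeds pathwise, in two stages.

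The first stage corrects the injections while keeping the dividends. I set $\tilde L=L^\pi$ and $\tilde R_t:=-\rbra{\inf_{s\in[0,t]}(X_s-L^\pi_s)\land 0}$. This $\tilde R$ is the minimal non-decreasing process keeping $X-L^\pi+\tilde R\geq 0$, so $\tilde R_t\leq R^\pi_t$ for all $t$. Moreover $R^\pi-\tilde R$ is non-decreasing, by the usual Skorokhod minimality argument. It follows that $\int e^{-\frq}\diff\tilde R\leq\int e^{-\frq}\diff R^\pi$, and $\tilde U\leq U^\pi$.

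The second stage truncates the dividends. At each $T^\nu_n$ I pay $\Delta \un L_{T^\nu_n}:=\Delta L^\pi_{T^\nu_n}\land\rbra{(\un U_{T^\nu_n-}+\Delta X_{T^\nu_n})\lor0}$, and let $\un R$ be the reflection of $X-\un L$ as above. This is defined inductively over the renewal times, and progressive measurability of the new $\ell$ is clear.

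The comparison I need is
\[
\int e^{-\frq}\diff \un L-\beta\int e^{-\frq}\diff\un R\ \geq\ \int e^{-\frq}\diff L^\pi-\beta\int e^{-\frq}\diff R^\pi .
\]
The reason is that any dividend cut $\delta$ at $T^\nu_n$, where the pre-payout level was at most $\delta$ above $0$, would otherwise have forced an immediate injection. More precisely, the pathwise difference $D_t:=(L^\pi_t-\un L_t)-(R^\pi_t-\un R_t)$ should satisfy $D_t\geq 0$, and the removed dividend is matched by injections of at least the same amount occurring at the same time or later. Since $\beta>1$ and the discount $e^{-\frq}$ is decreasing, this yields the inequality. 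I would organise the proof by showing that $\un U\leq U^\pi$ fails nowhere, and that the pair $(L^\pi-\un L,\ R^\pi-\un R)$ satisfies $\int e^{-\frq}\diff(L^\pi-\un L)\leq\int e^{-\frq}\diff(R^\pi-\un R)$. This last inequality would come from an integration-by-parts comparison of the two cumulative processes, using that $\un R$ is the minimal reflection.

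\textbf{Strictness.} Suppose $\pi\notin\un\Pi^\nu$. Then with positive probability under some $\bfP_{(x,y)}$, either $R^\pi\neq\tilde R$ or some dividend is cut. In the first case, the strict inequality $\int e^{-\frq}\diff\tilde R<\int e^{-\frq}\diff R^\pi$ holds on that event, because $\frq$ is strictly increasing ($\bfq>q>0$). In the second case, a cut dividend $\delta>0$ is replaced by injections costing at least $\beta\delta e^{-\frq(T^\nu_n)}>\delta e^{-\frq(T^\nu_n)}$. In either case the expectations then differ strictly, since all quantities are integrable by \eqref{adcon} and Lemma \ref{Lem304}.

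\textbf{Main obstacle.} I expect the main obstacle to be the pathwise comparison in the second stage. Cuts at different times interact through the running infimum, so the bookkeeping showing that each unit of removed dividend is paired with at least one unit of extra injection no earlier is delicate. I would try induction over $n$, handling one renewal interval $[T^\nu_n,T^\nu_{n+1})$ at a time. On each interval I would use the explicit formula for $\un R$ and the inequality $\sup_{s\leq t}(a_s+c)^- \leq \sup_{s\leq t}a_s^-+c^-$.
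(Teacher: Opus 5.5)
Your architecture is the paper's. You first replace $R^\pi$ by the minimal reflection $\tilde{R}$ (the paper's $\pi'$), then truncate the dividends, and your truncated strategy even coincides with the paper's $\un{\pi}$. The gap is the second stage, which you only sketch, and the sketch points the wrong way. Suppose each removed dividend were matched by injections ``at the same time or later'', as your claim $D_t\geq 0$ says. Then the monotonicity of $e^{-\frq}$ works against you: later injections are discounted more, so even after multiplying by $\beta$ they can cost less than the dividends removed. An integration-by-parts comparison needs the opposite cumulative domination $L^\pi_t-\un{L}_t\leq R^\pi_t-\un{R}_t$, i.e.\ $D_t\leq 0$. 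This is your other statement $\un{U}\leq U^\pi$, since $D=\un{U}-U^\pi$. In fact $D_t=\tilde{R}_t-R^\pi_t$ (shown below), so $D_t\geq 0$ fails whenever $R^\pi_t>\tilde{R}_t$. As written, the proposal does not establish the key inequality of the second stage, and it treats an identity as a delicate bookkeeping problem.

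The missing observation is that the truncation does not change the controlled process. Let $a$ be the pre-payout level at $T^\nu_n$; by induction it is the same for $\tilde{\pi}:=(L^\pi,\tilde{R})$ and for $\un{\pi}$. At that instant, $\tilde{\pi}$ injects $(\Delta L^\pi_{T^\nu_n}-a)\lor 0$ and $\un{\pi}$ injects $(-a)\lor 0$. The cut is $\Delta L^\pi_{T^\nu_n}-\Delta L^\pi_{T^\nu_n}\land(a\lor 0)$. Checking the cases $a\geq 0$ and $a<0$ separately, the cut equals the difference of the two injections, and both post-payout levels equal $(a-\Delta L^\pi_{T^\nu_n})\lor 0$. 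Between renewal times, both processes are reflections of the same increments of $X$. Hence $\un{U}=\tilde{U}$ and $\un{R}=\tilde{R}-C$, with $C:=L^\pi-\un{L}$ non-decreasing. Therefore $v_{\un{\pi}}-v_{\tilde{\pi}}=(\beta-1)\bfE_{(x,y)}\bigl[\int_{[0,\infty)}e^{-\frq(t)}\diff C_t\bigr]\geq 0$, strictly when $C\not\equiv 0$ with positive probability. This is exactly \eqref{ineq_001}, where the paper simply defines $R^{\un{\pi}}:=R^{\pi'}-\tilde{L}^{\pi',\un{\pi}}$. Cuts at different times never interact, so no induction over renewal intervals is needed.

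Finally, your first-stage claim that $R^\pi-\tilde{R}$ is non-decreasing is false. Take $L^\pi=0$, $R^\pi\equiv 10$ and a path of $X$ dropping from $5$ to $-3$ at time $1$: the difference drops from $10$ to $7$. The claim is not needed, though. The pointwise inequality $\tilde{R}\leq R^\pi$ together with $\int_{[0,\infty)}e^{-\frq(t)}\diff R_t=\int_0^\infty\bfq(Y_s)e^{-\frq(s)}R_s\diff s$ suffices, as in \eqref{ineq_002}.
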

\begin{proof}
For \eqref{Lem304a_001}, it is sufficient to show that for a given $\pi\in\Pi^\nu$, there exists $\un{\pi}\in\un{\Pi}^\nu$ such that 
\begin{align}
v_{\un{\pi}}(x, y)
\geq  v_\pi(x, y),
\quad (x, y )\in\bR\times E. \label{Lem305_001a}
\end{align}
For fixed $\pi\in\Pi^\nu$, we define $\pi^\prime\in\Pi^\nu$ as 
\begin{align}
L^{\pi^\prime}_t=L^{\pi}_t,\quad 
R^{\pi^\prime}_t=-\rbra{\inf_{s \in[0, t]}(X_s-L^{\pi}_s)\land 0},\qquad t\geq 0. 
\label{ubs}
\end{align} 
Then, the strategy $\pi^\prime$ satisfies the second equation of \eqref{300_01}. 
From condition (iii) in the definition of strategies in $\Pi^\nu$ and \eqref{ubs}, we have, for $t\geq 0$,  
\begin{align}
R^{\pi}_t-R^{\pi^\prime}_t=R^{\pi}_t+\inf_{s \in[0, t]}(X_s-L^{\pi}_s)
\geq\inf_{s \in[0, t]}(X_s-L^{\pi}_s+R^{\pi}_s)\geq 0.
\end{align} 
Thus, we have, for $(x, y)\in\bR \times E$,  
\begin{align}
v_{\pi^\prime}(x, y)-  v_{\pi}(x, y)
=&\beta\bfE_{(x, y)}\sbra{\int_{[0,\infty)}e^{-\frq(t)} \diff R^{\pi}_t-\int_{[0,\infty)}e^{-\frq(t)} \diff R^{\pi^\prime}_t}\\
=&\beta\bfE_{(x, y)}\sbra{\int_0^\infty \bfq(Y_s) e^{-\frq(s)} ( R^{\pi}_s-R^{\pi^\prime}_s)\diff s}\geq 0. \label{ineq_002}
\end{align}
We define $\un{\pi}\in\Pi^\nu$ as 
\begin{align}
L^{\un{\pi}}_t=\sum_{s \in[0, t]}\Delta L^{\pi^\prime}_s\land \cbra{(U^{\pi^\prime}_{s-}+\Delta X_s)\lor 0},\quad 
R^{\un{\pi}}_t=R^{\pi^\prime}_t -\tilde{L}^{\pi^\prime,\un{\pi}}_t,\qquad t\geq 0, 
\end{align} 
where $\tilde{L}^{\pi^\prime,\un{\pi}}_t:=\sum_{s\in[0,t]}(\Delta L^{\pi^\prime}_s-\Delta L^{\un{\pi}}_s)$ for $t\geq 0$.
Then, the resulting controlled processes corresponding to $\pi^\prime$ and $\un{\pi}$ coincide, and it is easy to verify that $\un{\pi}\in \un{\Pi}^\nu$.
By the definition of $\un{\pi}$, we have, for $(x, y)\in\bR \times E$,  
\begin{align}
&v_{\un{\pi}}(x, y)-v_{\pi^\prime}(x, y)\\
&\ \ =\bfE_{(x, y)}\bigg{[}\int_{[0,\infty)}e^{-\frq(t)} \diff L^{\un{\pi}}_t-\int_{[0,\infty)}e^{-\frq(t)} \diff L^{\pi^\prime}_t 
-\beta\rbra{\int_{[0,\infty)}e^{-\frq(t)} \diff R^{\un{\pi}}_t-\int_{[0,\infty)}e^{-\frq(t)} \diff R^{\pi^\prime}_t}\bigg{]}\\
&\ \ =\bfE_{(x, y)}\sbra{(\beta-1)\int_{[0,\infty)}e^{-\frq(t)} \diff \tilde{L}^{\pi^\prime,\un{\pi}}_t}\geq 0, \label{ineq_001}
\end{align}
where the last inequality follows from the fact that the process $\cbra{ \tilde{L}^{\pi^\prime,\un{\pi}}_t:t\geq 0}$ is non-decreasing. 
By \eqref{ineq_002} and \eqref{ineq_001}, we have \eqref{Lem305_001a}. 
\par 
If $\pi\in\Pi^\nu$ does not belong to $\un{\Pi}^\nu$, 
there exists $(x, y) \in \bR \times E$ such that the event that \eqref{300_01} fails has positive $\bfP_{(x, y)}$-probability. Let us fix such a $(x, y)$.
First, we assume that the second equation in \eqref{300_01} fails with positive probability under the probability measure $\bfP_{(x, y)}$. 
Then, with positive $\bfP_{(x,y)}$-probability, the set of times at which $R^{\pi}_t-R^{\pi^\prime}_t=R^{\pi}_t+\inf_{s \in[0, t]}(X_s-L^{\pi^\prime}_s)>0$ has strictly positive Lebesgue measure, and hence 
the last term in \eqref{ineq_002} is strictly positive. 
Thus, $\pi^\prime$ strictly improves upon $\pi$, and $\pi$ is not an optimal strategy. 
Next, we assume that the second equation in \eqref{300_01} always holds, whereas the first inequality in \eqref{300_01} fails with positive probability under the probability measure $\bfP_{(x, y)}$.
In this case, $\pi$ and $\pi^\prime$ coincide. 
However, since the last term in \eqref{ineq_001} is strictly positive, $\un{\pi}$ yields a strictly larger value than $\pi$, and $\pi$ is not an optimal strategy. 
The proof is complete. 
\end{proof}
\par
\textbf{Step 2.} 
As mentioned earlier, we apply the DP here. To do so, we define a class of functions related to the expected NPV of dividends and capital injections, and then define operators that act on the functions in that class. 
\par
For a measurable function $f$ on $\bR \times E$ such that $x\mapsto f(x, y)$ has a right derivative for $y\in E$, let $x\mapsto f^\prime_+(x, y)$ denote its right derivative.  
Similarly, if $x\mapsto f(x,y)$ has a left derivative for $y\in E$, then we let $x\mapsto f^\prime_-(x,y)$ denote its left derivative.
Let $\Gamma^\nu$ be the set of measurable functions $f$ on $\bR \times E$ that satisfy the following conditions. 
\begin{enumerate}
\item For $y \in E$, $x\mapsto f(x, y)$ is a concave function. 
\item For $y \in E$, the right derivative $x\mapsto f^\prime_+(x, y)$ 
satisfies 
$f^\prime_+(x, y)=\beta$ for $x <0$ and    
$f^\prime_+(x, y) \in [0, \beta]$ for $x\geq 0$. 
\item For $(x,y)\in [0,\infty)\times E$, the following inequalities hold
\begin{align}
f(x, y)\leq \bfE_{(x,y)}\sbra{\int_{[0,\infty)} e^{-\frq(t)} \diff L^0_t},\qquad  f(0, y)\geq  -\beta \bfE_{(0,y)}\sbra{\int_{[0,\infty)} e^{-\frq(t)} \diff R^0_t}. 
\end{align}
\end{enumerate} 

To formulate the DP, 
we define operators $\cV^{\nu}_\pi$ with $\pi \in \un{\Pi}^\nu$ and $\cV^{\nu}$ acting on $\Gamma^\nu$ as follows: for $f \in \Gamma^\nu$ and $(x, y) \in \bR\times E$, 
\begin{align}
\cV^{\nu}_\pi f (x, y)= 
\bfE_{(x, y)}\sbra{ e^{-\frq(T^\nu)} L^\pi_{T^\nu}  -\beta\int_{[0,T^\nu]}e^{-\frq(t)} \diff R^\pi_t+ e^{-\frq(T^\nu)} f(U^\pi_{T^\nu}, Y_{T^\nu})}
, 
\end{align}
and
$\cV^{\nu} f (x, y)=\sup_{\pi\in\un{\Pi}^\nu} \cV^{\nu}_\pi f (x, y)$. 
Although the operators $\cV^{\nu}_\pi$ and $\cV^\nu$ are primarily applied to functions in $\Gamma^\nu$, we shall occasionally apply them to other functions on $\bR\times E$, provided that the expression is well-defined.
\par
For $f \in\Gamma^\nu$, we define non-negative measurable functions $\un{\bfb}_f$ and $\bar{\bfb}_f$ on $E$ as 
\begin{align}
\un{\bfb}_f (y):=& \sup \{x \geq 0: f^\prime_+(x, y)>1\},
\quad
\bar{\bfb}_f (y):= \inf \{x \geq 0 : f^\prime_+(x, y)<1\},
\qquad y \in E, 
\end{align}
where $\sup \varnothing =0$ and $\inf\varnothing=\infty$. 
Note that $\un{\bfb}_f (y)\leq \bar{\bfb}_f (y)$ for $y \in E$. 
We can rewrite $\cV^{\nu}$ in terms of a concrete strategy in $\un{\Pi}^\nu$ as follows. 
\begin{Lem}\label{Lem303}
We fix $f \in \Gamma^\nu$. 
Let $\bfb_f$ be a measurable function on $E$ such that 
$\un{\bfb}_f (y)\leq\bfb_f(y) \leq \bar{\bfb}_f (y)$ for $m_{y^\prime}$-a.e. $y \in E$ for all ${y^\prime}\in E$. 
Then, for $(x, y )\in\bR\times E$, we have 
$\cV^{\nu}f(x, y)=\cV^{\nu}_{\pi^\nu_{\bfb_f}}f(x, y)$. 
\end{Lem}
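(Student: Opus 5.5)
The plan is to exploit the fact that on $[0,T^\nu]$ the only dividend opportunity is at $T^\nu$ itself. This reduces $\cV^\nu_\pi f$ to a pathwise one-dimensional maximisation at time $T^\nu$, which the MMPCB strategy with barrier $\bfb_f$ solves.

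\textbf{Step 1 (common part of all strategies).} Since $T^\nu=r_1>0$ and $L^\pi=\int\ell^\pi\,\diff N^\nu$, every $\pi\in\un{\Pi}^\nu$ has $L^\pi_t=0$ for $t<T^\nu$. By the second relation in \eqref{300_01}, $R^\pi_t=-(\inf_{s\in[0,t]}X_s\land 0)$ for $t<T^\nu$, which does not depend on $\pi$. Hence the pre-payment level
\[
U^\pi_{(1-)}=X_{T^\nu}+R^\pi_{T^\nu-}
\]
is the same random variable for all $\pi$, and I denote it by $W$.

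Next I identify what happens at $T^\nu$.
\begin{itemize}
\item If $W<0$, the first relation in \eqref{300_01} forces $\Delta L^\pi_{T^\nu}=0$, and $R^\pi$ jumps by $-W$.
\item If $W\ge 0$, then $\Delta L^\pi_{T^\nu}=:l\in[0,W]$, and $X_{T^\nu}-L^\pi_{T^\nu}\ge -R^\pi_{T^\nu-}$, so $R^\pi$ has no jump at $T^\nu$.
\end{itemize}
In both cases $\int_{[0,T^\nu]}e^{-\frq(t)}\diff R^\pi_t$ equals a $\pi$-independent quantity $J$. With $W^+:=W\lor 0$ we then get
\[
\cV^\nu_\pi f(x,y)=\bfE_{(x,y)}\big[-\beta J+e^{-\frq(T^\nu)}\big(l+f(W^+-l,Y_{T^\nu})\big)\big].
\]
Here $l=\Delta L^\pi_{T^\nu}\in[0,W^+]$, and this is the only $\pi$-dependent ingredient. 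Finiteness of all terms follows from Lemma \ref{Lem304} and condition (iii) in the definition of $\Gamma^\nu$; together with concavity, that condition bounds $f(\cdot,y)$ on $[0,\infty)$ from above and below.

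\textbf{Step 2 (pointwise maximisation).} For fixed $y'\in E$ and $w\ge0$, put $g(l):=l+f(w-l,y')$ on $[0,w]$. This function is concave, with right derivative $1-f'_-(w-l,y')$ and left derivative $1-f'_+(w-l,y')$. By the definitions of $\un{\bfb}_f$ and $\bar{\bfb}_f$ and monotonicity of $f'_\pm(\cdot,y')$:
\begin{itemize}
\item $f'_+(u,y')\ge1$ for $u<\un{\bfb}_f(y')$, so $g$ is non-decreasing as long as $w-l\ge \un{\bfb}_f(y')$;
\item $f'_+(u,y')\le 1$ for $u\ge\bar{\bfb}_f(y')$ (up to the endpoint convention), so $g$ is non-increasing once $w-l\le\bar{\bfb}_f(y')$.
\end{itemize}
Hence for any $b\in[\un{\bfb}_f(y'),\bar{\bfb}_f(y')]$ the choice $l^\ast=(w-b)\lor0$ maximises $g$. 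The cases $w<b$ and the value $\bar{\bfb}_f=\infty$ are handled separately, with $l^\ast=0$.

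Applying this with $w=W^+(\omega)$ and $y'=Y_{T^\nu}(\omega)$ gives the pathwise bound
\[
l+f(W^+-l,Y_{T^\nu})\le (W^+-\bfb_f(Y_{T^\nu}))\lor0+f\big(W^+\land \bfb_f(Y_{T^\nu}),Y_{T^\nu}\big),
\]
valid on the event $\{\un{\bfb}_f(Y_{T^\nu})\le\bfb_f(Y_{T^\nu})\le\bar{\bfb}_f(Y_{T^\nu})\}$. Since $\bfr$ is independent of $(X,Y)$, the law of $Y_{T^\nu}$ under $\bfP_{(x,y)}$ is $m_y$. By hypothesis this event therefore has full $\bfP_{(x,y)}$-probability for every $(x,y)$.

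\textbf{Step 3 (attainment and conclusion).} From the construction ($\star$) I check that $\pi^\nu_{\bfb_f}\in\Pi^\nu_\pcb\subset\un{\Pi}^\nu$. On $[0,T^\nu)$ it follows the reflection at $0$, so its pre-payment level at $T^\nu$ is $Z^0_{T^\nu}=W^+$. It then pays exactly $(W^+-\bfb_f(Y_{T^\nu}))\lor0$, which is the maximiser from Step 2. Taking expectations yields
\[
\cV^\nu_\pi f\le\cV^\nu_{\pi^\nu_{\bfb_f}}f \quad\text{for every }\pi\in\un{\Pi}^\nu,
\]
and hence $\cV^\nu f=\cV^\nu_{\pi^\nu_{\bfb_f}}f$.

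I expect the main obstacle to be the careful pathwise bookkeeping in Step 1. The key point is showing that $R^\pi$ on $[0,T^\nu]$, including a possible jump at $T^\nu$, is identical for all strategies in $\un{\Pi}^\nu$, in particular when $W<0$ because of a negative jump of $X$ at $T^\nu$. The endpoint conventions in the definitions of $\un{\bfb}_f$ and $\bar{\bfb}_f$ (right versus left derivatives) also need care in Step 2.
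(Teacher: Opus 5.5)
Your proposal is correct and follows the paper's own proof. You first show that, for every $\pi\in\un{\Pi}^\nu$, the capital-injection cost on $[0,T^\nu]$ and the pre-dividend level $X^0_{T^\nu}=W^+$ do not depend on $\pi$. These are the paper's \eqref{303_02} and \eqref{setsumei}; your Step 1 justifies them more carefully than the paper, including the case of a negative jump of $X$ at $T^\nu$. You then maximise $l\mapsto l+f(W^+-l,Y_{T^\nu})$ pathwise, exactly as in \eqref{Lem305_001}--\eqref{303_01}.

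One step in Step 2 needs repair, because you cite the wrong halves of the definitions of $\un{\bfb}_f$ and $\bar{\bfb}_f$. Your conclusions are right: $g$ is non-decreasing where $w-l\ge\un{\bfb}_f(y')$ and non-increasing where $w-l\le\bar{\bfb}_f(y')$. They follow from two facts:
\begin{itemize}
\item $f'_+(u,y')\le 1$ for $u>\un{\bfb}_f(y')$, by the definition of the supremum;
\item $f'_+(u,y')\ge 1$ for $u<\bar{\bfb}_f(y')$, by the definition of the infimum together with $f'_+=\beta$ on $(-\infty,0)$.
\end{itemize}
The facts you wrote are $f'_+\ge 1$ below $\un{\bfb}_f$ and $f'_+\le 1$ above $\bar{\bfb}_f$. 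These only give monotonicity on the smaller regions $\{w-l\ge\bar{\bfb}_f(y')\}$ and $\{w-l\le\un{\bfb}_f(y')\}$. So they do not show that $g$ is flat on $[w-\bar{\bfb}_f(y'),w-\un{\bfb}_f(y')]$, and that flatness is what makes $(w-b)\lor 0$ a maximiser for every $b$ in the interval. With the two facts swapped in, the argument is complete.

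A minor point: in your finiteness remark, the lower bound $f(\cdot,y)\ge f(0,y)$ on $[0,\infty)$ comes from $f'_+\ge 0$ in condition (ii), not from concavity.
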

\begin{proof}
By \eqref{300_01}, we have, for $\pi\in\un{\Pi}^\nu$, 
\begin{align}
\beta\int_{[0,T^\nu]}e^{-\frq(t)} \diff R^{\pi}_t  
=\beta\int_{[0,T^\nu]}e^{-\frq(t)} \diff ((-\un{X}_t)\lor 0) ,
\label{303_02}
\end{align}
for all $(x,y)\in\bR\times E$, $\bfP_{(x, y)}$-a.s., where $\un{X}_t:= \inf_{s \in[0, t]} X_s$ for $t\geq 0$.  
Moreover, 
\begin{align}
\begin{aligned}
&\text{if no dividend is paid, the value of the }\\ 
&\qquad\qquad\qquad\qquad\qquad\text{resulting controlled process at time }T^\nu\text{ is }X^0_{T^\nu}, 
\end{aligned}
\label{setsumei}
\end{align}
where 
$X^0_t:= X_t - \rbra{\un{X}_t \land 0}$ 
for $t\geq 0 $.
Let us define a measurable function $g^z_f$ on $[0,z]\times E$ for $z \in[0, \infty)$ by 
$g^z_f(x, y)=x+ f(z-x, y)$. 
Then $x \mapsto g^z_f(x, y)$ has a left derivative, denoted by $x \mapsto g^{z\prime}_{f-}(x, y)$, which satisfies
$g^{z\prime}_{f-}(x, y) = 1-f^\prime_+(z-x, y)$ 
for $(x, y)\in[0,z]\times E$. 
Thus, we have, for $y\in E$, 
\begin{align}
g^{z\prime}_{f-}(x, y)
\begin{cases}
> 0, \qquad & x \in [0, z-\bar{\bfb}_f (y)), 
\\
= 0,\qquad & x \in [z-\bar{\bfb}_f (y), z-\un{\bfb}_f (y)], 
\\
< 0, \qquad &  x\in (z-\un{\bfb}_f (y), z]. 
\end{cases}
\label{Lem305_001}
\end{align}
Therefore, for $y \in E$, $x\mapsto g^{z}_{f}(x, y)$ is strictly increasing on $[0, z-\bar{\bfb}_f (y)]$ and strictly decreasing on $[z-\un{\bfb}_f (y), z]$, and thus for every ${y^\prime}\in E$ and for $m_{y^\prime}$-a.e. $y\in E$, 
\begin{align}
g^z_f((z-\bfb_f (y))\lor 0, y)\geq g^z_f(x, y),\qquad x \in [0,z].
\label{ineqg} 
\end{align}
By \eqref{setsumei}, the definition of $\pi^\nu_{\bfb_f}$ and \eqref{ineqg}, we have, for $\pi\in\un{\Pi}^\nu$, 
\begin{align}
L^{\pi^\nu_{\bfb_f}}_{T^\nu}+ f(U^{\pi^\nu_{\bfb_f}}_{T^\nu}, Y_{T^\nu})
=&
g^{X^0_{T^\nu}}_f((X^0_{T^\nu}-\bfb_f (Y_{T^\nu}))\lor 0, Y_{T^\nu}) \\
\geq &
g^{X^0_{T^\nu}}_f(L^\pi_{T^\nu}, Y_{T^\nu})
=L^{\pi}_{T^\nu}+ f(U^{\pi}_{T^\nu}, Y_{T^\nu}), \label{303_01}
\end{align}
for all $(x,y)\in\bR\times E$, $\bfP_{(x, y)}$-a.s. 
By \eqref{303_02} and \eqref{303_01}, 
we have for $\pi\in\un{\Pi}^\nu$ and $(x, y)\in\bR\times E$, 
\begin{align}
\cV^{\nu}_{\pi^\nu_{\bfb_f}}f(x, y)\geq \cV^{\nu}_{\pi}f(x, y).
\end{align}
The proof is complete. 
\end{proof}
To implement the DP, we use the fact that iterates of the operator $\cV^{\nu}$ applied to a particular function converge to the value function.  
To establish this, we need several lemmas, which are presented below.
\begin{Lem}\label{Lem306}
For $f \in \Gamma^\nu$, we have $\cV^{\nu} f \in\Gamma^\nu$. 
\end{Lem}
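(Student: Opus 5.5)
The plan is to use Lemma \ref{Lem303} to replace the supremum in $\cV^\nu f$ by a single explicit expression, and then to check conditions (i)--(iii) of $\Gamma^\nu$ pathwise, using the additive property \eqref{200_01}.

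\textbf{Explicit form of $\cV^\nu f$.} Define the terminal payoff
\begin{align}
\hat f(z,y^\prime):=\sup_{l\in[0,z]}\{l+f(z-l,y^\prime)\},\qquad z\geq 0 .
\end{align}
By the proof of Lemma \ref{Lem303} (see \eqref{303_02}, \eqref{setsumei} and \eqref{ineqg}), $\cV^\nu f=\cV^\nu_{\pi^\nu_{\bfb_f}}f$. Hence
\begin{align}
\cV^\nu f(x,y)=\bfE_{(x,y)}\sbra{-\beta\int_{[0,T^\nu]}e^{-\frq(t)}\diff((-\un{X}_t)\lor0)+e^{-\frq(T^\nu)}\hat f(X^0_{T^\nu},Y_{T^\nu})}.
\end{align}
By \eqref{200_01}, extended to path functionals by the Markov additive structure and the independence of $\bfr$, this equals the same expectation under $\bfP_{(0,y)}$ with $X$ replaced by $x+X$.

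For each $y^\prime$, the map $z\mapsto\hat f(z,y^\prime)$ is concave and non-decreasing on $[0,\infty)$, with right derivative $f^\prime_+(z,y^\prime)\land1\in[0,1]$. This is a sup-convolution of the concave function $f$ with a linear function; the derivative formula follows from \eqref{Lem305_001}.

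\textbf{Concavity and the derivative conditions.} I would study, for a fixed path under $\bfP_{(0,y)}$ and a fixed $T^\nu$, the function
\begin{align}
\Phi(x):=-\beta\int_{[0,T^\nu]}e^{-\frq(t)}\diff((-x-\un{X}_t)\lor0)+e^{-\frq(T^\nu)}\hat f(x+X_{T^\nu}-((x+\un{X}_{T^\nu})\land0),Y_{T^\nu}).
\end{align}
Its right derivative is computed in two regimes.
\begin{itemize}
\item If $x<-\un{X}_{T^\nu}$, ruin occurs before $T^\nu$. The terminal value $X_{T^\nu}-\un{X}_{T^\nu}$ does not depend on $x$. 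The injection term then contributes slope $\beta e^{-\frq(\sigma_x)}$, where $\sigma_x$ is the first time $x+X$ goes below $0$ (the right derivative picks the appropriate one of $\tau^-_0$ or $\tau^-_{0+}$). Since $\sigma_x$ is non-decreasing in $x$, this slope is non-increasing.
\item If $x\geq-\un{X}_{T^\nu}$, the slope is $e^{-\frq(T^\nu)}\hat f^\prime_+(x+X_{T^\nu},Y_{T^\nu})$. This is non-increasing in $x$ by concavity of $\hat f$, and lies in $[0,e^{-\frq(T^\nu)}]$.
\end{itemize}
At the transition between the two regimes the slope can only drop, because $\beta e^{-\frq(\sigma_x)}\geq\beta e^{-\frq(T^\nu)}>e^{-\frq(T^\nu)}$, using $\sigma_x\leq T^\nu$ and $\beta>1$. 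So $\Phi$ is concave with right derivative in $[0,\beta]$.

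For $x<0$ we have $\sigma_x=0$, giving slope exactly $\beta$. Equivalently, an immediate injection of $-x$ gives $\cV^\nu f(x,y)=\cV^\nu f(0,y)+\beta x$.

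Taking expectations preserves concavity. The slope bounds pass to $\cV^\nu f$ via monotone or dominated convergence of the difference quotients, which are bounded by $\beta$. Measurability in $(x,y)$ follows from the explicit formula.

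\textbf{The bounds in (iii).} Upper bound: I would use $f(u,y^\prime)\leq\bfE_{(u,y^\prime)}[\int e^{-\frq}\diff L^0]$ together with \eqref{periodicMarkov} at $T^\nu$. This bounds $\cV^\nu_{\pi^\nu_{\bfb_f}}f(x,y)$ by the dividends of a strategy in $\un{\Pi}^\nu$ that follows $\pi^\nu_{\bfb_f}$ on $[0,T^\nu]$ and then $L^0$ restarted from $U_{T^\nu}$. Dropping the injection term, Lemma \ref{Lem304}, \eqref{upperbound_by_0strategy}, then bounds this by $\bfE_{(x,y)}[\int e^{-\frq}\diff L^0]$.

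Lower bound at $x=0$: I would take the strategy paying everything down to $0$ at $T^\nu$. Its terminal payoff is $X^0_{T^\nu}+f(0,Y_{T^\nu})\geq f(0,Y_{T^\nu})\geq-\beta\bfE_{(0,Y_{T^\nu})}[\int e^{-\frq}\diff R^0]$. On $[0,T^\nu]$ this strategy coincides with the barrier-$0$ strategy, and after $T^\nu$ the strategy $R^0$ restarts from $(0,Y_{T^\nu})$. So \eqref{periodicMarkov} concatenates the two pieces into exactly $-\beta\bfE_{(0,y)}[\int e^{-\frq}\diff R^0]$.

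\textbf{Main obstacle.} I expect the main obstacle to be the pathwise right-derivative computation. The delicate points are the transition point $x=-\un{X}_{T^\nu}$ (including the distinction between $\tau^-_0$ and $\tau^-_{0+}$), and justifying that right derivatives commute with the expectation.
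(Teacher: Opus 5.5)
Your overall route matches the paper's. Your $\hat f$ is the paper's $\tilde f$ from \eqref{tilde_f}, your representation of $\cV^\nu f$ is \eqref{optimal_b}, and your pathwise right derivative is exactly the integrand of \eqref{Lem306_004}. Your two bounds for (iii) are the paper's comparisons with the barrier-$0$ strategy. The only methodological difference is how concavity is obtained. You prove it pathwise, integrate, and get the derivative of the expectation by monotone or dominated convergence of the difference quotients. The paper instead sandwiches the difference quotients of the expectation in \eqref{Lem306_002}--\eqref{Lem306_003}, passes to the limit using $\tau^{(\varepsilon),-}_{0+}\to\tau^-_0$, and deduces concavity from the monotone right derivative via \cite[Theorem 6.4]{HirLem2001}. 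Your version is slightly more direct and works.

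However, one intermediate claim is wrong, and it sits exactly at the step that carries the hypothesis. Write $\hat f(z,y^\prime)=z+\sup_{w\in[0,z]}\{f(w,y^\prime)-w\}$. The maximiser is $w=z\land\un{\bfb}_f(y^\prime)$, so $\hat f=f$ on $[0,\un{\bfb}_f]$ and $\hat f$ has slope $1$ beyond. Hence $\hat f^\prime_+=f^\prime_+\lor 1\in[1,\beta]$, not $f^\prime_+\land 1\in[0,1]$. In the no-ruin regime the pathwise slope $e^{-\frq(T^\nu)}\hat f^\prime_+(x+X_{T^\nu},Y_{T^\nu})$ therefore lies in $[e^{-\frq(T^\nu)},\beta e^{-\frq(T^\nu)}]$. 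Your transition argument ($\beta e^{-\frq(T^\nu)}>e^{-\frq(T^\nu)}$ because $\beta>1$) does not cover this range. The correct comparison is
\[
\beta e^{-\frq(\sigma_x)}\ \ge\ \beta e^{-\frq(T^\nu)}\ \ge\ e^{-\frq(T^\nu)}\hat f^\prime_+(x^\prime+X_{T^\nu},Y_{T^\nu}),
\]
for $x<-\un{X}_{T^\nu}\le x^\prime$. It uses $\sigma_x\le T^\nu$ on the ruin event, which you have, and $f^\prime_+\le\beta$ on $[0,\infty)$, which is condition (ii) for $f$.

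As written, your proof never uses the bound $f^\prime_+\le\beta$. Yet that bound is precisely the part of the definition of $\Gamma^\nu$ that makes the slope drop, rather than jump up, when passing from the ruin regime to the no-ruin regime. With this correction the rest of your argument goes through unchanged. The pathwise right derivative lies in $[0,\beta]$ and equals $\beta$ for $x<0$, and concavity and (ii) follow as you describe.
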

\begin{proof}
We first define, for $g\in \Gamma^\nu$ and $y \in E$,  
\begin{align}
\tilde{g} (x, y):= &
\begin{cases}
g(x, y), \qquad &x\leq \un{\bfb}_g(y), \\
g(\un{\bfb}_g(y), y)+ (x-\un{\bfb}_g(y)) , \qquad &x> \un{\bfb}_g(y), 
\end{cases}
\\
=&\begin{cases}
g(x, y), \qquad &x\leq \bar{\bfb}_g(y), \\
g(\bar{\bfb}_g(y), y)+ (x-\bar{\bfb}_g(y)) , \qquad &x> \bar{\bfb}_g(y). 
\end{cases}
\label{tilde_f}
\end{align}
We fix $f \in \Gamma^\nu$. 
Note that for $y \in E$, the function $x\mapsto\tilde{f}(x, y)$ is concave and the right derivative $x\mapsto\tilde{f}^\prime_+(x, y)$ 
is equal to $1$ for $x\geq \un{\bfb}_f(y)$ by the definition of $\un{\bfb}_f$. 
By Lemma \ref{Lem303}, we have, for $x \in\bR$, 
\begin{align}
\cV^{\nu}&f(x,y )= \cV^{\nu}_{{\pi^\nu_{\bar{\bfb}_f}}}f (x, y)
\\
&=
\bfE_{(x, y)}\sbra{ e^{-\frq(T^\nu)} L^{{\pi^\nu_{\bar{\bfb}_f}}}_{T^\nu}  -\beta\int_{[0,T^\nu]}e^{-\frq(t)} \diff R^{{\pi^\nu_{\bar{\bfb}_f}}}_t+ e^{-\frq(T^\nu)} f(U^{{\pi^\nu_{\bar{\bfb}_f}}}_{T^\nu}, Y_{T^\nu})}\\
&=
\bfE_{(x, y)}\sbra{   -\beta\int_{[0,T^\nu]}e^{-\frq(t)} \diff ((-\un{X}_t) \lor 0)+ e^{-\frq(T^\nu)} \tilde{f}(X^0_{T^\nu}, Y_{T^\nu})}.
\label{optimal_b}
\end{align}
\par
We confirm that $\cV^{\nu} f$ satisfies condition (iii) for membership in $\Gamma^\nu$. 
We define, for $(x,y)\in\bR\times E$,  
\begin{align}
v^L_0 (x, y):=& \bfE_{(x, y)}\sbra{\int_{[0,\infty) }e^{-\frq(t)} \diff L^{0}_t}
=x\lor0+v^L_{\pi^\nu_0}(0, y),  \\
v^R_0 (x, y):= &\bfE_{(x, y)}\sbra{\int_{[0,\infty) }e^{-\frq(t)} \diff R^{0}_t}, 
\end{align}
where $\pi^\nu_0$ is the MMPCB strategy with barrier $0$. 
The derivative of the map $x\mapsto v^L_0 (x, y)$ on $(0,\infty)$ is equal to $1$.  Hence, by the definition of $\tilde{f}$ and condition (iii) for membership in $\Gamma^\nu$,  
we have 
\begin{align}
v^L_0 (x, y) \geq \tilde{f}(x,y)\geq -\beta v^R_0 (x, y),\qquad (x, y) \in \bR \times E. 
\end{align} 
Therefore, by \eqref{optimal_b}, \eqref{periodicMarkov} at $T^\nu$ and \eqref{upperbound_by_0strategy}, we have, for $(x, y)\in [0,\infty)\times E$, 
 \begin{align}
\cV^{\nu} f (x, y) &\leq \bfE_{(x, y)}\sbra{e^{-\frq(T^\nu)} \tilde{f}(X^0_{T^\nu}, Y_{T^\nu})} \\
&\leq\bfE_{(x, y)}\sbra{e^{-\frq(T^\nu)} v^L_0 (X^0_{T^\nu}, Y_{T^\nu})} 
=v^L_{\pi^\nu_0}(x, y)\leq v^L_0(x, y).
\end{align}
On the other hand, by \eqref{optimal_b}, \eqref{periodicMarkov} at $T^\nu$ and \eqref{upperbound_by_0strategy}, 
\begin{align}
\cV^{\nu} f (0, y)& =
\bfE_{(0, y)}\sbra{   -\beta\int_{[0,T^\nu]}e^{-\frq(t)} \diff ((-\un{X}_t) \lor 0)+ e^{-\frq(T^\nu)} \tilde{f}(X^0_{T^\nu}, Y_{T^\nu})} \\
&\geq 
\bfE_{(0, y)}\sbra{   -\beta\int_{[0,T^\nu]}e^{-\frq(t)} \diff R^{\pi^\nu_0}_t-\beta e^{-\frq(T^\nu)} v^R_0(0, Y_{T^\nu})} 
=-\beta v^R_{\pi^\nu_0}(0,y)
\geq-\beta v^R_0(0, y). 
\end{align}
\par
We confirm condition (i) for membership in $\Gamma^\nu$. 
We compute the right derivative $x\mapsto {(\cV^{\nu}f)}^\prime_+(x, y)$ 
for fixed $y\in E$. 
For $\varepsilon\in\bR$, we define $X^{(\varepsilon)}:=\{X^{(\varepsilon)}_t : t\geq 0\}$ as 
\begin{align}
X^{(\varepsilon)}_t := X_t+\varepsilon, \qquad t\geq 0, 
\end{align}
and define, for measurable function $\bfb$,
\begin{align}
&\ \ \ \tau^{(\varepsilon), -}_0:=\inf\{t>0:X^{(\varepsilon)}_t<0\},\qquad  
\tau^{(\varepsilon), -}_{0+}:=\inf\{t\geq 0:X^{(\varepsilon)}_t\leq0\},
\\
T^{(\varepsilon),+}_\bfb:&=\min\{T^\nu_k>0: X^{(\varepsilon)}_{T^\nu_k}>\bfb{(Y_{T^\nu_k})} \}, 
\qquad 
T^{(\varepsilon),+}_{\bfb-}:=\min\{T^\nu_k>0: X^{(\varepsilon)}_{T^\nu_k}\geq \bfb{(Y_{T^\nu_k})} \}, \\
&\qquad \qquad 
\un{X}^{(\varepsilon)}_t := \inf_{s \in [0, t]}X^{(\varepsilon)}_s,
\qquad 
X^{0,(\varepsilon)}_t:=X^{(\varepsilon)}_t-(\un{X}^{(\varepsilon)}_t \land 0)
\qquad t\geq 0. 
\end{align} 
We also write $L^{\pi^{(\varepsilon)}_{\bfb}}:=\{L^{\pi^{(\varepsilon)}_{\bfb}}_t : t\geq 0\}$, $R^{\pi^{(\varepsilon)}_{\bfb}}:=\{R^{\pi^{(\varepsilon)}_{\bfb}}_t : t\geq 0\}$ and $U^{\pi^{(\varepsilon)}_{\bfb}}:=\{U^{\pi^{(\varepsilon)}_{\bfb}}_t : t\geq 0\}$ for processes representing the dividend, capital injection and controlled processes when the MMPCB strategy with barrier $\bfb$ is applied to $X^{(\varepsilon)}$. 
Then, by \eqref{200_01}, the following assertion ($\sharp$) can be proved by adapting the proof of 
\cite[Appendix C]{Nob2021}, which treats the case of L\'evy processes. 
\begin{itemize}
\item[($\sharp$)]
We fix a measurable function $\bfb$. 
For $\varepsilon>0$ and $(x, y)\in\bR\times E$, we have, $\bfP_{(x, y)}$-a.s.,  
\begin{align}
L^{{\pi}^{(\varepsilon)}_{\bfb}}_t- L^{{\pi}^{(0)}_{\bfb}}_t=0, \ 
R^{{\pi}^{(\varepsilon)}_{\bfb}}_t- R^{{\pi}^{(0)}_{\bfb}}_t=0, \ 
 U^{{\pi}^{(\varepsilon)}_{\bfb}}_t-U^{{\pi}^{(0)}_{\bfb}}_t= \varepsilon, 
\quad t \in [0, {T}^{ (\varepsilon), +}_\bfb \land {\tau}^{ (0), -}_0), 
\end{align}
and the process $t\mapsto  L^{{\pi}^{(\varepsilon)}_{\bfb}}_t-L^{{\pi}^{(0)}_{\bfb}}_t\in[0, \varepsilon]$ is non-decreasing and the processes 
$t\mapsto R^{{\pi}^{(\varepsilon)}_{\bfb}}_t-R^{{\pi}^{(0)}_{\bfb}}_t\in[-\varepsilon, 0]$ and $t\mapsto U^{{\pi}^{(\varepsilon)}_{\bfb}}_t-U^{{\pi}^{(0)}_{\bfb}}_t\in[0,\varepsilon]$ are non-increasing on $(0, \infty)$. 
In particular, if ${T}^{ (0), +}_{\bfb-} \leq {\tau}^{ (0), -}_0$,
\begin{align}
L^{{\pi}^{(\varepsilon)}_{\bfb}}_t- L^{{\pi}^{(0)}_{\bfb}}_t=\varepsilon,
 \quad 
R^{{\pi}^{(\varepsilon)}_{\bfb}}_t- R^{{\pi}^{(0)}_{\bfb}}_t=0,\quad 
 U^{{\pi}^{(\varepsilon)}_{\bfb}}_t-U^{{\pi}^{(0)}_{\bfb}}_t= 0, 
\quad t \in [{T}^{ (0), +}_{\bfb-}, \infty) , 
\end{align}
and if $ {\tau}^{ (\varepsilon), -}_{0+}<{T}^{ (\varepsilon), +}_\bfb$,
\begin{align}
L^{{\pi}^{(\varepsilon)}_{\bfb}}_t- L^{{\pi}^{(0)}_{\bfb}}_t=0,
 \quad 
R^{{\pi}^{(\varepsilon)}_{\bfb}}_t- R^{{\pi}^{(0)}_{\bfb}}_t=-\varepsilon,\quad 
 U^{{\pi}^{(\varepsilon)}_{\bfb}}_t-U^{{\pi}^{(0)}_{\bfb}}_t= 0, 
\quad t \in [ {\tau}^{ (\varepsilon), -}_{0+}, \infty), 
\end{align}
\end{itemize}
Then, for $x \in \bR$ and $\varepsilon > 0$,  we have
\begin{align}
&\cV^{\nu}f(x+\varepsilon ,y )- \cV^{\nu}f(x,y )\\
&\ \ =\bfE_{(x, y)}\Bigg{[} e^{-\frq(T^\nu)} 
\rbra{ \tilde{f}(X^{0,(\varepsilon)}_{T^\nu}, Y_{T^\nu})-\tilde{f}(X^{0,(0)}_{T^\nu}, Y_{T^\nu})}
+{\beta}\int_{[0,T^\nu]}e^{-\frq(t)} \diff 
(((-X_t)\lor 0)\land \varepsilon)
\Bigg{]},\label{Lem306_001}
\end{align}
where we used \eqref{optimal_b}. 
By ($\sharp$), we have
\begin{align}
\bfE_{(x, y)}&\sbra{ e^{-\frq(T^\nu)} 
\rbra{ \tilde{f}(X^{(\varepsilon)}_{T^\nu}, Y_{T^\nu})-\tilde{f}(X^{(0)}_{T^\nu}, Y_{T^\nu})}; \un{X}^{(0)}_{T^\nu}\geq  0
}\\
&\leq
\bfE_{(x, y)}\sbra{ e^{-\frq(T^\nu)} 
\rbra{ \tilde{f}(X^{0,(\varepsilon)}_{T^\nu}, Y_{T^\nu})-\tilde{f}(X^{0,(0)}_{T^\nu}, Y_{T^\nu})}}\\
&\leq\bfE_{(x, y)}\sbra{ e^{-\frq(T^\nu)} 
\rbra{ \tilde{f}(X^{(\varepsilon)}_{T^\nu}, Y_{T^\nu})-\tilde{f}(X^{(0)}_{T^\nu}
, Y_{T^\nu})}; \un{X}^{(\varepsilon)}_{T^\nu}> 0
},
\label{Lem306_002}
\end{align} 
and 
\begin{align}
\beta\varepsilon
\bfE_{(x, y)}\sbra{e^{-\frq(\tau^{(\varepsilon),-}_{0+})};\un{X}^{(\varepsilon)}_{T^\nu}\leq 0}
&\leq 
\bfE_{(x, y)}\sbra{ {\beta}\int_{[0,T^\nu]}e^{-\frq(t)} \diff 
(((-X_t)\lor 0)\land \varepsilon)}
\\
&\leq 
\beta\varepsilon
\bfE_{(x, y)}\sbra{e^{-\frq(\tau^{(0),-}_0)};\un{X}^{(0)}_{T^\nu}< 0
}.
\label{Lem306_003}
\end{align}
Combining \eqref{Lem306_001}, \eqref{Lem306_002} and \eqref{Lem306_003}, and using the fact that $\lim_{\varepsilon\downarrow0}\tau^{(\varepsilon),-}_{0+}=\tau^{(0),-}_0$, $\bfP_{(x, y)}$-a.s., we have,
for $x\in\bR$, 
\begin{align}
{(\cV^{\nu}f)}^\prime_+(x ,y )&:=
\lim_{\varepsilon\downarrow0}
\frac{\cV^{\nu}f(x+\varepsilon ,y )- \cV^{\nu}f(x,y )}{\varepsilon}\\
&=\bfE_{(x, y)}\sbra{ e^{-\frq(T^\nu)} 
 \tilde{f}^\prime_+(X_{T^\nu}, Y_{T^\nu}); \un{X}^{(0)}_{T^\nu}\geq  0
}
 +\beta
\bfE_{(x, y)}\sbra{e^{-\frq(\tau^{-}_0)};\un{X}^{(0)}_{T^\nu}<  0 
}\\
 &=\bfE_{(0, y)}\sbra{ e^{-\frq(T^\nu\land\tau^{(x),-}_0)} 
\rbra{ \tilde{f}^\prime_+(X^{(x)}_{T^\nu}, Y_{T^\nu})1_{\{\un{X}^{(x)}_{T^\nu}\geq  0
\}} 
+\beta1_{\{\un{X}^{(x)}_{T^\nu}<  0
\}}}}.
 \label{Lem306_004}
\end{align} 
Since $x\mapsto e^{-\frq(T^\nu\land\tau^{(x),-}_0)} $ and $x\mapsto  \tilde{f}^\prime_+(X^{(x)}_{T^\nu}, Y_{T^\nu})1_{\{\un{X}^{(x)}_{T^\nu}\geq  0
\}} 
+\beta1_{\{\un{X}^{(x)}_{T^\nu}<  0
\}}$ are non-negative and non-increasing, the right derivative $x\mapsto {(\cV^{\nu}f)}^\prime_+(x ,y )$ 
exists and is also non-increasing. 
It follows from this fact and \cite[Theorem 6.4]{HirLem2001} that the function $x\mapsto \cV^{\nu}  f(x, y)$ is concave. 
\par
By \eqref{Lem306_004} and since 
$\tilde{f}^\prime_+(x, y)\in[1, \beta]$ for $(x, y)\in[0, \infty)\times E$, 
$\cV^\nu f$ satisfies condition (ii) for membership in $\Gamma^\nu$. 
The proof is complete. 
\end{proof}
From Lemma \ref{Lem306}, we can iterate $\cV^{\nu}$ on functions in $\Gamma^\nu$. Thus, we can define, for $n\in\bN$ and $f \in\Gamma^\nu$,
\begin{align}
\cV^{\nu,(n)}f (x, y):=\cV^{\nu}(\cV^{\nu,(n-1)}f) (x, y),\qquad (x, y)\in\bR\times E,
\end{align}
inductively, where $\cV^{\nu, (0)}f(x, y):=f(x, y)$ for $(x, y)\in\bR\times E$. Note that $\cV^{\nu, (1)}f(x, y)=\cV^{\nu}f(x, y)$ for $(x, y)\in\bR\times E$. 
\par
We define the measurable function 
\begin{align}
v_0(x, y):=-\beta\bfE_{(x, y)}\sbra{\int_{[0, \infty)} e^{-\frq(t)} \diff ((-\un{X}_t) \lor 0) },\qquad (x, y)\in \bR\times E.
\end{align}
Then, the function $v_0$ belongs to $\Gamma^\nu$. 
Indeed, $v_0$ satisfies condition (iii) for membership in $\Gamma^\nu$ by \eqref{upperbound_by_0strategy}. In addition, by considering the case $T^{\nu} = \infty$ in the proof of Lemma \ref{Lem306}, we immediately see that the right derivative $v^\prime_{0+}$ of $x \mapsto v_0(x, y)$ coincides with $x\mapsto \beta\bfE_{(x,y)}\sbra{e^{-\frq(\tau^-_0)}}$ for $y\in E$, and $v_0$ satisfies the remaining conditions for membership in $\Gamma^\nu$. 
For a strategy $\pi\in\un{\Pi}^\nu$ and $n\in\bN$, we define the strategy $\pi_n\in \un{\Pi}^\nu$ as 
\begin{align}
L^{\pi_n}_t=
L^{\pi}_{t\land T^\nu_n}
,\qquad t\geq 0. 
\label{def_pi_n}
\end{align}
\begin{Lem}\label{Lem307}
For any strategy $\pi\in\un{\Pi}^\nu$ and $n\in\bN$, we have
\begin{align}
v_{\pi_n}(x, y)\leq \cV^{\nu, (n)} {v_0}(x, y)\leq V^{\nu}(x, y),\qquad (x, y)\in\bR\times E. \label{inq_of_NPV}
\end{align} 
\end{Lem}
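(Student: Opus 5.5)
We prove both inequalities in \eqref{inq_of_NPV} by induction on $n$, using the periodic Markov property \eqref{periodicMarkov} at $T^\nu=T^\nu_1$ to peel off the first dividend interval.

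\textbf{First inequality.} For a fixed $\pi\in\un{\Pi}^\nu$, write $v_{\pi_n}$ as the sum of two contributions. The first is the reward collected on $[0,T^\nu]$, namely
\begin{align}
e^{-\frq(T^\nu)}L^{\pi}_{T^\nu}-\beta\int_{[0,T^\nu]}e^{-\frq(t)}\diff R^{\pi}_t ,
\end{align}
where we use that dividends only occur at the $T^\nu_k$ and that $L^{\pi_n}=L^\pi$ up to $T^\nu_n$. The second is the discounted continuation after $T^\nu$. Conditionally on $\cF_{T^\nu}$, the continuation is the NPV of a strategy that starts from $(U^\pi_{T^\nu},Y_{T^\nu})$ and pays dividends only at the next $n-1$ opportunities.

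The delicate point is that this shifted strategy depends on the past path. We handle it by a regular conditional distribution argument: for fixed past $\omega$, the map $\omega'\mapsto L^\pi_{T^\nu(\omega)+\cdot}(\omega\otimes\omega')-L^\pi_{T^\nu(\omega)}(\omega)$ defines an element of $\un{\Pi}^\nu$ stopped at its $(n-1)$-th opportunity. Alternatively, we can bound the continuation directly by the supremum over such strategies. The induction hypothesis, applied to strategies of the form $\pi'_{n-1}$, bounds the continuation by $\cV^{\nu,(n-1)}v_0(U^\pi_{T^\nu},Y_{T^\nu})$. This yields
\begin{align}
v_{\pi_n}\le\cV^\nu_\pi\bigl(\cV^{\nu,(n-1)}v_0\bigr)\le\cV^{\nu,(n)}v_0 .
\end{align}

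For the base case $n=0$, the strategy $\pi_0$ pays no dividends and, by \eqref{300_01}, injects exactly $(-\un{X})\lor0$. Hence $v_{\pi_0}=v_0$, and the claim is an equality.

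Measurability of the shifted strategy and the integrability needed to split expectations come from Lemma \ref{Lem304}. Membership of each iterate $\cV^{\nu,(n-1)}v_0$ in $\Gamma^\nu$ follows from Lemma \ref{Lem306}, so every iterate is well defined.

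\textbf{Second inequality.} Here we construct a strategy. By Lemma \ref{Lem303}, applied successively to $f_k:=\cV^{\nu,(k)}v_0\in\Gamma^\nu$, the supremum defining $\cV^\nu f_k$ is attained by the MMPCB strategy with barrier $\bar{\bfb}_{f_k}$. We define $\pi^{(n)}\in\un{\Pi}^\nu$ as follows:
\begin{itemize}
\item on the $k$-th inter-dividend interval $(T^\nu_{k-1},T^\nu_k]$, with $k=1,\dots,n$, it uses barrier $\bar{\bfb}_{f_{n-k}}$ at time $T^\nu_k$;
\item after $T^\nu_n$ it pays no dividends, with capital injections given by \eqref{300_01}.
\end{itemize}
Applying \eqref{periodicMarkov} at $T^\nu_{n-1},\dots,T^\nu_1$ and inducting backwards gives $v_{\pi^{(n)}}=\cV^{\nu,(n)}v_0$. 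Since $\pi^{(n)}\in\un{\Pi}^\nu\subset\Pi^\nu$ by Lemma \ref{Lem304}, this gives $\cV^{\nu,(n)}v_0\le V^\nu$. The construction is itself an induction: $\pi^{(n)}$ is $\pi^\nu_{\bar{\bfb}_{f_{n-1}}}$ up to $T^\nu$, concatenated via $\theta_{T^\nu}$ with $\pi^{(n-1)}$.

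\textbf{Main obstacle.} The hardest step is the conditioning in the first inequality. A general $\pi\in\un{\Pi}^\nu$ is only progressively measurable, so its restriction after $T^\nu$ is not literally $\pi'\circ\theta_{T^\nu}$ for a single admissible strategy $\pi'$.

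To handle this, we will first try to avoid invoking $V^\nu$-type suprema at the random state. Instead, we prove by induction the pathwise-conditional statement
\begin{align}
&\bfE_{(x,y)}\Bigl[\int_{(T^\nu_k,\infty)}e^{-\frq(t)}\diff L^{\pi_n}_t-\beta\int_{(T^\nu_k,\infty)}e^{-\frq(t)}\diff R^{\pi_n}_t\,\Big|\,\cF_{T^\nu_k}\Bigr]\\
&\qquad\le e^{-\frq(T^\nu_k)}\,\cV^{\nu,(n-k)}v_0\bigl(U^{\pi}_{T^\nu_k},Y_{T^\nu_k}\bigr).
\end{align}
We go backwards from $k=n$, where the bound is an equality with $v_0$ by the strong Markov property.

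The inductive step, from $k$ to $k-1$, is the pathwise comparison used in the proof of Lemma \ref{Lem303}. The capital injections on the interval are determined by the running infimum, and the payout $\ell$ at $T^\nu_k$ satisfies $\ell+f(z-\ell,\cdot)\le\sup_{\ell'}\bigl(\ell'+f(z-\ell',\cdot)\bigr)$. This pathwise bound, combined with \eqref{periodicMarkov}, yields the $\cV^\nu$-bound without needing admissibility of the shifted strategy.
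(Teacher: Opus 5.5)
Your proposal is correct and is essentially the paper's proof. The second inequality comes from exhibiting the $n$-step barrier strategy whose value equals $\cV^{\nu,(n)}v_0$; you use the barriers $\bar{\bfb}_{f_{n-k}}$ where the paper uses $\un{\bfb}_{\cV^{\nu,(n-k)}v_0}$, and both are allowed by Lemma \ref{Lem303}. The first inequality comes from a backward induction at the times $T^\nu_{n-k}$, combining the pathwise comparison from the proof of Lemma \ref{Lem303} with \eqref{periodicMarkov}. That backward induction is exactly the route in your ``Main obstacle'' paragraph, so you can simply drop the forward shifted-strategy argument, whose measurability Lemma \ref{Lem304} does not provide.
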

\begin{proof}
We prove the second inequality of \eqref{inq_of_NPV}. 
By the definition of $\cV^{\nu, (n)} $, Lemma \ref{Lem303} and \eqref{periodicMarkov} at each $T^\nu_k$ with $k\in\{1, 2, \dots, n\}$, we have   
\begin{align}
\cV^{\nu, (n)} {v_0}(x, y) = v_{\pi^\ast_n}(x, y),\qquad (x, y)\in\bR\times E, \label{Lem307_003}
\end{align}
where 
$\pi^\ast_n\in\un{\Pi}^\nu$ 
is a strategy defined analogously to ($\star$) in Section \ref{SubSec203}, where at each time $T^\nu_k$, a dividend is paid corresponding to the excess over $\un{\bfb}_{\cV^{\nu,(n-k)}v_0}(Y_{T^\nu_k})$, for $k = 1, \dots, n$. 
Thus, by the definition of $V^\nu$, the second inequality of \eqref{inq_of_NPV} holds.
\par
We prove the first inequality of \eqref{inq_of_NPV}. 
We prove by induction that, for $k\in \{ 0, 1, \dots, n\}$, the following holds
\begin{align}
v_{\pi_n}(x, y)\leq \bfE_{(x, y)}\Bigg{[}\int_{[0,T^\nu_{n-k}]}e^{-\frq(t)} \diff (L^{\pi_n}_t
&-\beta R^{\pi_n}_t)
+e^{-\frq(T^\nu_{n-k})}\cV^{\nu, (k)}{v_0}(U^{\pi_n}_{T^\nu_{n-k}}, Y_{T^\nu_{n-k}})\Bigg{]}, 
\label{induction_inequality}
\end{align}
for $ (x, y)\in\bR\times E$. 
We have \eqref{induction_inequality} with equality in place of  $\leq$ when $k=0$ by \eqref{periodicMarkov} at $T^\nu_n$. 
We assume that $n\geq 2$ and \eqref{induction_inequality} holds for $k=l \in\{0, 1, \dots, n-1\}$ and prove it for $k=l+1$. 
By \eqref{induction_inequality} with $k=l$, 
we have 
\begin{align}
v_{\pi_n}(x, y)\leq\bfE_{(x, y)}\Bigg{[}\int_{[0,T^\nu_{n-l-1}]}e^{-\frq(t)} \diff L^{\pi_n}_t&+ e^{-\frq(T^\nu_{n-l})} \Delta L^{\pi_n}_{T^\nu_{n-l}}
\\
-\beta\int_{[0,T^\nu_{n-l}]}e^{-\frq(t)} \diff R^{\pi_n}_t
&+e^{-\frq(T^\nu_{n-l})}\cV^{\nu, (l)}{v_0}(U^{\pi_n}_{T^\nu_{n-l}}, Y_{T^\nu_{n-l}})\Bigg{]}
,
\label{Lem307_001}
\end{align}
for $(x,y)\in\bR\times E$.  
As in the proof of \eqref{303_01}, we have 
\begin{align}
\cV^{\nu, (l)}{v_0}(U^{\pi_n}_{(n-l-)}\land \un{\bfb}_{\cV^{\nu, (l)}v_0}(Y_{T^\nu_{n-l}}), Y_{T^\nu_{n-l}}) 
+&\left(U^{\pi_n}_{(n-l-)}-\un{\bfb}_{\cV^{\nu, (l)}v_0}(Y_{T^\nu_{n-l}})\right)\lor 0\\
&\geq
\Delta L^{\pi_n}_{T^\nu_{n-l}}+\cV^{\nu, (l)} {v_0}(U^{\pi_n}_{T^\nu_{n-l}}, Y_{T^\nu_{n-l}}). 
\label{Lem307_002}
\end{align} 
Thus, we have 
\begin{align}
&v_{\pi_n}(x, y)\leq\bfE_{(x, y)}\Bigg{[}\int_{[0,T^\nu_{n-l-1}]}e^{-\frq(t)} \diff L^{\pi_n}_t 
-\beta\int_{[0,T^\nu_{n-l}]}e^{-\frq(t)} \diff R^{\pi_n}_t+e^{-\frq(T^\nu_{n-l})}\\
&\quad\times\left(\left(U^{\pi_n}_{(n-l-)}-\un{\bfb}_{\cV^{\nu, (l)}v_0}(Y_{T^\nu_{n-l}})\right)\lor 0\right)
+e^{-\frq(T^\nu_{n-l})} \cV^{\nu, (l)}{v_0}(U^{\pi_n}_{(n-l-)}\land \un{\bfb}_{\cV^{\nu, (l)}v_0}(Y_{T^\nu_{n-l}}), Y_{T^\nu_{n-l}})\Bigg{]}\\
&=\bfE_{(x, y)}\sbra{\int_{[0,T^\nu_{n-l-1}]}e^{-\frq(t)} \diff (L^{\pi_n}_t
-\beta R^{\pi_n}_t) 
+e^{-\frq(T^\nu_{n-l-1})}\cV^{\nu, (l+1)}{v_0}(U^{\pi_n}_{T^\nu_{n-l-1}}, Y_{T^\nu_{n-l-1}})},
\end{align}
where the inequality follows from \eqref{Lem307_001} and \eqref{Lem307_002}, and the equality follows from \eqref{periodicMarkov} at $T^\nu_{n-l-1}$.
Therefore, we obtain \eqref{induction_inequality} with $k=l+1$.
Since \eqref{induction_inequality} with $k=n$ is equivalent to the first inequality of \eqref{inq_of_NPV}, 
the proof is complete. 
\end{proof}
\begin{Lem}\label{Lem308}
For a strategy $\pi\in\un{\Pi}^\nu$, we have
\begin{align}
\lim_{n\to\infty}\absol{v_\pi(x, y)-v_{\pi_n}(x, y)}=0,
\qquad (x, y)\in\bR\times E. 
\end{align}
\end{Lem}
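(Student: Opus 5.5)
Write $\pi_n$ as in \eqref{def_pi_n}. Since $L^{\pi_n}_t=L^\pi_{t\land T^\nu_n}$, the dividend components of $\pi$ and $\pi_n$ coincide on $[0,T^\nu_n]$. Because both strategies belong to $\un{\Pi}^\nu$, their capital injections are determined through the second identity of \eqref{300_01}. Hence $R^{\pi_n}_t=R^\pi_t$ and $U^{\pi_n}_t=U^\pi_t$ for $t<T^\nu_n$, and the same holds at $t=T^\nu_n$.

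The difference of the NPVs therefore only involves the contributions after $T^\nu_n$. We bound
\begin{align}
\absol{v_\pi(x,y)-v_{\pi_n}(x,y)}\leq
\bfE_{(x,y)}\sbra{\int_{(T^\nu_n,\infty)}e^{-\frq(t)}\diff (L^\pi_t+L^{\pi_n}_t)}
+\beta\bfE_{(x,y)}\sbra{\int_{(T^\nu_n,\infty)}e^{-\frq(t)}\diff (R^\pi_t+R^{\pi_n}_t)}.
\end{align}
Note that $L^{\pi_n}$ has no increments after $T^\nu_n$.

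The plan is to dominate each of these tail integrals by a quantity built from the reference strategy $0$ of Lemma \ref{Lem304}, restarted at $T^\nu_n$. After $T^\nu_n$, both $\pi$ and $\pi_n$ act from the state $(U^\pi_{T^\nu_n},Y_{T^\nu_n})$ with $U^\pi_{T^\nu_n}\ge 0$. The shifted processes $(L_{T^\nu_n+\cdot}-L_{T^\nu_n},R_{T^\nu_n+\cdot}-R_{T^\nu_n})$ again satisfy \eqref{300_01} relative to the shifted MAP. Applying the pathwise bound \eqref{upperbound_by_0strategy} to the shifted path (it is pathwise, so it applies after shifting by $\theta_{T^\nu_n}$), each tail integral is at most
\begin{align}
e^{-\frq(T^\nu_n)}\rbra{\int_{[0,\infty)}e^{-\frq(t)}\diff L^0_t+\int_{[0,\infty)}e^{-\frq(t)}\diff R^0_t}\circ\theta_{T^\nu_n}.
\end{align}
Here the strategy $0$ is started from $U^\pi_{T^\nu_n}$ (respectively $U^{\pi_n}_{T^\nu_n}$). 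Using \eqref{periodicMarkov} at $T^\nu_n$ together with \eqref{finiteness}, the tail is bounded by
\begin{align}
C\,\bfE_{(x,y)}\sbra{e^{-\frq(T^\nu_n)}\rbra{U^\pi_{T^\nu_n}+2B^\nu}}
\end{align}
for a constant $C$ depending only on $\beta$.

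It remains to show that this tends to $0$ as $n\to\infty$. First, since $\bfq>q$, we have $e^{-\frq(T^\nu_n)}\le e^{-qT^\nu_n}$, and $\bfE[e^{-qT^\nu_n}]=(\bfE[e^{-qr_1}])^n\to0$. This disposes of the $B^\nu$ term. For the $U^\pi_{T^\nu_n}$ term, I would use $U^\pi_{T^\nu_n}\le X^0_{T^\nu_n}$ when $\pi\in\un{\Pi}^\nu$, since dividends only decrease the controlled process and the injections are minimal. Moreover, $X^0_t\le x\lor 0+\sup_{s\le t}X_s-\inf_{s\le t}X_s\land 0$. Then $\bfE_{(x,y)}[e^{-qT^\nu_n}\sup_{s\le T^\nu_n}|X_s-x|]$ can be controlled by splitting time into unit intervals $[k,k+1]$. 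Assumption \eqref{finite_ass} and the additive property \eqref{200_01} with the Markov property give $\bfE_{(x,y)}[\sup_{s\le k+1}|X_s-x|]\le (k+1)M$. Hence
\begin{align}
\bfE_{(x,y)}\sbra{e^{-qT^\nu_n}\sup_{s\le T^\nu_n}|X_s-x|}\le M\sum_{k\ge0}(k+1)\bfE\sbra{e^{-qk}1_{\{T^\nu_n\in[k,k+1)\}}}.
\end{align}
This uses the independence of $\bfr$ from $(X,Y)$. The right-hand side tends to $0$ by dominated convergence, since $T^\nu_n\to\infty$ a.s. and the series is dominated by $\sum_k(k+1)e^{-qk}<\infty$.

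The main obstacle I expect is this last step. Specifically, it is making the restart/shift argument rigorous with the random $T^\nu_n$, and justifying the bound $U^\pi_{T^\nu_n}\le X^0_{T^\nu_n}$ in the presence of the first condition of \eqref{300_01}. If the shift formulation is awkward, I would instead bound the tails directly pathwise. For the capital-injection tail, $R^\pi$ increases after $T^\nu_n$ only when $X$ drops below its previous minimum minus accumulated dividends; this is at most the increase of $(-\inf_{s\le t}(X_s-X_{T^\nu_n}))$ on $[T^\nu_n,t]$. For the dividend tail, dividends paid after $T^\nu_n$ are bounded by $U^\pi_{T^\nu_n}$ plus the running supremum increments of $X$ after $T^\nu_n$. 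Both are then integrated against $e^{-qt}$ using the same unit-interval estimate.
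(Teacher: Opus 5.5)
Your proof is correct, but it takes a different route from the paper's.

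\textbf{The paper's route.} The paper does no restart at $T^\nu_n$. It observes that $v_\pi-v_{\pi_n}$ consists only of the tails over $[T^\nu_{n+1},\infty)$ of $\int e^{-\frq(t)}\diff L^\pi_t$, $\int e^{-\frq(t)}\diff R^\pi_t$ and $\int e^{-\frq(t)}\diff R^{\pi_n}_t$.
\begin{itemize}
\item For the fixed strategy $\pi$, the first two tails vanish by dominated convergence. The full integrals are integrable by Lemma \ref{Lem304} and $T^\nu_{n+1}\to\infty$.
\item For the $n$-dependent term, the tail of $R^{\pi_n}$ is dominated by the tail of $R^0$, and dominated convergence is applied again. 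This works because the comparison behind Lemma \ref{Lem304} holds increment by increment for capital injections.
\end{itemize}
This argument is purely qualitative. It needs no shift formalism and no use of \eqref{finite_ass} beyond what is already inside Lemma \ref{Lem304}.

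\textbf{Your route.} You compare the post-$T^\nu_n$ parts with the reference strategy restarted from $U^\pi_{T^\nu_n}$. This correctly handles the dividend tail, where increment-wise domination by $L^0$ fails and only the cumulative comparison is available. You then reduce everything to showing $\bfE_{(x,y)}[e^{-\frq(T^\nu_n)}(U^\pi_{T^\nu_n}+2B^\nu)]\to0$. That is essentially the estimate \eqref{limsup_result}, which the paper proves later in the proof of Lemma \ref{Lem311}.

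The restart step is sound, and it needs three ingredients:
\begin{itemize}
\item the proof of \eqref{upperbound_by_0strategy} is pathwise, so it applies to the shifted path;
\item the reflection identity in \eqref{300_01} restarts exactly at $T^\nu_n$;
\item a version of \eqref{periodicMarkov} with an $\cF_{T^\nu_n}$-measurable starting level, together with path-level translation invariance (the paper uses the latter freely, e.g.\ in ($\sharp$)).
\end{itemize}
The inequality $U^\pi_{T^\nu_n}\le X^0_{T^\nu_n}$ you were worried about uses only the second identity in \eqref{300_01} and the fact that $L^\pi$ is non-negative and non-decreasing. The first condition plays no role.

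\textbf{What each approach buys.} The paper's argument is shorter and lighter on formalism. Yours gives more: since $U^\pi_{T^\nu_n}\le X^0_{T^\nu_n}$, your bound does not depend on $\pi$. You therefore actually get
\begin{align}
\sup_{\pi\in\un{\Pi}^\nu}\absol{v_\pi(x,y)-v_{\pi_n}(x,y)}\to0 .
\end{align}
With this uniformity, Lemma \ref{Lem309} would not need the $\varepsilon$-optimal strategies.
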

Since the proof of this lemma is easy, we include it in Appendix \ref{lemma_proof} of the Supplementary Material. 
\begin{Lem}\label{Lem309}
We have
\begin{align}
\lim_{n\to\infty}\cV^{\nu, (n)} {v_0}(x, y)= V^{\nu}(x, y),\qquad (x, y)\in\bR\times E. 
\label{Lem309_statement}
\end{align}
In addition, the function $V^{\nu}$ belongs to $\Gamma^\nu$. 
\end{Lem}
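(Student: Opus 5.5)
The plan is to sandwich $V^\nu$ between $\lim_n \cV^{\nu,(n)}v_0$ from below and above, using Lemmas \ref{Lem304a}, \ref{Lem307} and \ref{Lem308}, and then deduce $V^\nu\in\Gamma^\nu$ by passing the defining properties of $\Gamma^\nu$ to the limit.

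\textbf{Monotonicity and the upper bound.} First I show that $n\mapsto \cV^{\nu,(n)}v_0(x,y)$ is non-decreasing. From the identity \eqref{Lem307_003}, $\cV^{\nu,(n)}v_0=v_{\pi^\ast_n}$. Applying the first inequality of Lemma \ref{Lem307} with $\pi=\pi^\ast_n$ and index $n+1$ gives
\[
\cV^{\nu,(n)}v_0 = v_{(\pi^\ast_n)_{n+1}}\le \cV^{\nu,(n+1)}v_0 ,
\]
because $(\pi^\ast_n)_{n+1}=\pi^\ast_n$: no dividends are paid after $T^\nu_n$. Hence the limit $W(x,y):=\lim_n\cV^{\nu,(n)}v_0(x,y)$ exists in $(-\infty,\infty]$. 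By the second inequality of Lemma \ref{Lem307}, $W\le V^\nu$.

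\textbf{Lower bound and the limit.} For the reverse inequality, fix $\pi\in\un{\Pi}^\nu$. Lemma \ref{Lem307} gives $v_{\pi_n}\le\cV^{\nu,(n)}v_0$. Letting $n\to\infty$ and using Lemma \ref{Lem308} yields $v_\pi\le W$. Taking the supremum over $\pi\in\un{\Pi}^\nu$ and invoking Lemma \ref{Lem304a} gives $V^\nu\le W$. Thus $W=V^\nu$, which is \eqref{Lem309_statement}. Finiteness of $V^\nu$ follows from Lemma \ref{Lem304}.

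\textbf{Membership $V^\nu\in\Gamma^\nu$.} Each $f_n:=\cV^{\nu,(n)}v_0$ lies in $\Gamma^\nu$ by Lemma \ref{Lem306}, and $f_n\uparrow V^\nu$ pointwise. I check the three conditions in turn.
\begin{enumerate}
\item Concavity in $x$ passes to pointwise limits.
\item Condition (iii) passes to the limit trivially, since the bounds do not depend on $n$.
\item Condition (ii) is the main obstacle, because pointwise convergence of concave functions does not directly give convergence of right derivatives.
\end{enumerate}

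For condition (ii) I argue through difference quotients rather than derivatives. For $x<x'$, each $f_n$ has right derivative $\beta$ on $(-\infty,0)$ and in $[0,\beta]$ on $[0,\infty)$, and is concave. Therefore
\[
f_n(x')-f_n(x)=\beta(x'-x)\quad \text{for } x<x'\le 0,
\qquad
0\le f_n(x')-f_n(x)\le\beta(x'-x)\quad \text{for } 0\le x<x'.
\]
These relations are preserved in the limit. Since $V^\nu$ is concave and finite, its right derivative exists and equals $\lim_{\varepsilon\downarrow0}(V^\nu(x+\varepsilon,y)-V^\nu(x,y))/\varepsilon$. This right derivative equals $\beta$ for $x<0$ and lies in $[0,\beta]$ for $x\ge0$.

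The only delicate point is measurability of $V^\nu$ on $\bR\times E$. This follows because $V^\nu$ is a pointwise limit of the measurable functions $f_n$.
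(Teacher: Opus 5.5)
Your proposal is correct and follows essentially the same route as the paper. The upper bound comes from the second inequality of Lemma \ref{Lem307}, and the lower bound from $v_{\pi_n}\le\cV^{\nu,(n)}v_0$, Lemma \ref{Lem308} and Lemma \ref{Lem304a}. The paper phrases the lower bound with an $\varepsilon$-optimal strategy and $\liminf$/$\limsup$ rather than your monotonicity step, which is correct but not needed. The only real variation is condition (ii): you pass the increment relations $f_n(x')-f_n(x)=\beta(x'-x)$ and $0\le f_n(x')-f_n(x)\le\beta(x'-x)$ to the limit by hand, whereas the paper cites a convergence theorem for derivatives of concave functions \cite[Theorem 1.1]{Lac1982}. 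Your version is more elementary and also makes the measurability of $V^\nu$ explicit.
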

\begin{proof}
By \eqref{inq_of_NPV}, we have 
\begin{align}
\limsup_{n\to\infty}\cV^{\nu, (n)} {v_0}(x, y)\leq  V^{\nu}(x, y),\qquad (x, y)\in\bR\times E. \label{Lem309_004}
\end{align}
For $(x, y)\in\bR\times E$ and $\varepsilon>0$, we may choose the strategy $\pi^{(x, y),\varepsilon}\in\un{\Pi}^\nu$ such that 
\begin{align}
0\leq V^{\nu}(x, y)-v_{\pi^{(x, y),\varepsilon}}(x, y)\leq\varepsilon. 
\label{Lem309_001}
\end{align}
For $n\in\bN$, $(x, y)\in\bR\times E$ and $\varepsilon>0$, we have 
\begin{align}
V^{\nu}(x, y)-\cV^{\nu, (n)} {v_0}(x, y)\leq &V^{\nu}(x, y)-v_{\pi^{(x, y),\varepsilon}_n}(x, y)\\
\leq&V^{\nu}(x, y)-v_{\pi^{(x, y),\varepsilon}}(x, y)
+|v_{\pi^{(x, y),\varepsilon}}(x, y)-v_{\pi^{(x, y),\varepsilon}_n}(x, y)|
\\
\leq&\varepsilon
+|v_{\pi^{(x, y),\varepsilon}}(x, y)-v_{\pi^{(x, y),\varepsilon}_n}(x, y)|, 
\label{Lem309_002}
\end{align}
where the first inequality follows from Lemma \ref{Lem307} and the last inequality follows from \eqref{Lem309_001}. 
By taking the limit of \eqref{Lem309_002} as $n\to \infty$ and by Lemma \ref{Lem308}, we have
\begin{align}
V^{\nu}(x, y)-\liminf_{n\to\infty}\cV^{\nu, (n)} {v_0}(x, y)
\leq\varepsilon. \label{Lem309_003}
\end{align}
Since \eqref{Lem309_003} holds for any $\varepsilon>0$ and $(x,y)\in\bR\times E$, we have 
\begin{align}
V^{\nu}(x, y)-\liminf_{n\to\infty}\cV^{\nu, (n)} {v_0}(x, y)
\leq0, \qquad (x, y)\in\bR\times E. \label{Lem309_005}
\end{align}
By \eqref{Lem309_004} and \eqref{Lem309_005}, we obtain \eqref{Lem309_statement}. 
Since $\cV^{\nu, (n)} {v_0} \in \Gamma^\nu$ for $n\in\bN$, it follows from 
\eqref{Lem309_statement} and \cite[Theorem B.3.1.4]{HirLem2001} that $V^\nu$ satisfies conditions (i) and (iii) for membership in $\Gamma^\nu$. 
Using also \cite[Theorem 1.1]{Lac1982}, we see that $V^\nu$ satisfies condition (ii) for membership in $\Gamma^\nu$. 
The proof is complete. 
\end{proof}
The next lemma, which can be derived using the lemmas established so far, completes \textbf{Step 2}. 
From Lemma \ref{Lem309}, we can define the non-negative measurable functions $\un{\bfb}_{V^{\nu}} $ and $\bar{\bfb}_{V^{\nu}} $ on $E$. 
Let $\Pi^{\nu,\ast}_{V^\nu}$ be the set of strategies $\pi$ obtained from the definition of $\Pi^{\nu,\ast}$ by replacing $\un{\bfb}^\nu$ with $\un{\bfb}_{V^{\nu}} $ and $\bar{\bfb}^\nu$ with $\bar{\bfb}_{V^{\nu}} $. 
Let $\Xi^\nu_{V^\nu}$ and $\hat{\Xi}^\nu_{V^\nu}$ be the sets of non-negative measurable functions $\bfb$ on $E$ satisfying 
$E^\bfb_{V^\nu}= E$ and $m_y (E\backslash E^\bfb_{V^\nu})=0$ for all $y\in E$, respectively, where 
$E^\bfb_{V^\nu}:=\cbra{y \in E: \un{\bfb}_{V^{\nu}} (y)\leq\bfb(y) \leq \bar{\bfb}_{V^{\nu}} (y)}$.
\begin{Lem}\label{Lem311}
The strategy $\pi\in\un{\Pi}^\nu$ is optimal if and only if it belongs to $\Pi^{\nu,\ast}_{V^\nu}$.
\end{Lem}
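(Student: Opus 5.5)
The plan is to show that $V^\nu$ is a fixed point of $\cV^\nu$, and then read off optimality through a ``one-step defect'' decomposition along the dividend times.

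\emph{Fixed point.} Since $V^\nu\in\Gamma^\nu$ by Lemma \ref{Lem309}, $\cV^\nu V^\nu$ is well defined. Using \eqref{periodicMarkov} at $T^\nu$ and splitting any $\pi\in\un{\Pi}^\nu$ into its behaviour on $[0,T^\nu]$ and a shifted strategy afterwards, I expect to obtain $V^\nu\leq\cV^\nu V^\nu$. For the reverse inequality, take an $\varepsilon$-optimal strategy after $T^\nu$, chosen measurably in the starting point. This can be avoided by instead using the monotone limit $\cV^{\nu,(n)}v_0\to V^\nu$ of Lemma \ref{Lem309}. By Lemma \ref{Lem303}, $\cV^{\nu}f$ is an expectation of $\tilde f(X^0_{T^\nu},Y_{T^\nu})$ minus the fixed injection cost \eqref{303_02}, and $\tilde f$ is continuous in $f$ under pointwise convergence. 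Passing to the limit with dominated convergence, using the bounds of Lemma \ref{Lem304} and condition (iii), then gives $\cV^\nu V^\nu=V^\nu$. The same representation gives $V^\nu=\cV^\nu_{\pi^\nu_\bfb}V^\nu$ for barriers $\bfb$ in $[\un{\bfb}_{V^\nu},\bar{\bfb}_{V^\nu}]$.

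\emph{Defect identity.} Fix $\pi\in\un{\Pi}^\nu$. At each $T^\nu_k$, define the one-step loss
\begin{align}
D_k:=V^\nu(U^\pi_{(k-)},Y_{T^\nu_k})-\Delta L^\pi_{T^\nu_k}-V^\nu(U^\pi_{T^\nu_k},Y_{T^\nu_k})\geq 0 .
\end{align}
Nonnegativity follows from concavity of $V^\nu$ with slope bounds as in \eqref{ineqg}. Writing $g(l)=l+V^\nu(z-l,y)$, we have $g$ maximal exactly when $z-l\in[\un{\bfb}_{V^\nu}(y),\bar{\bfb}_{V^\nu}(y)]$ if $z>\un{\bfb}_{V^\nu}(y)$, and exactly at $l=0$ if $z\le\un{\bfb}_{V^\nu}(y)$. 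This uses \eqref{Lem305_001}: $g$ is strictly decreasing on $[z-\un{\bfb},z]$ and strictly increasing before $z-\bar{\bfb}$. Hence $D_k=0$ if and only if condition (i) of $\Pi^{\nu,\ast}_{V^\nu}$ holds at $T^\nu_k$.

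Iterating the fixed point relation with \eqref{periodicMarkov}, exactly as in the induction of Lemma \ref{Lem307}, and noting that between dividend dates the injection part of $\pi\in\un{\Pi}^\nu$ coincides with that used in $\cV^\nu$, I expect
\begin{align}
v_{\pi_n}(x,y)=V^\nu(x,y)-\bfE_{(x,y)}\Big[\sum_{k=1}^n e^{-\frq(T^\nu_k)}D_k\Big]-\bfE_{(x,y)}\big[e^{-\frq(T^\nu_n)}V^\nu(U^\pi_{T^\nu_n},Y_{T^\nu_n})\big]+\text{(error)} .
\end{align}
The last term vanishes as $n\to\infty$ by the bounds in Lemma \ref{Lem304} and condition (iii), since $|V^\nu(x,y)|\le (x\vee0)+C$ and $e^{-\frq(T^\nu_n)}$ decays because $\bfq>q$. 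Combined with Lemma \ref{Lem308}, this yields
\begin{align}
v_\pi=V^\nu-\bfE\Big[\sum_k e^{-\frq(T^\nu_k)}D_k\Big].
\end{align}

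\emph{Conclusion.} Optimality of $\pi$ is then equivalent to $D_k=0$ a.s. for all $k$ and all starting points, which is condition (i). Condition (ii) holds automatically in $\un{\Pi}^\nu$.

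The main obstacle is controlling the tail term rigorously. It requires uniform integrability of $V^\nu(U^\pi_{T^\nu_n},\cdot)$, which I would obtain from Lemma \ref{Lem304} via $U^\pi\le X^0$-type domination.
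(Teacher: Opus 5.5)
\textbf{The defect $D_k$ is misdefined.} As written, your two claims about it are false. The first dividend date is $T^\nu_1=r_1>0$, so $V^\nu(z,y)$ is the value of a state at which no dividend can be paid immediately. With $g(l)=l+V^\nu(z-l,y)$, it equals $g(0)$, not $\max_{l\in[0,z]}g(l)$.

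The maximum is $\tilde{V}^\nu(z,y)$ from \eqref{tilde_f}, and $\tilde{V}^\nu(z,y)>V^\nu(z,y)$ whenever $z>\bar{\bfb}_{V^\nu}(y)$. The reason is that $V^{\nu\prime}_+(\cdot,y)<1$ on $(\bar{\bfb}_{V^\nu}(y),\infty)$, and this threshold is finite: the slope of $V^\nu$ is eventually at most $\bfE[e^{-qT^\nu}]<1$, as in Lemma \ref{Lem312}. Hence, for every strategy satisfying condition (i), on $\{U^\pi_{(k-)}>\bar{\bfb}_{V^\nu}(Y_{T^\nu_k})\}$ your $D_k$ equals $V^\nu-\tilde{V}^\nu<0$. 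So both ``$D_k\ge 0$'' and ``$D_k=0$ iff (i)'' fail.

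Your analysis of $g$ is correct, but it characterizes $\max g-g(\Delta L^\pi_{T^\nu_k})$, not $g(0)-g(\Delta L^\pi_{T^\nu_k})$. Your displayed identity fails for the same reason. Iterating $V^\nu=\cV^\nu V^\nu$ in the form \eqref{optimal_b} produces $\tilde{V}^\nu$, not $V^\nu$, at the post-injection pre-dividend level at each $T^\nu_k$. Your formula is therefore off by $\bfE[\sum_k e^{-\frq(T^\nu_k)}(\tilde{V}^\nu-V^\nu)(\cdots)]$.

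\textbf{The repair, and how the corrected route compares with the paper.} Set $D_k:=\tilde{V}^\nu(U^\pi_{(k-)}\lor 0,Y_{T^\nu_k})-\Delta L^\pi_{T^\nu_k}-V^\nu(U^\pi_{T^\nu_k},Y_{T^\nu_k})$. Then $D_k\ge 0$ by \eqref{ineqg}, $D_k=0$ iff condition (i) holds at $T^\nu_k$ by \eqref{Lem305_001}, and
\[
V^\nu(x,y)-v_{\pi_n}(x,y)=\bfE_{(x,y)}\Big[\sum_{k=1}^n e^{-\frq(T^\nu_k)}D_k+e^{-\frq(T^\nu_n)}(V^\nu-v_0)(U^\pi_{T^\nu_n},Y_{T^\nu_n})\Big].
\]
With this change your route is correct and differs from the paper's in three places.

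\begin{itemize}
\item \emph{Fixed point.} You pass to the limit in $\cV^{\nu,(n+1)}v_0=\cV^\nu\cV^{\nu,(n)}v_0$ through \eqref{optimal_b}, using $\tilde{f}(z,y)=\max_{w\in[0,z]}(z-w+f(w,y))$ and the local uniform convergence of concave functions. The paper instead uses two one-sided arguments: dominated convergence for $\cV^\nu_{\pi^\nu_{\bfb}}V^\nu\le V^\nu$, and the Markovian class $\tilde{\Pi}^\nu$ with shifted strategies $\pi_{(1)}$ for the reverse inequality.
\item \emph{($\Leftarrow$).} The paper's argument is essentially your identity with $D_k\equiv 0$. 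The tail term you flag is handled by \eqref{limsup_result}, which holds for every $\pi\in\un{\Pi}^\nu$ because $U^\pi_t\le (x\lor 0)+X_t-\un{X}_t$ and \eqref{finite_ass} holds.
\item \emph{($\Rightarrow$).} This is where your route really gains. Since $V^\nu\ge v_0$, the tail term is non-negative. Lemma \ref{Lem308} and monotone convergence then give $V^\nu-v_\pi\ge \bfE[\sum_k e^{-\frq(T^\nu_k)}D_k]$ with no tail estimate at all. This replaces the paper's first-deviation argument, which uses $n_{\neq}$, the truncated values $u_\pi(k,\cdot)$, Fatou's lemma, and a comparison with $\pi^\nu_{\bar{\bfb}_{V^\nu}}$.
\end{itemize}
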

Since the implication from left to right is not difficult, though somewhat technical, it is given in Appendix \ref{lemma_proof} of the Supplementary Material. We prove here only the converse implication.
\begin{proof}[Proof of ($\Leftarrow$) in Lemma \ref{Lem311}]
(1) For $\bfb_{V^\nu}\in\hat{\Xi}^\nu_{V^\nu}$, we prove
\begin{align}
\cV^{\nu}_{\pi^\nu_{\bfb_{V^{\nu}}}}V^{\nu}(x, y)= V^{\nu}(x, y),\qquad (x, y)\in\bR\times E. \label{Lem310_003}
\end{align}
By the definition of $\cV^\nu$ and \eqref{inq_of_NPV}, we have 
\begin{align}
\cV^\nu_{\pi^\nu_{\bfb_{V^\nu}}} \cV^{\nu, (n)} {v_0}(x, y) 
\leq  \cV^{\nu, (n+1)} {v_0}(x, y)
\leq V^{\nu}(x, y),\qquad (x, y)\in\bR\times E. 
\label{Lem310_005}
\end{align}
By the dominated convergence theorem, \eqref{Lem309_statement} and \eqref{Lem310_005}, we have 
\begin{align}
\cV^\nu_{\pi^\nu_{\bfb_{V^{\nu}}}} V^{\nu}(x, y)
=\lim_{n\to\infty} \cV^\nu_{\pi^\nu_{\bfb_{V^\nu}}} \cV^{\nu, (n)} {v_0}(x, y) 
\leq V^{\nu}(x, y),\qquad (x, y)\in\bR\times E. \label{Lem310_001}
\end{align}
Let $\tilde{\Pi}^\nu$ be the set of strategies $\pi$ belonging to $\un{\Pi}^\nu$ such that 
\begin{align}
L^{\pi}_t=\sum_{k\in\bN}
\left((  U^\pi_{(k-)} - \bfb^{\pi}_k(Y_{T^\nu_k}) )\lor 0\right) 1_{[0,t]}(T^\nu_k)
,\qquad t\geq 0, 
\end{align}
for a set of non-negative measurable functions $\{\bfb^{\pi}_n:n\in\bN\}$ on $E$. 
Since $\pi^\ast_n$ defined in the proof of Lemma \ref{Lem307} belongs to $\tilde{\Pi}^\nu$,  \eqref{Lem307_003} and \eqref{Lem309_statement} imply that 
\begin{align}
V^{\nu}(x, y ) = \sup_{\pi\in\tilde{\Pi}^\nu}v_\pi(x, y), \qquad (x, y)\in\bR\times E. 
\end{align}
For $n \in\bN$ and a strategy $\pi \in \tilde{\Pi}^\nu$, we define 
a strategy $\pi_{(n)} \in \tilde{\Pi}^\nu$ to satisfy $\bfb^{\pi_{(n)}}_k\equiv \bfb^{\pi}_{k+n}$ for $k\in\bN$. 
Then, we have, for $\pi \in \tilde{\Pi}^\nu$,
\begin{align}
\cV^{\nu}_{\pi^\nu_{\bfb_{V^{\nu}}}}V^{\nu}(x, y)=\cV^{\nu}V^{\nu}(x, y)\geq \cV^{\nu} v_{\pi_{(1)}}(x, y)
\geq v_\pi(x, y),\qquad (x, y)\in\bR\times E. 
\end{align}
By taking the supremum over $\pi$, we have 
\begin{align}
\cV^{\nu}_{\pi^\nu_{\bfb_{V^{\nu}}}}V^{\nu}(x, y)\geq V^{\nu}(x, y),\qquad (x, y)\in\bR\times E. \label{Lem310_002}
\end{align}
By \eqref{Lem310_001} and \eqref{Lem310_002}, we obtain \eqref{Lem310_003}. 
\par
(2) We prove the optimality of the strategies included in $\Pi^{\nu,\ast}_{V^\nu}$.
We fix $\pi\in\Pi^{\nu,\ast}_{V^\nu}$. 
Using the definitions of $\un{\bfb}_{V^\nu}$, $\bar{\bfb}_{V^\nu}$, and $\hat{\Xi}^\nu_{V^\nu}$ together with \eqref{Lem310_003} and \eqref{optimal_b}, we have, for $(x, y)\in \bR\times E$, 
\begin{align}
V^\nu(x, y) =& \bfE_{(x, y)}\sbra{   -\beta\int_{[0,T^\nu]}e^{-\frq(t)} \diff ((-\un{X}_t) \lor 0)+ e^{-\frq(T^\nu)} \tilde{V}^\nu(X^0_{T^\nu}, Y_{T^\nu})}\\
=& \bfE_{(x, y)}\sbra{e^{-\frq(T^\nu)} \Delta L^\pi_{T^\nu}   -\beta\int_{[0,T^\nu]}e^{-\frq(t)} \diff R^{\pi}_t+ e^{-\frq(T^\nu)} V^\nu(U^\pi_{T^\nu}, Y_{T^\nu})}. 
\end{align}
Iterating this identity $n\in\bN$ times and using \eqref{periodicMarkov}, we have
\begin{align}
V^\nu(x, y) =& \bfE_{(x, y)}\sbra{\int_{[0, T^\nu_n]}e^{-\frq(t)}  \diff L^\pi_t   -\beta\int_{[0,T^\nu_n]}e^{-\frq(t)} \diff R^{\pi}_t+ e^{-\frq(T^\nu_n)} V^\nu(U^\pi_{T^\nu_n}, Y_{T^\nu_n})}. 
\end{align}
Thus, we have, for $(x, y)\in\bR\times E$ and $n\in\bN$, 
\begin{align}
&0\leq V^{\nu}(x, y)-v_{\pi}(x, y)\\
&=\bfE_{(x, y)}\sbra{ e^{-\frq(T^\nu_n)} v^L_0(U^\pi_{T^\nu_n}, Y_{T^\nu_n})
-\int_{[T^\nu_{n+1}, \infty)}e^{-\frq(t)}  \diff L^\pi_t   +\beta\int_{(T^\nu_n, \infty) }e^{-\frq(t)} \diff R^{\pi}_t
}\\
&\leq\bfE_{(x, y)}\sbra{ e^{-\frq(T^\nu_n)} 
\rbra{
V^\nu(U^\pi_{T^\nu_n}, Y_{T^\nu_n})+\beta v^R_0(U^\pi_{T^\nu_n}, Y_{T^\nu_n})
}}
\leq\bfE_{(x, y)}\sbra{ e^{-\frq(T^\nu_n)} \rbra{U^{\pi}_{T^\nu_n}+(1+\beta)B^\nu}}, 
\label{divide}
\end{align}
where the second inequality follows from \eqref{periodicMarkov} and \eqref{upperbound_by_0strategy}, 
and the last inequality follows from \eqref{finiteness}. 
For sufficiently large $\ell\in\bN$ such that $
e^{-q(k-1)}k\leq e^{-\frac{q\ell}{2}}$ for $k>\ell$,  
we have
\begin{align}
\bfE_{(x, y)}\sbra{ e^{-\frq(T^\nu_n)} U^{\pi}_{T^\nu_n}}&\leq 
\bfE_{(0, y)}\sbra{e^{-q T^\nu_n}\rbra{(x\lor0)+X_{T^\nu_n}-\un{X}_{T^\nu_n}}} \\
&\leq 
2\bfE_{(0, y)}\sbra{e^{-q T^\nu_n}\rbra{(x\lor0)+\sup_{s\in[0,T^\nu_n]}|X_{s}|}} \\
&\leq 
2\sum_{k\in\bN} e^{-q(k-1)} \bfE_{(0, y)}\sbra{(x\lor0)+\sup_{s\in[0,k]}|X_{s}|; T^\nu_n\in (k-1, k]} \\
&\leq 
2\sum_{k\in\bN} e^{-q(k-1)} 
((x\lor0)+kM)
\bfP(T^\nu_n\in (k-1, k])\\
&\leq 
2\sum_{k=1}^\ell e^{-q(k-1)} 
((x\lor0)+kM)
\bfP(T^\nu_n\in (k-1, k])\\
&\ \ + 
2e^{-q\ell} (x\lor0)\bfP(T^\nu_n\in (\ell, \infty))+2e^{-\frac{q\ell}{2}} 
M\bfP(T^\nu_n\in (\ell, \infty)), 
\end{align}
where the first inequality follows from the fact that $\pi\in\un{\Pi}^\nu$. 
By taking the limit as $n\to\infty$, we have 
\begin{align}
\limsup_{n\to\infty}\bfE_{(x, y)}\sbra{ e^{-\frq(T^\nu_n)} U^{\pi}_{T^\nu_n}}\leq 
2e^{-q\ell} (x\lor0)+2e^{-\frac{q\ell}{2}} M. 
\label{limsup_bound}
\end{align}
Since \eqref{limsup_bound} is true for any large enough $\ell\in\bN$, we have 
\begin{align}
\limsup_{n\to\infty}\bfE_{(x, y)}\sbra{ e^{-\frq(T^\nu_n)} U^{\pi}_{T^\nu_n}}=0. \label{limsup_result}
\end{align}
By \eqref{divide} with $n\to \infty$ and \eqref{limsup_result}, we have, for $(x, y)\in\bR\times E$,
$V^{\nu}(x, y)-v_{\pi}(x, y)=0$. 
The proof is complete. 
\end{proof}
Combining Lemma \ref{Lem304a} and Lemma \ref{Lem311}, we have obtained a version of Theorem \ref{newThm302} in which $\Pi^{\nu,\ast}$ is replaced by $\Pi^{\nu,\ast}_{V^\nu}$. Then, we have also obtained a version of Corollary \ref{Thm302} in which $\hat{\Xi}^\nu$ is replaced by $\hat{\Xi}^\nu_{V^\nu}$.  
\par
We next prove that $\bar{\bfb}_{V^\nu}$ is finite and then proceed to Step 3. Before doing so, we first present a lemma on the right derivatives of the expected NPVs of dividends and capital injections under the MMPCB strategy, which will be needed in the proof of finiteness and in Step 3. 
\begin{Lem}\label{right_derivative_lemma}
For any non-negative measurable function $\bfb$ on $E$ and $(x, y)\in\bR\times E$, we have 
\begin{align}
v^\prime_{{\pi}^{\nu}_{\bfb}+}(x, y)=\bfE_{(x,y )}\sbra{ e^{-\frq({T}^{ \nu, +}_{\bfb-})} ;{T}^{ \nu, +}_{\bfb-} \leq {\tau}^{  -}_0}+
\beta\bfE_{(x,y )}\sbra{ e^{-\frq({\tau}^{  -}_0)} ;{\tau}^{  -}_0<{T}^{ \nu, +}_{\bfb-} }.
\label{right_derivative}
\end{align}
\end{Lem}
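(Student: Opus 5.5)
We prove \eqref{right_derivative} by a pathwise perturbation argument based on the coupling ($\sharp$) from the proof of Lemma \ref{Lem306}, in the same way \eqref{Lem306_004} was derived there.

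\textbf{Reduction to a shifted path.} Fix $(x,y)\in\bR\times E$, a non-negative measurable $\bfb$, and $\varepsilon>0$. By the additivity \eqref{200_01} together with the independence of $\bfr$ from $(X,Y)$, we have
\begin{align}
v_{\pi^\nu_\bfb}(x+\varepsilon,y)=\bfE_{(x,y)}\sbra{\int_{[0,\infty)}e^{-\frq(t)}\diff\rbra{L^{\pi^{(\varepsilon)}_\bfb}_t-\beta R^{\pi^{(\varepsilon)}_\bfb}_t}}.
\end{align}
Hence the difference quotient can be written as
\begin{align}
\frac{v_{\pi^\nu_\bfb}(x+\varepsilon,y)-v_{\pi^\nu_\bfb}(x,y)}{\varepsilon}
=\frac1\varepsilon\bfE_{(x,y)}\sbra{\int_{[0,\infty)}e^{-\frq(t)}\diff\rbra{D^L_t-\beta D^R_t}},
\end{align}
where $D^L:=L^{\pi^{(\varepsilon)}_\bfb}-L^{\pi^{(0)}_\bfb}$ and $D^R:=R^{\pi^{(\varepsilon)}_\bfb}-R^{\pi^{(0)}_\bfb}$. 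All quantities are integrable by Lemma \ref{Lem304}.

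\textbf{Lower and upper bounds via ($\sharp$).} By ($\sharp$), $D^L$ is non-decreasing with values in $[0,\varepsilon]$, and $-D^R$ is non-decreasing with values in $[0,\varepsilon]$. Moreover, $D^L=D^R=0$ before ${T}^{(\varepsilon),+}_\bfb\land\tau^{(0),-}_0$, and $U^{\pi^{(\varepsilon)}_\bfb}-U^{\pi^{(0)}_\bfb}=\varepsilon-D^L+D^R$ is non-increasing. This gives two cases.
\begin{itemize}
\item On $\{T^{(0),+}_{\bfb-}\le\tau^{(0),-}_0\}$, the whole excess $\varepsilon$ is paid as dividend no later than $T^{(0),+}_{\bfb-}$ and no earlier than $T^{(\varepsilon),+}_\bfb$.
\item On $\{\tau^{(\varepsilon),-}_{0+}<T^{(\varepsilon),+}_\bfb\}$, the whole $\varepsilon$ is saved in injections, no earlier than $\tau^{(0),-}_0$ and no later than $\tau^{(\varepsilon),-}_{0+}$.
\end{itemize}
Using monotonicity of the discount factor, these give the lower bound
\begin{align}
\varepsilon\,\bfE\sbra{e^{-\frq(T^{(0),+}_{\bfb-})};T^{(0),+}_{\bfb-}\le\tau^{(0),-}_0}+\beta\varepsilon\,\bfE\sbra{e^{-\frq(\tau^{(\varepsilon),-}_{0+})};\tau^{(\varepsilon),-}_{0+}<T^{(\varepsilon),+}_\bfb}.
\end{align}
Each unit of $\varepsilon$ is either paid as dividend or saved as injection; since $\beta>1$, these contributions are additive on the two disjoint events. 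The upper bound is analogous, with $T^{(\varepsilon),+}_\bfb$ in place of $T^{(0),+}_{\bfb-}$ and $\tau^{(0),-}_0$ in place of $\tau^{(\varepsilon),-}_{0+}$. On the residual event, where neither case is fully realized, the discount factors are bounded by those at the first of the two times, so the contribution is controlled by $\varepsilon$ times the probability of that event.

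\textbf{Passage to the limit (the main obstacle).} We must show, $\bfP_{(x,y)}$-a.s. as $\varepsilon\downarrow0$, that:
\begin{itemize}
\item $\tau^{(\varepsilon),-}_{0+}\to\tau^{(0),-}_0$, which is already used in the proof of Lemma \ref{Lem306};
\item $T^{(\varepsilon),+}_\bfb\to T^{(0),+}_{\bfb-}$. Since the dividend times form the discrete set $\{T^\nu_k\}$, the event $\{X_{T^\nu_k}+\varepsilon>\bfb(Y_{T^\nu_k})\}$ decreases to $\{X_{T^\nu_k}\ge\bfb(Y_{T^\nu_k})\}$ for each $k$. Hence $T^{(\varepsilon),+}_\bfb=T^{(0),+}_{\bfb-}$ for all small $\varepsilon$ whenever the latter is finite (only finitely many $k$ lie before it), and $T^{(\varepsilon),+}_\bfb\to\infty$ otherwise;
\item the indicator events converge, e.g.\ $\{\tau^{(\varepsilon),-}_{0+}<T^{(\varepsilon),+}_\bfb\}\to\{\tau^{(0),-}_0<T^{(0),+}_{\bfb-}\}$, and the residual event shrinks to a null set. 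Ties $\tau^{(0),-}_0=T^{(0),+}_{\bfb-}$ need care; the paper's convention assigns them to the dividend term through ``$\le$''. I expect this tie-breaking to be the delicate point: at a common time $T^\nu_k=\tau^-_0$ a jump can drive $X$ below $0$ while $X^{(\varepsilon)}$ stays above the barrier, and ($\sharp$) must be used to show that the dividend paid there absorbs $\varepsilon$.
\end{itemize}

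Given these limits, dominated convergence (all integrands are bounded by $\beta$) shows that both the lower and the upper bound divided by $\varepsilon$ converge to the right-hand side of \eqref{right_derivative}, which proves the lemma.
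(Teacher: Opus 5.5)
Your proposal is correct and follows essentially the paper's proof: both use ($\sharp$) to sandwich the difference quotient between discount factors at $T^{(0),+}_{\bfb-}$, $T^{(\varepsilon),+}_{\bfb}$, $\tau^{(0),-}_0$ and $\tau^{(\varepsilon),-}_{0+}$ and then let $\varepsilon\downarrow0$. The paper bounds the dividend and injection differences separately, each on the complement of the other case, so no residual event arises; and the tie case you flag is harmless, since ties lie in the $\varepsilon$-independent event $\{T^{(0),+}_{\bfb-}\le\tau^{(0),-}_0\}$ that ($\sharp$) already covers (a finite tie forces $X_{T^\nu_k}=0=\bfb(Y_{T^\nu_k})$ rather than a jump below $0$).
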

Since the proof of this lemma is almost the same as that of \cite[Proposition 4.1]{MatNobPer2025+}, we include it in Appendix \ref{lemma_proof} of the Supplementary Material. 
\begin{Lem}\label{Lem312}
The function $\bar{\bfb}_{V^\nu}$ is finite. 
\end{Lem}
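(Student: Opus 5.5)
We argue by contradiction: suppose $\bar{\bfb}_{V^\nu}(y_0)=\infty$ for some $y_0\in E$. Since $V^\nu\in\Gamma^\nu$ by Lemma \ref{Lem309}, concavity then forces $V^{\nu\prime}_+(x,y_0)\geq 1$ for all $x\geq 0$. We aim to contradict this by showing that $V^{\nu\prime}_+(x,y_0)<1$ for large $x$.

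The plan is to bound $V^{\nu\prime}_+(x,y_0)$ through the DP representation rather than through a specific strategy. By Lemma \ref{Lem309} and \eqref{Lem310_003}, $V^\nu=\cV^\nu V^\nu$. The derivative formula \eqref{Lem306_004}, applied with $f=V^\nu$ (which is legitimate because $V^\nu\in\Gamma^\nu$), gives
\begin{align}
V^{\nu\prime}_+(x,y_0)=\bfE_{(x,y_0)}\sbra{e^{-\frq(T^\nu)}\tilde{V}^{\nu\prime}_+(X_{T^\nu},Y_{T^\nu});\un{X}_{T^\nu}\geq 0}+\beta\bfE_{(x,y_0)}\sbra{e^{-\frq(\tau^-_0)};\un{X}_{T^\nu}<0}.
\end{align}
We then use two bounds. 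First, $\tilde{V}^{\nu\prime}_+\le\beta$. Second, $\bfq>q$, so $e^{-\frq(T^\nu)}\le e^{-qT^\nu}$. Together these give
\begin{align}
V^{\nu\prime}_+(x,y_0)\le \beta\,\bfE\sbra{e^{-qT^\nu}}+\beta\,\bfP_{(0,y_0)}\rbra{\un{X}_{T^\nu}<-x}.
\end{align}
The trouble is that $\beta\bfE[e^{-qT^\nu}]$ need not be below $1$, so a single step does not suffice. We therefore iterate $n$ times, using $V^\nu=\cV^{\nu,(n)}$-type identities or, equivalently, the $n$-step version of \eqref{Lem306_004} obtained by applying it inductively along the MMPCB-type optimal strategy. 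On the event that the process has not gone below $0$ before $T^\nu_n$, the derivative propagates with factor $e^{-\frq(T^\nu_n)}\le e^{-qT^\nu_n}$ and is bounded by $\beta$. Choosing $n$ with $\beta\bfE[e^{-qT^\nu}]^n<1/3$, we get
\begin{align}
V^{\nu\prime}_+(x,y_0)\le \tfrac13+\beta\,\bfP_{(0,y_0)}\rbra{\inf_{s\le T^\nu_n}X_s<-x}.
\end{align}
One subtlety: in the derivative formula the modulated process is $X$ itself only while no dividends are paid. Since $\tilde{V}^\nu$ has slope exactly $1\le\beta$ above $\un{\bfb}$, bounding by $\beta$ remains valid. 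The cleanest route is to bound by the derivative of the strategy ``pay nothing until $T^\nu_n$''. Alternatively, we can apply Lemma \ref{right_derivative_lemma} to $\pi^\nu_\bfb$ with $\bfb=\un{\bfb}_{V^\nu}$, which is optimal by Lemma \ref{Lem311}. This gives $V^{\nu\prime}_+=v'_{\pi^\nu_\bfb+}$, and the right-hand side of \eqref{right_derivative} can be split according to whether $T^{\nu,+}_{\bfb-}$ or $\tau^-_0$ comes first or whether $T^\nu_n$ comes first.

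The remaining term must be made small uniformly as $x\to\infty$. We will use assumption \eqref{finite_ass}: by \eqref{periodicMarkov}-type Markov splitting over unit time intervals, $\bfE_{(0,y_0)}[\sup_{s\le k}|X_s|]\le kM$. Together with Markov's inequality on $\{T^\nu_n\le k\}$, this gives
\begin{align}
\bfP_{(0,y_0)}\rbra{\inf_{s\le T^\nu_n}X_s<-x}\le \frac{kM}{x}+\bfP(T^\nu_n>k).
\end{align}
Choosing $k$ and then $x$ large makes this less than $1/(3\beta)$, so $V^{\nu\prime}_+(x,y_0)\le 2/3<1$, which is the desired contradiction.

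The main obstacle is the middle step: obtaining a clean $n$-step derivative bound with the geometric factor $\bfE[e^{-qT^\nu}]^n$. We expect to get it by differentiating the identity $V^\nu(x,y)=\bfE_{(x,y)}[\cdots+e^{-\frq(T^\nu_n)}V^\nu(U^\pi_{T^\nu_n},Y_{T^\nu_n})]$ for the optimal MMPCB strategy via ($\sharp$), iterated over $n$ periods. The difference argument of ($\sharp$) shows that an $\varepsilon$ shift yields either a dividend gain of at most $\varepsilon$, an injection saving of at most $\beta\varepsilon$ (only on the event of going below $0$), or a terminal shift that is bounded via $V^{\nu\prime}_+\le\beta$.
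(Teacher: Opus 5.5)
The step that fails is your $n$-step bound. In \eqref{Lem306_004} with $f=V^\nu$, the integrand $\tilde{V}^{\nu\prime}_+(X_{T^\nu},Y_{T^\nu})$ equals $1$ on $\{X_{T^\nu}\geq \un{\bfb}_{V^\nu}(Y_{T^\nu})\}$, and on that event the recursion stops. There the marginal unit of capital is paid out as a dividend at that opportunity. It contributes $e^{-\frq(T^\nu_k)}$ at the first such $T^\nu_k$ (before ruin); it is not carried forward to $T^\nu_n$ with factor $e^{-\frq(T^\nu_n)}$. Hence your inequality $V^{\nu\prime}_+(x,y_0)\leq \frac{1}{3}+\beta\bfP_{(0,y_0)}(\inf_{s\leq T^\nu_n}X_s<-x)$ and the conclusion $V^{\nu\prime}_+\leq 2/3$ are false in general.

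For instance, let $E$ be a single point, $X_t=X_0+ct$ with $c>0$, $\bfq\equiv q'$ with $q'>q$, and $\nu=\delta_h$. Paying out everything at every opportunity is optimal, $V^{\nu\prime}_+(x,y)=e^{-q'h}$ for all $x\geq 0$, and $\tau^-_0=\infty$. So your bound fails once $e^{-q'h}>1/3$. Since $e^{-q'h}\to 1$ as $h\downarrow 0$, no fixed constant below $1$ can work.

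What the iteration actually gives is
\[
V^{\nu\prime}_+(x,y_0)\leq \bfE\sbra{e^{-qT^\nu}}+\beta\,\bfE\sbra{e^{-qT^\nu}}^n+\beta\,\bfP_{(0,y_0)}\rbra{\inf_{s\leq T^\nu_n}X_s<-x},
\]
because the dividend contributions sit on disjoint events at times $T^\nu_k\geq T^\nu$. You then have to choose $n$ and $x$ so that the last two terms are below $1-\bfE[e^{-qT^\nu}]$. With that correction your argument closes, and your ruin estimate via $\bfE_{(0,y)}[\sup_{s\leq k}|X_s|]\leq kM$ is fine.

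Once the dividend term is kept with coefficient $1$, though, the truncation at $T^\nu_n$ and the contradiction are unnecessary. You then arrive at the paper's proof, which is exactly the alternative you mention but do not pursue. Since $\pi^\nu_{\un{\bfb}_{V^\nu}}$ is optimal, \eqref{right_derivative} expresses $V^{\nu\prime}_+(x,y)$ as the sum of two terms:
\begin{itemize}
\item a dividend term with coefficient $1$, evaluated at $T^{\nu,+}_{\un{\bfb}_{V^\nu}-}\geq T^\nu$, hence at most $\bfE[e^{-qT^\nu}]<1$;
\item an injection term at most $\beta\bfE_{(x,y)}[e^{-q\tau^-_0}]$, which tends to $0$ as $x\to\infty$ by \eqref{200_01} and dominated convergence.
\end{itemize}
Thus $\limsup_{x\to\infty}V^{\nu\prime}_+(x,y)\leq \bfE[e^{-qT^\nu}]<1$ for every $y\in E$, which gives $\bar{\bfb}_{V^\nu}(y)<\infty$. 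Your worry that $\beta\bfE[e^{-qT^\nu}]$ need not be below $1$ comes only from bounding the slope-$1$ part of $\tilde{V}^\nu$ by $\beta$.
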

Since the proof of this lemma is easy, we include it in Appendix \ref{lemma_proof} of the Supplementary Material. 
\par
If we can then show that $\Pi^{\nu,\ast}$ and $\Pi^{\nu,\ast}_{V^\nu}$ are equal, we can substitute $\Pi^{\nu,\ast}_{V^\nu}$ for $\Pi^{\nu,\ast}$, $\Xi^\nu_{V^\nu}$ for $\Xi^\nu$, $\hat{\Xi}^\nu_{V^\nu}$ for $\hat{\Xi}^\nu$, $\un{\bfb}_{V^\nu}$ for $\un{\bfb}^\nu$, and $\bar{\bfb}_{V^\nu}$ for $\bar{\bfb}^\nu$ in the results obtained so far, and thus obtain Theorems \ref{Thm302a} and \ref{newThm302}, and Corollary \ref{Thm302}.
\par
\textbf{Step 3.}
We show that $\Xi^\nu$ and $\Xi^\nu_{V^\nu}$ are the same, and consequently that $\hat{\Xi}^\nu$ and $\hat{\Xi}^\nu_{V^\nu}$ are the same. 
As a consequence, we establish the two identities
$\un{\bfb}^\nu=\un{\bfb}_{V^\nu}$ and $\bar{\bfb}^\nu=\bar{\bfb}_{V^\nu}$,
as well as Theorems \ref{Thm302a} and \ref{newThm302} and Corollary \ref{Thm302}.
For this purpose, we use the following three lemmas.  
\begin{Lem}\label{LemA01}
We assume that the non-negative measurable function $\bfb$ on $E$ satisfies $v_{\pi^\nu_\bfb}\in\Gamma^\nu$ and $\bfb(y)\in[\un{\bfb}_{v_{\pi^\nu_\bfb}}(y),\bar{\bfb}_{v_{\pi^\nu_\bfb}}(y)]$ for $y\in E$. Then, $v_{\pi^\nu_\bfb}$ is equal to $V^\nu$. 
\end{Lem}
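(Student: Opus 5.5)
The plan is to show that $v_{\pi^\nu_\bfb}$ is a fixed point of $\cV^\nu$, and then compare it with $V^\nu$ by iterating, in the same way as in the proof of ($\Leftarrow$) in Lemma \ref{Lem311}. Write $w:=v_{\pi^\nu_\bfb}$, which belongs to $\Gamma^\nu$ by assumption.

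\textbf{Step A: the fixed-point identity.} By \eqref{periodicMarkov} at $T^\nu$, and because the MMPCB strategy restarts at $T^\nu$ from the state $(U^{\pi^\nu_\bfb}_{T^\nu},Y_{T^\nu})$ with the same barrier, we have
\begin{align}
w=\cV^\nu_{\pi^\nu_\bfb}w .
\end{align}
The hypothesis gives $\un{\bfb}_w(y)\le\bfb(y)\le\bar{\bfb}_w(y)$ for every $y\in E$, so in particular $m_{y'}$-a.e.\ for all $y'\in E$. Lemma \ref{Lem303} applied with $f=w$ then yields
\begin{align}
\cV^\nu w=\cV^\nu_{\pi^\nu_\bfb}w=w .
\end{align}

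\textbf{Step B: $V^\nu\le w$.} The inequality $w\le V^\nu$ is trivial. For the reverse, I would use the class $\tilde\Pi^\nu$ and the shifted strategies $\pi_{(n)}$ from part (1) of the proof of Lemma \ref{Lem311}. For $\pi\in\tilde\Pi^\nu$, I would repeat the induction \eqref{induction_inequality} of Lemma \ref{Lem307} with $\cV^{\nu,(k)}v_0$ replaced by $\cV^{\nu,(k)}w=w$. That argument only uses that the function lies in $\Gamma^\nu$, through the pointwise maximization \eqref{Lem307_002}. This gives, for every $n$,
\begin{align}
v_{\pi_n}\le \bfE_{(x,y)}\Bigl[\int_{[0,T^\nu_n]}e^{-\frq(t)}\diff(L^{\pi}_t-\beta R^{\pi}_t)+e^{-\frq(T^\nu_n)}w(U^{\pi}_{T^\nu_n},Y_{T^\nu_n})\Bigr].
\end{align}
The terminal term is then compared with the corresponding term for $\pi$ itself. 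By condition (iii) of $\Gamma^\nu$, concavity, and slopes in $[0,\beta]$, we get $|w(u,y)|\le u+v^L_0(0,y)+\beta v^R_0(0,y)\le u+(1+\beta)B^\nu$ for $u\ge0$, using \eqref{finiteness}.

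\textbf{Main obstacle: the tail terms.} The difficulty is controlling the remaining terms as $n\to\infty$. The quantity $\bfE[e^{-\frq(T^\nu_n)}(U^\pi_{T^\nu_n}+(1+\beta)B^\nu)]$ tends to $0$ exactly as in \eqref{limsup_result}, since $\pi\in\un{\Pi}^\nu$. The difference between the integrals up to $T^\nu_n$ and $v_\pi$ vanishes by Lemma \ref{Lem308}, or directly by the same bound together with \eqref{upperbound_by_0strategy}. Taking $n\to\infty$ gives $v_\pi\le w$ for all $\pi\in\tilde\Pi^\nu$. Taking the supremum and using $V^\nu=\sup_{\tilde\Pi^\nu}v_\pi$, shown in the proof of Lemma \ref{Lem311}, we conclude $V^\nu\le w$. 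Alternatively, one could iterate $\cV^\nu$ monotonically from $v_0$ and show $\cV^{\nu,(n)}v_0\le w$, using a comparison $v_0\le w$ plus monotonicity of $\cV^\nu$. However, $v_0\le w$ needs checking, so I would use the first route.
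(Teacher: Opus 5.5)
Your Step A is the paper's first step: the identity $\cV^\nu v_{\pi^\nu_\bfb}=\cV^\nu_{\pi^\nu_\bfb}v_{\pi^\nu_\bfb}=v_{\pi^\nu_\bfb}$, obtained from \eqref{periodicMarkov} and Lemma \ref{Lem303}. Step B takes a different route. The paper does not run a verification argument over all strategies. It takes $\pi^\ast:=\pi^\nu_{\un{\bfb}_{V^\nu}}$, which satisfies $\cV^\nu_{\pi^\ast}V^\nu=V^\nu$ by part (1) of the proof of Lemma \ref{Lem311}. It then iterates $\cV^\nu_{\pi^\ast}w\le\cV^\nu w=w$, using only monotonicity of the single operator $\cV^\nu_{\pi^\ast}$, to get $0\le V^\nu-w\le\bfE_{(x,y)}[e^{-\frq(T^\nu_n)}(V^\nu-w)(U^{\pi^\ast}_{T^\nu_n},Y_{T^\nu_n})]$. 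So the tail only has to be controlled along one fixed strategy. Your argument is a self-contained verification theorem: every $w\in\Gamma^\nu$ with $\cV^\nu w\le w$ dominates $v_\pi$ for all $\pi\in\un{\Pi}^\nu$. Restricting to $\tilde{\Pi}^\nu$ is therefore harmless but unnecessary, by Lemma \ref{Lem304a}. Your route does not rely on Lemma \ref{Lem311}, at the cost of redoing the pointwise maximization \eqref{Lem307_002} for an arbitrary strategy and a tail estimate for each $\pi$. Your bound $|w(u,y)|\le u+(1+\beta)B^\nu$ together with \eqref{limsup_result} handles that tail correctly.

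One correction is needed. The inequality you display after ``This gives, for every $n$'' is not what the induction produces, and by itself it does not lead to $v_\pi\le w$. Set $J_m:=\bfE_{(x,y)}[\int_{[0,T^\nu_m]}e^{-\frq(t)}\diff(L^\pi_t-\beta R^\pi_t)+e^{-\frq(T^\nu_m)}w(U^\pi_{T^\nu_m},Y_{T^\nu_m})]$. Apply the maximization step \eqref{Lem307_002} with $w$ in place of $\cV^{\nu,(l)}v_0$, then \eqref{periodicMarkov} at $T^\nu_{m-1}$ and $\cV^\nu w=w$. This gives $J_m\le J_{m-1}$, hence $J_n\le J_0=w(x,y)$. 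Combined with your tail estimate $J_n\to v_\pi(x,y)$, this yields $v_\pi\le w$.

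Written as $v_{\pi_n}\le J_n$, your display is instead the base case $k=0$ of \eqref{induction_inequality}. That case holds with equality in the paper only because the terminal function there is $v_0$; with $w$ it would require $v_0\le w$, which you have not shown. That comparison is in fact easy: run the paper's argument with the no-dividend strategy in place of $\pi^\ast$. Once you have it, your alternative route via monotonicity of $\cV^\nu$ and Lemma \ref{Lem309} also works.
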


\begin{proof}
Since $\bfb(y)\in[\un{\bfb}_{v_{\pi^\nu_\bfb}}(y),\bar{\bfb}_{v_{\pi^\nu_\bfb}}(y)]$ for $y\in E$ and by \eqref{periodicMarkov} at $T^\nu$, we have, for $(x, y) \in \bR\times E$, 
\begin{align}
\cV^{\nu}v_{\pi^\nu_\bfb}(x, y)=
\cV^{\nu}_{\pi^\nu_{\bfb}}v_{\pi^\nu_\bfb}(x, y)
=v_{\pi^\nu_\bfb}(x, y). \label{tokucho}
\end{align}
For $\pi\in\un{\Pi}^\nu$, $n\in\bN$ and $f\in\Gamma^\nu$, we write
\begin{align}
\cV^{\nu,(n)}_\pi f (x, y):=\cV^{\nu}_\pi(\cV^{\nu,(n-1)}_\pi f) (x, y),\qquad (x, y)\in\bR\times E,
\end{align}
inductively, if it is well-defined, where $\cV^{\nu, (0)}_\pi f(x, y):=f(x, y)$ for $(x, y)\in\bR\times E$. 
It is easy to verify that for $n\in\bN$, $\cV^{\nu, (n)}_{\pi^\nu_{\un{\bfb}_{V^\nu}}}v_{\pi^\nu_\bfb}$ is well-defined and 
\begin{align}
\cV^{\nu, (n)}_{\pi^\nu_{\un{\bfb}_{V^\nu}}}v_{\pi^\nu_\bfb}(x, y)
\leq v_{\pi^\nu_\bfb}(x, y),\qquad (x, y)\in\bR\times E, \label{pib_ineq}
\end{align} 
by induction. 
In fact, by the definition of $\cV^\nu$ and \eqref{tokucho}, we have
\begin{align}
\cV^{\nu}_{\pi^\nu_{\un{\bfb}_{V^\nu}}}v_{\pi^\nu_\bfb}(x, y)
\leq \cV^\nu v_{\pi^\nu_\bfb}(x, y)=v_{\pi^\nu_\bfb}(x, y), 
\end{align}
so $\cV^{\nu}_{\pi^\nu_{\un{\bfb}_{V^\nu}}}v_{\pi^\nu_\bfb}$ is well-defined and \eqref{pib_ineq} with $n=1$ holds. 
If $\cV^{\nu, (k)}_{\pi^\nu_{\un{\bfb}_{V^\nu}}}$ is well-defined and \eqref{pib_ineq} with $n=k$ is true, then by the assumption, the definition of $\cV^\nu$ and \eqref{tokucho},
\begin{align}
\cV^{\nu, (k+1)}_{\pi^\nu_{\un{\bfb}_{V^\nu}}}v_{\pi^\nu_\bfb}(x, y)
\leq \cV^{\nu}_{\pi^\nu_{\un{\bfb}_{V^\nu}}}v_{\pi^\nu_\bfb}(x, y)
\leq \cV^\nu v_{\pi^\nu_\bfb}(x, y)=v_{\pi^\nu_\bfb}(x, y), 
\end{align}
so $\cV^{\nu, (k+1)}_{\pi^\nu_{\un{\bfb}_{V^\nu}}}v_{\pi^\nu_\bfb}$ is well-defined and \eqref{pib_ineq} with $n=k+1$ holds. 
Since $v_{\pi^\nu_\bfb}$ is not greater than $V^\nu$, we have, for $n\in\bN$ and $(x, y)\in\bR\times E$, 
\begin{align}
&0\leq V^{\nu}(x, y) -v_{\pi^\nu_\bfb}(x, y)
\leq \cV^{\nu, (n)}_{\pi^\nu_{\un{\bfb}_{V^\nu}}}V^\nu(x, y) -\cV^{\nu, (n)}_{\pi^\nu_{\un{\bfb}_{V^\nu}}}v_{\pi^\nu_\bfb}(x, y) \\
&=\bfE_{(x, y)}\sbra{  e^{-\frq(T^\nu_n)} \rbra{V^\nu(U^{\pi^\nu_{\un{\bfb}_{V^{\nu}}}}_{T^\nu_n}, Y_{T^\nu_n})-v_{\pi^\nu_\bfb}(U^{\pi^\nu_{\un{\bfb}_{V^{\nu}}}}_{T^\nu_n}, Y_{T^\nu_n})}}
\leq\bfE_{(x, y)}\sbra{ e^{-\frq(T^\nu_n)} \rbra{U^{\pi^\nu_{\un{\bfb}_{V^{\nu}}}}_{T^\nu_n}+2B^\nu}}
. \label{ThmB01_002}
\end{align}
Here, the second inequality follows from \eqref{Lem310_003} and \eqref{pib_ineq}, the equality from \eqref{periodicMarkov}, and the last inequality from \eqref{finiteness}. 
By \eqref{ThmB01_002} with $n\to\infty$ and \eqref{limsup_result}, 
we have, for $(x, y)\in \bR \times E$, 
$V^{\nu}(x, y) -v_{\pi^\nu_\bfb}(x, y)=0$. 
The proof is complete. 
\end{proof}
\par
The following discussion introduces additional tools and two technical lemmas. The technicalities in this
part mainly arise from the possible lack of smoothness of the
additive component and can largely be avoided, for instance, when $X$ has a non-trivial Brownian component. Thus, on a first reading, it may be enough to focus on the main statements.
\par
Let $p$ be a measurable function from $E$ to $[0, 1]$. 
Let $\{\lambda^p(k): k\in\bN\cup\{0\}\}$ be the family of independent and identically distributed random variables, each uniformly distributed on $[0, 1]$, and assume that this family is independent of $\{X_t:t\geq 0\}$, $\{Y_t:t\geq 0\}$ and $N^\nu$.
\begin{align}
\Lambda^p(y, k) = 
\begin{cases}
0,\qquad &\text{if } \lambda^p(k) \leq p(y), \\
1,\qquad &\text{if }\lambda^p(k)> p(y).  
\end{cases}
\end{align}
We define, for $n\in\bN$ and non-negative measurable function $\bfb$, 
\begin{align}
\cT^{n}_{p, \bfb}:=\min\{T^\nu_k > \cT^{n-1}_{p, \bfb}: 
U^{\pi^{\nu}_\bfb}_{T^\nu_k} =\bfb(Y_{T^\nu_k}), \
\Delta L^{\pi^{\nu}_\bfb}_{T^\nu_k}+\Lambda^p(Y_{\cT^{n-1}_{p, \bfb}}, n-1)>0
\},
\end{align}
where $\cT^{0}_{p, \bfb}:=0$. 
We also define 
$\cK_\bfb(p, t):=\max\{k \in \bN:\cT^{k}_{p,\bfb}\leq t\}$ 
for $t\geq 0$,
where $\max\varnothing=0$, 
\begin{align}
\varkappa^{\nu,\bfb}_{p,0}:=\inf\{ t \geq 0: 
U^{\pi^{\nu}_\bfb}_{t} =0, \
\Lambda^p(Y_{\cT^{\cK_\bfb(p, t)}_{p,\bfb}}, \cK_\bfb(p, t))=0
\}\land\inf\{t>0: R^{\pi^{\nu}_\bfb}_{t}>0\},
\end{align}
and 
\begin{align}
\varrho^{\nu, \bfb}_{p}( x, y):= \beta \bfE_{(x, y)}\sbra{e^{-\frq(\varkappa^{\nu,\bfb}_{p,0})}}, \qquad (x, y)\in\bR\times E. 
\end{align}
Furthermore, we define
\begin{align}
\kappa^{\nu,\bfb}_{p, 0}:=\rbra{T^{\nu,+}_{\bfb-}+\varkappa^{\nu,\bfb}_{p,0}\circ\theta_{T^{\nu,+}_{\bfb-}}}\land\inf\{t>0: R^{\pi^{\nu}_\bfb}_{t}>0\}, 
\end{align}
and
\begin{align}
\rho^{\nu, \bfb}_{p}( x, y):= \beta \bfE_{(x, y)}\sbra{e^{-\frq(\kappa^{\nu,\bfb}_{p,0})}}, \qquad (x, y)\in\bR\times E. 
\end{align}
Here, since $\Lambda^p$ is independent of the process to which the shift is applied, we regard it as invariant under the shift operator $\theta_{T^{\nu,+}_{\bfb-}}$, that is,
$\Lambda^p( y, k)\circ \theta_{T^{\nu,+}_{\bfb-}}=\Lambda^p( y, k)$ for $y\in E$ and $k\in\bN\cup\{0\}$. 
For instance, we may take $\Lambda^p$ to be defined on an auxiliary probability space and work on the product space, with the shift operator acting only on the original coordinate and leaving $\Lambda^p$ unchanged. 
\begin{Lem}\label{LemB01a}
The map $x\mapsto\rho_p^{\nu, \bfb}( x, y)$ is non-increasing for $y\in E$. 
\end{Lem}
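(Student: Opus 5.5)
We will prove the monotonicity by a pathwise coupling in the initial capital, in the spirit of ($\sharp$) in the proof of Lemma \ref{Lem306}. We fix $y\in E$, $x\in\bR$ and $\varepsilon>0$. Using \eqref{200_01}, we realise both starting points on the same probability space by working under $\bfP_{(x,y)}$ with $X^{(0)}=X$ and $X^{(\varepsilon)}=X+\varepsilon$. Both copies use the same $Y$, the same renewal times $T^\nu_k$ and the same uniform variables $\lambda^p(k)$. Let $\kappa^{(\varepsilon)}$ denote $\kappa^{\nu,\bfb}_{p,0}$ computed from $X^{(\varepsilon)}$. It then suffices to show that, almost surely,
\begin{align}
\kappa^{(0)}\leq \kappa^{(\varepsilon)} .
\end{align}
Since $\frq$ is non-decreasing, this gives $e^{-\frq(\kappa^{(\varepsilon)})}\leq e^{-\frq(\kappa^{(0)})}$, and taking expectations yields $\rho^{\nu,\bfb}_p(x+\varepsilon,y)\leq\rho^{\nu,\bfb}_p(x,y)$.

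To compare the two stopping times, we first compare the entrance times. Because $X^{(\varepsilon)}\geq X^{(0)}$ pathwise, we have $T^{(\varepsilon),+}_{\bfb-}\leq T^{(0),+}_{\bfb-}$ and $\tau^{(0),-}_0\leq\tau^{(\varepsilon),-}_0$. We also note that $\inf\{t>0:R_t>0\}$ is the first time the reflected process would go below $0$, that is, $\tau^-_0$ for the respective copy (after dividends, as in the MMPCB construction). We then split into cases.

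\emph{Case 1: $\tau^{(0),-}_0< T^{(0),+}_{\bfb-}$.} Before $T^{(0),+}_{\bfb-}$ no dividend is paid in the $0$-copy, so $U^{(0)}=X$ on that interval. We would show directly that $\kappa^{(0)}\leq\tau^{(0),-}_0$, while $\kappa^{(\varepsilon)}$ is at least the first time $U^{(\varepsilon)}$ reaches $0$. By ($\sharp$), $U^{(\varepsilon)}\geq U^{(0)}$ up to that point, and these processes coincide after merging.

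\emph{Case 2: $T^{(0),+}_{\bfb-}\leq\tau^{(0),-}_0$.} By ($\sharp$), the two controlled processes coincide from $T^{(0),+}_{\bfb-}$ on, since $U^{(\varepsilon)}-U^{(0)}=0$ there and $L$ and $R$ differ only by constants. The $\varepsilon$-copy may enter its barrier earlier, at $T^{(\varepsilon),+}_{\bfb-}$, and start running $\varkappa^{\nu,\bfb}_{p,0}$ from there. Between $T^{(\varepsilon),+}_{\bfb-}$ and $T^{(0),+}_{\bfb-}$ we have $U^{(\varepsilon)}\geq U^{(0)}\geq 0$. From $T^{(0),+}_{\bfb-}$ on the paths agree, so the only possible discrepancy is in the counters $\cT^n_{p,\bfb}$ and $\cK_\bfb(p,\cdot)$, that is, in which uniform $\lambda^p(k)$ is consulted.

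The main obstacle is this randomisation bookkeeping. Because the two copies may register a different number of barrier hits, the indices $k$ of the variables $\Lambda^p(\cdot,k)$ used at a given time can be misaligned, and a pathwise inequality could fail. I would try first to exploit the independence and i.i.d.\ structure of the $\lambda^p(k)$. Conditioning on $\cF_{T^{(0),+}_{\bfb-}}$ and using \eqref{periodicMarkov} together with the invariance of $\Lambda^p$ under $\theta$, the law of the remaining $\varkappa$-time after $T^{(0),+}_{\bfb-}$ should be the same whichever fresh index the copy starts from. This would replace the pathwise comparison by an inequality in expectation: $\kappa^{(\varepsilon)}$ is at least $T^{(0),+}_{\bfb-}$ plus an independent copy of $\varkappa^{\nu,\bfb}_{p,0}\circ\theta_{T^{(0),+}_{\bfb-}}$, which has the same conditional law as the residual of $\kappa^{(0)}$, or is $\infty$ if no zero is hit first.

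One boundary effect needs a separate check. The condition $U=0$ with $\Lambda^p=0$ may be met at a time in $[T^{(\varepsilon),+}_{\bfb-},T^{(0),+}_{\bfb-})$ only in the $\varepsilon$-copy. This is impossible, since on that interval $U^{(\varepsilon)}\geq U^{(0)}+{}$(something non-negative) and $U^{(0)}=X\geq 0$, and ties at $0$ are handled by the $\inf\{t>0:R_t>0\}$ term. Having checked this, we conclude the inequality in expectation and hence the claim.
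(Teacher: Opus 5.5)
Your proof is correct in substance, and on the merge event it takes a genuinely different route from the paper.

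\textbf{The paper's route.} The paper keeps the shared-coin coupling throughout and claims the pathwise ordering $\kappa^{(x_1),\bfb}_{p,0}\le\kappa^{(x_2),\bfb}_{p,0}$ for $x_1<x_2$. On $\{\kappa^{(x_1),\bfb}_{p,0}\ge\kappa^{(x_2),\bfb}_{p,0}\}$ it distinguishes three cases: the controlled processes differ just before $\kappa^{(x_2),\bfb}_{p,0}$; they coincide and $R^{\pi^{(x_2)}_{\bfb}}$ increases; or they coincide and it does not. In the last case it asserts that, after the dividend time at which the paths merged, ``the same value of $\Lambda^p$ is used for both processes''.

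\textbf{Your objection applies to exactly that step.} On that event the merge time is $\mu=T^{(x_1),+}_{\bfb-}$. There the lower copy starts its counter and consults $\lambda^p(0)$. If $T^{(x_2),+}_{\bfb-}<\mu$, the upper copy has $\Delta L^{\pi^{(x_2)}_{\bfb}}_\mu>0$, so $\mu=\cT^m_{p,\bfb}$ for it with some $m\ge1$, and it consults $\lambda^p(m)$. Suppose $\lambda^p(0)>p(Y_\mu)\ge\lambda^p(m)$ and the common path then touches $0$ without crossing before the next renewal time. This has positive probability, for example for a lattice-valued $X$. Then $\kappa^{(x_1),\bfb}_{p,0}>\kappa^{(x_2),\bfb}_{p,0}$, so the almost sure ordering fails for the coupling as defined.

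\textbf{Why your replacement works.} You compare pathwise on $\{\tau^{(0),-}_0<T^{(0),+}_{\bfb-}\}$. On $\{T^{(0),+}_{\bfb-}\le\tau^{(0),-}_0\}$ you use \eqref{periodicMarkov} at $\mu$ together with the fact that each copy's coins from its current index onward are i.i.d.\ uniform and independent of that copy's own past. This holds because the index $m$ is determined by the path and the earlier coins. It shows that the two contributions on this event are in fact equal. Since $\rho^{\nu,\bfb}_p$ depends only on the law of each $\kappa$ separately, this suffices.

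\textbf{Comparison.} The paper's pathwise argument can also be repaired. Attach the coin used at $\cT^n_{p,\bfb}=T^\nu_k$ to the renewal index $k$ rather than to $n$. This leaves the law of $\kappa^{\nu,\bfb}_{p,0}$ unchanged and makes both copies use the same coins after $\mu$, which gives an almost sure ordering. Your route yields only an ordering in law, but it works with the definitions exactly as stated.

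\textbf{One justification in your last paragraph needs fixing.} From $U^{(\varepsilon)}\ge U^{(0)}+(\text{something non-negative})$ and $U^{(0)}\ge0$ you only get $U^{(\varepsilon)}\ge0$. Moreover, a touch of $0$ without crossing is not caught by $\inf\{t>0:R_t>0\}$; it would trigger the clause ``$U=0$, $\Lambda^p=0$'' in $\varkappa^{\nu,\bfb}_{p,0}$. What you need is strict positivity:
\begin{itemize}
\item Before $T^{(0),+}_{\bfb-}\land\tau^{(0),-}_0$ neither copy injects capital, since $R^{(\varepsilon)}\le R^{(0)}=0$ there.
\item Hence $U^{(\varepsilon)}-U^{(0)}$ changes only at dividend times $T^\nu_k$ of the $\varepsilon$-copy.
\item At such a time the gap becomes $\bfb(Y_{T^\nu_k})-X_{T^\nu_k}>0$, because the $0$-copy is strictly below its barrier at renewal times before $T^{(0),+}_{\bfb-}$.
\end{itemize}
Thus $U^{(\varepsilon)}>U^{(0)}\ge0$ on $[0,T^{(0),+}_{\bfb-}\land\tau^{(0),-}_0)$. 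The same fact is what gives $\kappa^{(\varepsilon)}\ge\tau^{(0),-}_0$ in your Case 1.
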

\begin{Lem}\label{LemB01}
For $\bfb \in \Xi^\nu$, there exists a measurable function $p_\bfb$ from $E$ to $[0, 1]$ such that for $y\in E$, 
\begin{align}
\varrho^{\nu, \bfb}_{p_\bfb}( \bfb (y), y) =1,\qquad 
\rho^{\nu, \bfb}_{p_\bfb}(  x, y)
\begin{cases}
\geq 1,\qquad &x <\bfb(y),\\
\leq 1 ,\qquad &x>\bfb(y). 
\end{cases}
\label{LemB01_main}
\end{align}
\end{Lem}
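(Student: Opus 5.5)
The plan is to take $p_\bfb$ to be the randomisation weight that makes the barrier a ``fair'' point, namely the weight that mixes $\rho^{\nu,1}_\bfb$ and $\rho^{\nu,2}_\bfb$ into exactly $1$. Since $\bfb\in\Xi^\nu$, we have $\rho^{\nu,1}_\bfb(y)\leq 1\leq\rho^{\nu,2}_\bfb(y)$ for every $y\in E$. I would set
\begin{align}
p_\bfb(y):=\frac{1-\rho^{\nu,1}_\bfb(y)}{\rho^{\nu,2}_\bfb(y)-\rho^{\nu,1}_\bfb(y)}\ \text{ if }\rho^{\nu,2}_\bfb(y)>\rho^{\nu,1}_\bfb(y),\qquad p_\bfb(y):=1\ \text{ otherwise}.
\end{align}
This is a measurable $[0,1]$-valued function and satisfies $p_\bfb(y)\rho^{\nu,2}_\bfb(y)+(1-p_\bfb(y))\rho^{\nu,1}_\bfb(y)=1$. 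The heuristic is as follows. Under $\Lambda^p=0$, touching $0$ counts as ruin and exactly hitting $\bfb$ is not a renewal; this corresponds to $\tau^-_{0+}$ and $T^{\nu,+}_\bfb$, i.e.\ to $\rho^{\nu,2}_\bfb$. Under $\Lambda^p=1$ the roles are reversed, giving $\tau^-_0$, $T^{\nu,+}_{\bfb-}$ and $\rho^{\nu,1}_\bfb$.

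\textbf{Step 1: $h(y):=\varrho^{\nu,\bfb}_{p_\bfb}(\bfb(y),y)$ satisfies $h\equiv1$.}
\begin{itemize}
\item Start at $(\bfb(y),y)$. Until the first renewal time $\cT^1_{p,\bfb}$, the controlled process $U^{\pi^\nu_\bfb}$ coincides with $X$, because no dividend is paid before a strict or weak exceedance. The coin $\Lambda^p(y,0)$ is fixed during this cycle.
\item On $\{\Lambda^p(y,0)=0\}$ the first cycle ends either at $T^{\nu,+}_\bfb$ (renewal) or at $\tau^-_{0+}$ (ruin). On $\{\Lambda^p(y,0)=1\}$ it ends at $T^{\nu,+}_{\bfb-}$ or at $\tau^-_0$.
\item At a renewal the process sits at $\bfb(Y_{\cT^1})$ and a fresh coin indexed by $Y_{\cT^1}$ is used. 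Using \eqref{periodicMarkov} at the renewal time together with the independence of the $\lambda^p(k)$, I expect the linear equation
\begin{align}
h(y)=p(y)\bigl(\beta A_2(y)+\bfE_{(\bfb(y),y)}[e^{-\frq(T^{\nu,+}_\bfb)}h(Y_{T^{\nu,+}_\bfb});T^{\nu,+}_\bfb<\tau^-_{0+}]\bigr)+(1-p(y))\bigl(\beta A_1(y)+\bfE_{(\bfb(y),y)}[e^{-\frq(T^{\nu,+}_{\bfb-})}h(Y_{T^{\nu,+}_{\bfb-}});T^{\nu,+}_{\bfb-}\leq\tau^-_0]\bigr).
\end{align}
Here $A_1,A_2$ are the ruin terms appearing in $\rho^{\nu,1}_\bfb,\rho^{\nu,2}_\bfb$.
\item Substituting $h\equiv1$ gives the right-hand side $p\rho^{\nu,2}_\bfb+(1-p)\rho^{\nu,1}_\bfb=1$, so $h\equiv1$ is a solution.
\item For uniqueness among bounded measurable solutions (note $0\leq h\leq\beta$), I would use that every renewal happens at some $T^\nu_k\geq T^\nu$. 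Hence the associated operator is a contraction in sup-norm with constant $\bfE[e^{-qT^\nu}]<1$, using $\bfq>q$.
\end{itemize}
This step is the main obstacle. It requires writing the regeneration rigorously: the strong-Markov-type identity at $\cT^1_{p,\bfb}$, including the bookkeeping that $\Lambda^p(Y_{\cT^{\cK}},\cK)$ is re-indexed consistently after the shift. It also requires checking that $\inf\{t>0:R^{\pi^\nu_\bfb}_t>0\}$ reduces to $\tau^-_0$ within a cycle.

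\textbf{Step 2: reduce $\rho^{\nu,\bfb}_{p_\bfb}$ to the right derivative in Lemma \ref{right_derivative_lemma}.}
\begin{itemize}
\item For $x<0$, capital is injected at time $0$, so $\kappa^{\nu,\bfb}_{p,0}=0$ and $\rho^{\nu,\bfb}_{p_\bfb}(x,y)=\beta\geq1$.
\item For $x\geq0$, before $T^{\nu,+}_{\bfb-}$ the process equals $X$, and $R^{\pi^\nu_\bfb}$ first becomes positive at $\tau^-_0$.
\item At $T^{\nu,+}_{\bfb-}$ the process is reset to $\bfb(Y_{T^{\nu,+}_{\bfb-}})$. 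Applying \eqref{periodicMarkov} there and then Step 1 gives
\begin{align}
\rho^{\nu,\bfb}_{p_\bfb}(x,y)=\bfE_{(x,y)}[e^{-\frq(T^{\nu,+}_{\bfb-})};T^{\nu,+}_{\bfb-}\leq\tau^-_0]+\beta\bfE_{(x,y)}[e^{-\frq(\tau^-_0)};\tau^-_0<T^{\nu,+}_{\bfb-}].
\end{align}
In particular $\rho^{\nu,\bfb}_{p_\bfb}(\bfb(y),y)=\rho^{\nu,1}_\bfb(y)\leq1$.
\end{itemize}

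\textbf{Step 3: the inequalities in \eqref{LemB01_main}.}
\begin{itemize}
\item For $x>\bfb(y)$, Lemma \ref{LemB01a} (monotonicity) gives $\rho^{\nu,\bfb}_{p_\bfb}(x,y)\leq\rho^{\nu,1}_\bfb(y)\leq1$.
\item For $0\leq x<\bfb(y)$, use the additivity \eqref{200_01} to compare with a start from $\bfb(y)$. Starting at $x$, the event $\{X_{T^\nu_k}\geq\bfb(Y_{T^\nu_k})\}$ corresponds to an exceedance by $\bfb(y)-x>0$ from level $\bfb(y)$; this is contained in the strict exceedance event. Likewise, going below $0$ from $x$ corresponds to going below $\bfb(y)-x>0$ from level $\bfb(y)$.
\item As $x\uparrow\bfb(y)$, these events converge to $T^{\nu,+}_\bfb$ and $\tau^-_{0+}$ respectively, since $\tau^{(\varepsilon),-}_{0+}\to\tau^-_0$ as in the proof of Lemma \ref{Lem306}. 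Dominated convergence then gives $\lim_{x\uparrow\bfb(y)}\rho^{\nu,\bfb}_{p_\bfb}(x,y)=\rho^{\nu,2}_\bfb(y)\geq1$.
\item Combined with the monotonicity of Lemma \ref{LemB01a}, this yields $\rho^{\nu,\bfb}_{p_\bfb}(x,y)\geq1$ for all $x<\bfb(y)$.
\end{itemize}
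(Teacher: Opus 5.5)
Your proposal is correct and follows the paper's proof. It uses the same mixing weight $p_\bfb$ and the same reduction, via \eqref{periodicMarkov} and $\varrho^{\nu,\bfb}_{p_\bfb}(\bfb(\cdot),\cdot)\equiv1$, of $\rho^{\nu,\bfb}_{p_\bfb}(x,y)$ to the expression in Lemma~\ref{right_derivative_lemma}, then Lemma~\ref{LemB01a} plus the behaviour at $\bfb(y)$ for the inequalities; your sup-norm contraction in Step 1 is just the fixed-point form of the paper's iteration $\varrho^{\nu,\bfb}_{p_\bfb,(n)}\equiv1\to\varrho^{\nu,\bfb}_{p_\bfb}(\bfb(y),y)$. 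One small fix in Step 3: you need the increasing limit $\inf\{t>0:X_t<\varepsilon\}\uparrow\sigma\leq\tau^-_{0+}$ under $\bfP_{(\bfb(y),y)}$, not the decreasing convergence from Lemma~\ref{Lem306}, and $\sigma<\tau^-_{0+}$ can occur if $X$ creeps down to $0$ and jumps up at that instant; but ruin at $\sigma$ pays $\beta e^{-\frq(\sigma)}$, which dominates both possible outcomes from $\bfb(y)$ since $\beta>1$, so Fatou's lemma still gives $\liminf_{x\uparrow\bfb(y)}\rho^{\nu,\bfb}_{p_\bfb}(x,y)\geq\rho^{\nu,2}_\bfb(y)\geq1$, which is all you need.
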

The proofs of these lemmas are rather technical. However, as mentioned above, they become much simpler when the additive component has a certain degree of smoothness. Since some readers may wish to skip these technical details, we defer the proofs to Appendix \ref{lemma_proof} of the Supplementary Material.
\par 
The following lemma is the main result of Step 3.
\begin{Lem}
$\Xi^\nu$ and $\Xi^\nu_{V^\nu}$ are equal. 
\end{Lem}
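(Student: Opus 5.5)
We prove the two inclusions separately, using throughout the right-derivative formula of Lemma \ref{right_derivative_lemma}. Evaluated at $x=\bfb(y)$, it gives exactly
\begin{align}
v^\prime_{\pi^\nu_\bfb+}(\bfb(y),y)=\rho^{\nu,1}_\bfb(y).
\end{align}
The analogous left derivative at $\bfb(y)$ should be $\rho^{\nu,2}_\bfb(y)$. We obtain it by repeating the perturbation argument ($\sharp$) with $\varepsilon<0$: shifting down, one replaces $T^{\nu,+}_{\bfb-}$ by $T^{\nu,+}_{\bfb}$ and $\tau^-_0$ by $\tau^-_{0+}$.

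\emph{Inclusion $\Xi^\nu_{V^\nu}\subset\Xi^\nu$.} Let $\bfb\in\Xi^\nu_{V^\nu}$. By Lemma \ref{Lem311}, or rather its corollary for MMPCB strategies, $\pi^\nu_\bfb$ is optimal, so $v_{\pi^\nu_\bfb}=V^\nu\in\Gamma^\nu$. Since $\bfb(y)\ge\un{\bfb}_{V^\nu}(y)$ and $V^{\nu\prime}_+$ is non-increasing, we get $V^{\nu\prime}_+(\bfb(y),y)\le 1$. Indeed, if $\bfb(y)>\un{\bfb}_{V^\nu}(y)$ this follows from the definition, and if equality holds, right-continuity of the right derivative of a concave function gives $V^{\nu\prime}_+(\un{\bfb},y)\le1$. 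By the formula above, $\rho^{\nu,1}_\bfb(y)\le1$. Symmetrically, $\bfb(y)\le\bar{\bfb}_{V^\nu}(y)$ gives $V^{\nu\prime}_-(\bfb(y),y)\ge1$, hence $\rho^{\nu,2}_\bfb(y)\ge1$. This holds for every $y$, so $\hat E^\bfb=E$ and $\bfb\in\Xi^\nu$.

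\emph{Inclusion $\Xi^\nu\subset\Xi^\nu_{V^\nu}$.} Let $\bfb\in\Xi^\nu$. The plan is to apply Lemma \ref{LemA01}, so we must show that $v_{\pi^\nu_\bfb}\in\Gamma^\nu$ and that $\bfb(y)\in[\un{\bfb}_{v_{\pi^\nu_\bfb}}(y),\bar{\bfb}_{v_{\pi^\nu_\bfb}}(y)]$.
\begin{itemize}
\item[(a)] \emph{Representation of the derivative.} The delicate point is concavity, i.e.\ that $x\mapsto v^\prime_{\pi^\nu_\bfb+}(x,y)$ is non-increasing and crosses level $1$ at $\bfb(y)$. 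This is what Lemmas \ref{LemB01a} and \ref{LemB01} provide. Take $p_\bfb$ from Lemma \ref{LemB01}. Starting below $\bfb(y)$, paths that reach $\bfb$ at a dividend time are continued by the randomized rule $\Lambda^{p_\bfb}$. Using $\varrho^{\nu,\bfb}_{p_\bfb}(\bfb(y),y)=1$ and the strong Markov property at $T^{\nu,+}_{\bfb-}$, we expect to show that
\begin{align}
v^\prime_{\pi^\nu_\bfb+}(x,y)=\rho^{\nu,\bfb}_{p_\bfb}(x,y).
\end{align}
The idea is that after the first dividend opportunity with $X\ge\bfb$, the marginal unit is either paid out (value $1$) or kept; in both cases its value equals $\varrho=1$ in expectation, which lets us splice in the randomized stopping rule.
\item[(b)] \emph{Concavity and the bounds.} Lemma \ref{LemB01a} then gives monotonicity of $v^\prime_{\pi^\nu_\bfb+}(\cdot,y)$, hence concavity via \cite[Theorem 6.4]{HirLem2001}. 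The bounds in condition (ii) follow from $\rho^{\nu,\bfb}_{p_\bfb}\in[0,\beta]$ together with the value $\beta$ for $x<0$. Condition (iii) of $\Gamma^\nu$ follows from Lemma \ref{Lem304}.
\item[(c)] \emph{Location of $\bfb$.} By \eqref{LemB01_main}, the derivative is $\ge1$ for $x<\bfb(y)$ and $\le1$ for $x>\bfb(y)$. Hence $\un{\bfb}_{v_{\pi^\nu_\bfb}}(y)\le\bfb(y)\le\bar{\bfb}_{v_{\pi^\nu_\bfb}}(y)$.
\end{itemize}
Lemma \ref{LemA01} then gives $v_{\pi^\nu_\bfb}=V^\nu$. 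The same crossing property, now read for $V^\nu$, gives $\un{\bfb}_{V^\nu}(y)\le\bfb(y)\le\bar{\bfb}_{V^\nu}(y)$, so $\bfb\in\Xi^\nu_{V^\nu}$.

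The main obstacle is step (a), identifying $v^\prime_{\pi^\nu_\bfb+}$ with $\rho^{\nu,\bfb}_{p_\bfb}$ away from $x=\bfb(y)$. There the derivative of the barrier strategy's value is not monotone a priori, because $X$ may lack smoothness. I would first try to prove the identity by splitting at $T^{\nu,+}_{\bfb-}\wedge\tau^-_0$, applying ($\sharp$) on each piece and \eqref{periodicMarkov} at $T^{\nu,+}_{\bfb-}$, and using $\varrho=1$ to absorb the atom at $U=\bfb(Y)$.
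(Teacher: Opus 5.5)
Your proposal is correct and follows essentially the same route as the paper. For $\Xi^\nu_{V^\nu}\subset\Xi^\nu$ you use the one-sided derivatives of $v_{\pi^\nu_\bfb}=V^\nu$ at $\bfb(y)$ obtained via ($\sharp$); for $\Xi^\nu\subset\Xi^\nu_{V^\nu}$ you identify $v^\prime_{\pi^\nu_\bfb+}=\rho^{\nu,\bfb}_{p_\bfb}$, apply Lemmas \ref{LemB01a} and \ref{LemB01}, and conclude with Lemma \ref{LemA01}. The step you flag as the main obstacle is immediate in the paper, and no new perturbation argument is needed: since \eqref{right_derivative} holds for every $x$, one inserts the factor $\varrho^{\nu,\bfb}_{p_\bfb}(\bfb(Y_{T^{\nu,+}_{\bfb-}}),Y_{T^{\nu,+}_{\bfb-}})=1$ into its first term (where the controlled process sits at the barrier) and applies \eqref{periodicMarkov} at $T^{\nu,+}_{\bfb-}$.
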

\begin{proof}
We prove (1) ``if $\bfb\in\Xi^\nu_{V^\nu}$, then $\bfb\in\Xi^\nu$'' and (2) ``if $\bfb\in\Xi^\nu$, then $\bfb\in\Xi^\nu_{V^\nu}$'' in turn. 
\par 
(1) We assume $\bfb\in\Xi^\nu_{V^\nu}$ and prove $\bfb\in\Xi^\nu$. 
By the definition of $\Xi^\nu_{V^\nu}$, we have, for $ y\in E$,   
\begin{align}
\lim_{\varepsilon\downarrow 0}\frac{v_{{\pi}^{\nu}_{\bfb}}(\bfb(y)+\varepsilon, y)-v_{{\pi}^{\nu}_{\bfb}}(\bfb(y), y)}{\varepsilon}
\leq 1\leq
\lim_{\varepsilon\downarrow 0}\frac{v_{{\pi}^{\nu}_{\bfb}}(\bfb(y), y)-v_{{\pi}^{\nu}_{\bfb}}(\bfb(y)-\varepsilon, y)}{\varepsilon}. 
 \label{maxMMPCB}
\end{align}
By \eqref{right_derivative}, we have, for $y\in E$, 
\begin{align}
\lim_{\varepsilon\downarrow0}&
\frac{v_{{\pi}^{\nu}_{\bfb}}(\bfb(y)+\varepsilon, y)-v_{{\pi}^{\nu}_{\bfb}}(\bfb(y), y)}{\varepsilon}\\
&=\bfE_{(\bfb(y),y )}\sbra{ e^{-\frq({T}^{ \nu, +}_{\bfb-})} ;{T}^{ \nu, +}_{\bfb-} \leq {\tau}^{  -}_0}+
\beta\bfE_{(\bfb(y),y )}\sbra{ e^{-\frq({\tau}^{  -}_0)} ;{\tau}^{  -}_0<{T}^{ \nu, +}_{\bfb-} }.
\label{right_derivative_02}
\end{align}
By employing an argument similar to the proof of Lemma \ref{right_derivative_lemma}, we have, for $ y \in E$, 
\begin{align}
\lim_{\varepsilon\downarrow0}&
\frac{v_{{\pi}^{\nu}_{\bfb}}(\bfb(y), y)-v_{{\pi}^{\nu}_{\bfb}}(\bfb(y)-\varepsilon, y)}{\varepsilon} 
=\lim_{\varepsilon\downarrow0}
\frac{v_{{\pi}^{(0)}_{\bfb}}(\bfb(y), y)-v_{{\pi}^{(-\varepsilon)}_{\bfb}}(\bfb(y), y) }{\varepsilon}
\\ 
 &=\bfE_{(\bfb(y),y )}\sbra{ e^{-\frq({T}^{ \nu, +}_{\bfb})} ;{T}^{ \nu, +}_{\bfb} < {\tau}^{  -}_{0+}}+
\beta\bfE_{(\bfb(y),y )}\sbra{ e^{-\frq({\tau}^{  -}_{0+})} ;{\tau}^{  -}_{0+}< {T}^{ \nu, +}_{\bfb} }.
\label{left_derivative}
\end{align}
By \eqref{maxMMPCB}, \eqref{right_derivative_02} and \eqref{left_derivative},
we obtain $\bfb\in\Xi^\nu$. 
\par
(2) We assume $\bfb\in\Xi^\nu$ and prove $\bfb\in\Xi^\nu_{V^\nu}$. 
By the proof of Lemma \ref{right_derivative_lemma}, we have, for $(x, y) \in \bR\times E$, 
\begin{align}
v^{L\prime}_{\pi^\nu_\bfb +}(x, y) := &\lim_{\varepsilon\downarrow0}\frac{v^L_{\pi^\nu_\bfb}(x+\varepsilon, y)-v^L_{\pi^\nu_\bfb}(x, y)}{\varepsilon}
=\bfE_{(x, y)}\sbra{e^{-\frq (T^{\nu,+}_{\bfb-})};T^{\nu,+}_{\bfb-} \leq\tau^-_0}, \\
v^{R\prime}_{\pi^\nu_\bfb +}(x, y) := &\lim_{\varepsilon\downarrow0}\frac{v^R_{\pi^\nu_\bfb}(x+\varepsilon, y)-v^R_{\pi^\nu_\bfb}(x, y)}{\varepsilon}
=-\bfE_{(x, y)}\sbra{e^{-\frq (\tau^-_0)};\tau^-_0<T^{\nu,+}_{\bfb-} }.
\end{align}
Both $x\mapsto v^{L\prime}_{\pi^\nu_\bfb +}(x, y)$ and $x\mapsto v^{R\prime}_{\pi^\nu_\bfb +}(x, y)$ are non-decreasing for any $y\in E$. 
Hence, by \cite[Theorem 6.4]{HirLem2001}, the functions $x\mapsto v^L_{\pi^\nu_\bfb}(x, y) $ and $x\mapsto v^R_{\pi^\nu_\bfb}(x, y) $ are convex, and their right derivatives are Radon--Nikodym densities of these functions, respectively, for any $y \in E$. 
Thus, for fixed $y\in E$, the function $x\mapsto v^\prime_{\pi^\nu_\bfb +}(x, y)$, defined by $v^\prime_{\pi^\nu_\bfb +}(x, y):=v^{L\prime}_{\pi^\nu_\bfb +}(x, y)-\beta v^{R\prime}_{\pi^\nu_\bfb +}(x, y)$, is a density of $x\mapsto v_{\pi^\nu_\bfb}(x , y)$. 
By the identity in \eqref{LemB01_main} and \eqref{periodicMarkov}, we have, for $(x, y)\in\bR\times E$,  
\begin{align}
&v^\prime_{\pi^\nu_\bfb +}(x, y)=\bfE_{(x, y)}\sbra{e^{-\frq (T^{\nu,+}_{\bfb-})}\varrho^{\nu, \bfb}_{p_\bfb}( \bfb (Y_{T^{\nu,+}_{\bfb-}}), Y_{T^{\nu,+}_{\bfb-}});T^{\nu,+}_{\bfb-} \leq\tau^-_0}
+\beta \bfE_{(x, y)}\sbra{e^{-\frq (\tau^-_0)};\tau^-_0<T^{\nu,+}_{\bfb-} }\\
&\ =\sum_{k\in\bN}\bfE_{(x, y)}\sbra{e^{-\frq (T^{\nu}_k)}\varrho^{\nu, \bfb}_{p_\bfb}( \bfb (Y_{T^{\nu}_k}), Y_{T^{\nu}_k});T^{\nu,+}_{\bfb-}=T^\nu_k, T^{\nu}_k \leq\tau^-_0}
+\beta \bfE_{(x, y)}\sbra{e^{-\frq (\tau^-_0)};\tau^-_0<T^{\nu,+}_{\bfb-} }\\
&\ =\sum_{k\in\bN}\beta\bfE_{(x, y)}\sbra{e^{-\frq (\kappa^{\nu,\bfb}_{p_\bfb,0})};T^{\nu,+}_{\bfb-}=T^\nu_k, T^{\nu}_k \leq\tau^-_0}
+\beta \bfE_{(x, y)}\sbra{e^{-\frq (\tau^-_0)};\tau^-_0<T^{\nu,+}_{\bfb-} }
=\rho^{\nu, \bfb}_{p_\bfb}(  x, y).
\label{v_right_derivative}
\end{align}  
Since $x\mapsto v^\prime_{\pi^\nu_\bfb+}(x, y)$ is non-increasing by \eqref{v_right_derivative} and Lemma \ref{LemB01a}, 
$x\mapsto v_{\pi^\nu_\bfb}(x, y)$ is concave. 
In addition, $v_{\pi^\nu_\bfb}$ satisfies condition (ii) for membership in $\Gamma^\nu$ by \eqref{v_right_derivative} and condition (iii) for membership in $\Gamma^\nu$ by \eqref{upperbound_by_0strategy}. Hence, $v_{\pi^\nu_\bfb}$ belongs to $\Gamma^\nu$. 
By Lemma \ref{LemB01}, we have $\bfb(y)\in[\un{\bfb}_{v_{\pi^\nu_\bfb}}(y), \bar{\bfb}_{v_{\pi^\nu_\bfb}}(y)]$ for $y\in E$. 
By Lemma \ref{LemA01}, $\pi^\nu_\bfb$ is an optimal strategy
and thus we have $\un{\bfb}_{v_{\pi^\nu_\bfb}}(y)=\un{\bfb}_{V^\nu}(y)$ and $ \bar{\bfb}_{v_{\pi^\nu_\bfb}}(y)=\bar{\bfb}_{V^\nu}(y)$ for $y \in E$, which implies $\bfb\in\Xi^\nu_{V^\nu}$. The proof is complete. 
\end{proof}
Finally, assuming that $\nu$ has an exponential distribution with intensity $r>0$, we prove Theorem \ref{Thm303}. 
To that end, 
we prove the following lemma.
\begin{Lem}\label{Lem317}
We fix a non-negative measurable function $\bfb$ on $E$ and 
a measurable function $p$ from $E$ to $[0,1]$. 
The map $t\mapsto \rho^{\nu, \bfb}_p (X_t, Y_t)$ is right-continuous on $[0,  \tau^-_{0+})$, $\bfP_{(x, y)}$-a.s. for any $(x, y)\in(0, \infty)\times E$. 
\end{Lem}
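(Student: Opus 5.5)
The idea is to write $\rho^{\nu,\bfb}_p$ as a function of the form $h(x,y)=\bfE_{(x,y)}[e^{-\frq(\zeta)}F]$ built from quantities that are excessive (or differences of excessive functions) for a suitable killed version of $(X,Y)$. We then use the basic property of right processes recalled after Theorem \ref{Thm303}: for every $\alpha>0$ and every $\alpha$-excessive function $f$, the process $t\mapsto f(X_t,Y_t)$ is right-continuous a.s.

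The exponential assumption on $\nu$ is what makes this work. With $r_1\sim\mathrm{Exp}(r)$, the first dividend opportunity $T^\nu$ is independent and memoryless. The triple $(X,Y,N^\nu)$ observed up to $T^\nu$ is therefore a right process, and the next-opportunity structure does not change along the path before $T^\nu$.

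\emph{Step 1 (Markov decomposition).} We decompose $\kappa^{\nu,\bfb}_{p,0}$ at the first opportunity $T^\nu$, using the strong Markov property and \eqref{periodicMarkov}. Let $\sigma:=T^\nu\land\tau^-_0$ and work on $[0,\tau^-_{0+})$ started from $x>0$. Before $T^\nu$ and before $\tau^-_0$ the controlled process coincides with $X$, no dividends are paid, and $\kappa^{\nu,\bfb}_{p,0}$ can occur only at $\tau^-_0$, through $R>0$. This gives
\begin{align}
\rho^{\nu,\bfb}_p(x,y)=\beta\bfE_{(x,y)}\sbra{e^{-\frq(\tau^-_0)};\tau^-_0<T^\nu}+\bfE_{(x,y)}\sbra{e^{-\frq(T^\nu)}G(X_{T^\nu},Y_{T^\nu});T^\nu\le\tau^-_0},
\end{align}
with $G(z,y')=\rho^{\nu,\bfb}_p(z,y')$ if $z<\bfb(y')$ and $G(z,y')=\varrho^{\nu,\bfb}_p(\bfb(y'),y')$ if $z\ge\bfb(y')$. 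Since $0\le G\le\beta$, $G$ is a bounded measurable function. By memorylessness we can rewrite the second term as
\begin{align}
\bfE_{(x,y)}\sbra{\int_0^{\tau^-_0}re^{-\frq(t)-rt}G(X_t,Y_t)\,\diff t}.
\end{align}
Hence $\rho^{\nu,\bfb}_p=\beta\phi+U^{r}_0G$, where $\phi(x,y)=\bfE_{(x,y)}[e^{-\frq(\tau^-_0)-r\tau^-_0}]$ and $U^r_0$ is the resolvent of $(X,Y)$ killed at $\tau^-_0$ with the multiplicative functional $e^{-\frq(t)-rt}$.

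\emph{Step 2 (right-continuity of each piece).} The function $U^r_0G$ is a potential of a bounded non-negative function for the killed process with killing rate $\bfq+r\ge q>0$. Such a potential is excessive for the subprocess killed at $\tau^-_0$, which is again a right process since $\tau^-_0$ is a terminal time. The function $\phi$ is likewise excessive for that multiplicative functional, because $\tau^-_0$ is a terminal time. The standard theory (Sharpe, Ch.~II, together with the property quoted in the paper) then gives a.s.\ right-continuity of $t\mapsto\phi(X_t,Y_t)$ and $t\mapsto U^r_0G(X_t,Y_t)$ up to the lifetime $\tau^-_0$. Because $\tau^-_{0+}\le\tau^-_0$, the sum is right-continuous on $[0,\tau^-_{0+})$.

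\emph{Main obstacle.} The hard part is justifying that the killing by $\frq$ and the killing at $\tau^-_0$ (a strict-inequality hitting time) preserve the right-process property, so that excessivity yields right-continuity. I would first reduce to a constant discount by subordinating with the multiplicative functional $e^{-\frq}$ (Sharpe's results on subprocesses of right processes). I would then note that on $(0,\infty)\times E$ the exit set $\{x<0\}$ is nearly Borel, so $\tau^-_0$ is a legitimate terminal time.

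A second, minor point is that the decomposition in Step 1 needs $\kappa^{\nu,\bfb}_{p,0}$ to be compatible with shifts. This holds by the convention that $\Lambda^p$ is shift-invariant and independent. If this causes trouble, I would prove the right-continuity directly from the representation above, which involves only $(X,Y)$.
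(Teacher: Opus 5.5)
Your argument is correct, but it reaches excessivity by a different route from the paper's.

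\emph{The paper's route.} The paper never decomposes $\rho^{\nu,\bfb}_p$. It kills at $\tau^-_{0+}$ via $M_t=e^{-\frq(t)-rt}1_{\{t<\tau^-_{0+}\}}$ and checks directly that $\rho^{\nu,\bfb}_p$ itself is excessive for $Q_tf(x,y)=\bfE_{(x,y)}[f(X_t,Y_t)M_t]$ on $(0,\infty)\times E$. Writing $e^{-rt}=\bfP(T^\nu>t)$ and using the Markov property at the deterministic time $t$ together with memorylessness, it obtains $Q_t\rho^{\nu,\bfb}_p(x,y)=\beta\bfE_{(x,y)}[e^{-\frq(\kappa^{\nu,\bfb}_{p,0})};t<T^\nu\land\tau^-_{0+}]$. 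This is at most $\rho^{\nu,\bfb}_p(x,y)$ and increases to it as $t\downarrow0$. Sharpe's (56.13)(iv) then gives the right-continuity.

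\emph{Your route.} You do a first-step analysis at $T^\nu\land\tau^-_0$ and use the exponential density of $T^\nu$ to turn the first-step term into an $r$-potential. This gives $\rho^{\nu,\bfb}_p=\beta\phi+U(rG)$ on $(0,\infty)\times E$, with both summands excessive for the subprocess killed at $\tau^-_0$.

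\emph{Comparison.} The paper's route is shorter. It only looks at the event $\{t<T^\nu\land\tau^-_{0+}\}$, where the strategy has not yet acted, so $\kappa^{\nu,\bfb}_{p,0}=t+\kappa^{\nu,\bfb}_{p,0}\circ\theta_t$ there is immediate. Your route needs the strong Markov property at $T^\nu$ and a continuation function $G$. However, you only use that $G$ is bounded and measurable, so the somewhat delicate identification of the post-dividend continuation with $\varrho^{\nu,\bfb}_p(\bfb(y'),y')$ is never actually needed. In return you get an explicit representation in which excessivity of the potential part is automatic. It also makes the role of the exponential clock transparent: it is exactly what converts the first-step term into a resolvent.

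Two small points.
\begin{enumerate}
\item $\phi$ is excessive only on the permanent set $\{(x,y):\bfP_{(x,y)}(\tau^-_0>0)=1\}$. For $x<0$ one has $\tau^-_0=0$ a.s., so $\phi(x,y)=1$ while the killed semigroup applied to $\phi$ gives $0$. As the paper does, restrict the state space, e.g.\ to $(0,\infty)\times E$, which is all you need since $\tau^-_{0+}\le\tau^-_0$.
\item The results you need are the subprocess results for exact multiplicative functionals that the paper cites, Sharpe's (56.6) and (56.13), rather than his Chapter II.
\end{enumerate}
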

We give the proof in Appendix \ref{lemma_proof} of the Supplementary Material, since it is essentially a calculation based on the observation that, from a suitable point of view, $\rho^{\nu, \bfb}_p $ is an excessive function.
\begin{proof}[Proof of Theorem \ref{Thm303}]
It suffices to prove that, for each fixed $a> 0$, 
\begin{align}
B_a:=\{y\in E: \un{\bfb}^\nu(y)>a\} \text{ is finely open with respect to }Y. \label{finely_open}
\end{align}
Fix $y \in B_a$. Then, we can also choose $x \in (a, \un{\bfb}^\nu(y))$ and $\tilde{\Omega}\in\cF$ such that $\bfP_{(x, y)}(\tilde{\Omega})=1$ and 
$t \mapsto (X_t(\omega), Y_t(\omega))$ and $t\mapsto \rho^{\nu,\un{\bfb}^\nu}_{p_{\un{\bfb}^\nu}} (X_t(\omega), Y_t(\omega))$ are both right-continuous on $[0, \tau^-_{0+})$ for all $\omega\in\tilde{\Omega}$ by Lemma \ref{Lem317}. 
By the definitions of $x$, $y$ and $\tilde{\Omega}$, together with the identity $\un{\bfb}^\nu=\un{\bfb}_{V^\nu}$ and the fact that $x\mapsto \rho^{\nu, \un{\bfb}^\nu}_{p_{\un{\bfb}^\nu}}( x, y)$ is the density of $x\mapsto V^\nu( x, y)$, 
we have $\rho^{\nu, \un{\bfb}^\nu}_{p_{\un{\bfb}^\nu}}(x, y)>1$, 
and for $\omega\in \tilde{\Omega}$, we may choose $\delta>0$ such that 
\begin{align}
X_t(\omega)>a, \quad \rho^{\nu,\un{\bfb}^\nu}_{p_{\un{\bfb}^{\nu}}} (X_t(\omega), Y_t(\omega))>1 \qquad t \in [0, \delta), 
\end{align}
which implies
\begin{align}
\un{\bfb}^\nu( Y_t(\omega))>a, \text{ and thus }Y_t(\omega)\in  B_a,
\text{ for }t\in[0, \delta).  
\end{align}
Therefore, we have 
\begin{align}
\bfP^{Y,(x)}_{y}(T_{ E\backslash B_a}>0) \geq \bfP^{Y,(x)}_{y}(\tilde{\Omega})=1,
\end{align}
where $T_A:=\inf\{t>0:Y_t \in A \}$ for $A\in\cB(E)$. This proves \eqref{finely_open}. 
Since \eqref{finely_open} holds for any $a \in \bR$, the function $\un{\bfb}^\nu$ is finely lower semi-continuous. 
\par
The proof of the fine upper semi-continuity of $\bar{\bfb}^\nu$ is analogous, with the relevant inequalities reversed, and is therefore omitted.
\par
The proof is complete. 
\end{proof}

\section{Approximation of the classical--classical setting}
\label{Sec_approximation}
In the previous section, we proceeded with the discussion in the setting of Section \ref{PCcase}.
In this section, we study the value function $V$ in the classical--classical setting of Section \ref{CCcase}, by approximating it by the value functions in the periodic--classical setting of Section \ref{PCcase}. This suggests the form of an optimal strategy in the classical--classical setting of Section \ref{CCcase}. We consider two kinds of approximations.

\subsection{Approximation by constant-interval periodic dividends}
For each $n\in\bN$, we write $\Pi^{\frac{1}{2^n}}$ for the set of admissible strategies in the periodic--classical setting, where $\nu=\delta_{\frac{1}{2^n}}$. 
Then, it is obvious that $\Pi^{\frac{1}{2^m}}\subset \Pi^{\frac{1}{2^n}}\subset \Pi$ for $m,n\in\bN$ with $m<n$. 
Thus, we also have, for $m,n\in\bN$ with $m<n$ and $(x, y)\in\bR\times E$,
\begin{align}
V^{\frac{1}{2^m}}(x, y)\leq V^{\frac{1}{2^n}}(x, y)\leq
V(x, y) ,  \label{value_function_ineq}
\end{align}
where $V^{\frac{1}{2^n}}(x, y)=\sup_{\pi\in\Pi^{\frac{1}{2^n}}}v_\pi(x, y)$ for $n\in\bN$ and $(x, y)\in\bR\times E$. 
\par
For $\pi \in \Pi$ and $(x, y)\in\bR\times E$, we may choose the sequence of strategies $\{\pi_n : n\in\bN\}$ such that $\pi_n\in\Pi^{\frac{1}{2^n}}$ for $n\in\bN$ and 
$\lim_{n\to\infty} v_{\pi_n}(x, y) =v_\pi (x, y)$. 
In fact, we define $\pi_n$ for $n\in\bN$ to satisfy
\begin{align}
L^{\pi_n}_t := L^\pi_{[t]_{\frac{1}{2^n}}-} ,\qquad R^{\pi_n}_t:=R^\pi_t,\qquad t\geq 0. 
\end{align}
Here $[t]_{\frac{1}{2^n}}:=\max\cbra{ \frac{k}{2^n}: k\in \bN, \frac{k}{2^n}\leq t}$ 
with $\max\varnothing=-\infty$ and $ L^\pi_{-\infty}=0$. 
Then the sequence $\{\pi_n : n\in\bN\}$ satisfies the above conditions. 
This fact and \eqref{value_function_ineq} imply the following proposition. 
\begin{Prop} \label{Prop401}
For $(x, y)\in\bR\times E$, we have 
$V^{\frac{1}{2^n}}(x, y) \uparrow V(x, y)$ 
as $n\uparrow\infty$. 
Thus, $x \mapsto V(x, y)$ is also concave for $y\in E$. 
\end{Prop}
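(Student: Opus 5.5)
The plan is to combine the monotonicity \eqref{value_function_ineq} with the approximation of an arbitrary $\pi\in\Pi$ by the strategies $\pi_n\in\Pi^{\frac{1}{2^n}}$ just constructed. Concavity then follows because each $V^{\frac{1}{2^n}}=V^\nu$ with $\nu=\delta_{1/2^n}$ lies in $\Gamma^\nu$ by Lemma \ref{Lem309}.

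\textbf{Monotone convergence.} Fix $(x,y)\in\bR\times E$. By \eqref{value_function_ineq}, the sequence $n\mapsto V^{\frac{1}{2^n}}(x,y)$ is non-decreasing and bounded above by $V(x,y)$. Hence the limit $V^\infty(x,y):=\lim_n V^{\frac{1}{2^n}}(x,y)$ exists and satisfies $V^\infty\le V$.

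For the reverse inequality, take any $\pi\in\Pi$ and the sequence $\pi_n$ defined above. I would first check the claimed properties of $\pi_n$.
\begin{itemize}
\item Admissibility in $\Pi^{\frac{1}{2^n}}$. The process $L^{\pi_n}$ jumps only at the grid times $k/2^n$, which are exactly the renewal times when $\nu=\delta_{1/2^n}$. It is adapted, since $L^\pi_{[t]-}$ is $\cF_t$-measurable. We have $U^{\pi_n}_t\ge U^\pi_t\ge0$ because $L^\pi_{[t]-}\le L^\pi_t$. Finally, $R^{\pi_n}=R^\pi$, so \eqref{adcon} holds.
\item Convergence of the dividend term. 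Each unit of dividend paid at time $s$ under $\pi$ is paid under $\pi_n$ at the first grid point strictly after $s$, or at $s$ itself if $s$ is a grid point and we use the left limit convention. This delay is at most $2^{-n}$. Hence
\[
\int e^{-\frq(t)}\diff L^{\pi_n}_t\uparrow\int e^{-\frq(t)}\diff L^\pi_t
\]
pathwise. The discount ratio over the delay is at least $e^{-\int_s^{s+2^{-n}}\bfq(Y_u)\diff u}$, which tends to $1$ because $\bfq$ is locally integrable along paths.
\item Passing to expectations. Monotone convergence gives $v^L_{\pi_n}\to v^L_\pi$, including the case where the limit is infinite. Since the $R$-parts are identical, $v_{\pi_n}(x,y)\to v_\pi(x,y)$.
\end{itemize}
It follows that $v_\pi(x,y)=\lim_n v_{\pi_n}(x,y)\le \lim_n V^{\frac{1}{2^n}}(x,y)$. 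Taking the supremum over $\pi\in\Pi$ gives $V\le V^\infty$, so $V^{\frac{1}{2^n}}\uparrow V$.

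\textbf{Concavity.} For each $n$, Lemma \ref{Lem309} applied with $\nu=\delta_{1/2^n}$ shows that $x\mapsto V^{\frac{1}{2^n}}(x,y)$ is concave. A pointwise limit of concave functions is concave, provided it stays finite. Here $V\le V^L$ with $V^L(x,y)\le(x\vee0)+B$, which follows from Lemma \ref{Lem304} applied through the reduction of Lemma \ref{Lem304a} to strategies in $\un\Pi$. So the limit is finite, and $x\mapsto V(x,y)$ is concave. If finiteness of $V$ for the classical case needs extra care, I would bound $v^L_\pi$ by the classical reflection strategy at $0$ and use \eqref{finite_ass}. This is the step I expect to be the main obstacle.

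The only other subtle point is the pathwise convergence of discounted dividends under delays. I would handle it by writing $\int e^{-\frq(t)}\diff L^{\pi_n}_t=\int e^{-\frq(\sigma_n(s))}\diff L^\pi_s$, where $\sigma_n(s)$ is the grid point at which the mass paid at $s$ is actually paid, and then apply monotone convergence in $n$.
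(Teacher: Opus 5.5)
This is the paper's argument: the nested classes give monotonicity, the delayed strategies $L^{\pi_n}_t=L^\pi_{[t]_{2^{-n}}-}$ give $v_{\pi_n}\to v_\pi$, and concavity passes to the limit from $V^{2^{-n}}\in\Gamma^{\delta_{2^{-n}}}$ (Lemma \ref{Lem309}). The local integrability of $\bfq(Y_\cdot)$, i.e.\ right-continuity of $\frq$, that you invoke is not among the paper's hypotheses, but the paper's proof relies on it tacitly too. Your finiteness step is unnecessary, because the concavity inequality survives a non-decreasing limit of finite concave functions with values in $(-\infty,\infty]$. As written it is also wrong: Lemmas \ref{Lem304}--\ref{Lem304a} concern $\Pi^\nu$ and give only a $\nu$-dependent constant $B^\nu$, with no bound uniform in $n$.
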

From the proofs of Theorems \ref{Thm302a} and \ref{Thm302}, 
there exists a lower endpoint of the interval of the optimal barriers, denoted by $\un{\bfb}_{V^{\frac{1}{2^n}}}$.
For these functions, we can obtain the following theorem. 
\begin{Thm}\label{barrier_approximation_1}
For $ y\in E$, the sequence $\cbra{\un{\bfb}_{V^{\frac{1}{2^n}}}( y):n\in\bN}$ is non-decreasing. 
\end{Thm}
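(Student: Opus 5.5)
The plan is to compare right derivatives rather than value functions. By definition, $\un{\bfb}_f(y)=\sup\{x\geq 0: f^\prime_+(x,y)>1\}$. So it suffices to show, with $h_n:=2^{-n}$ and $d_n(x,y):=(V^{h_n})^\prime_+(x,y)$, that
\begin{align}
d_n(x,y)\leq d_{n+1}(x,y),\qquad (x,y)\in\bR\times E .
\end{align}
Indeed, this gives $\{x: d_n(x,y)>1\}\subset\{x:d_{n+1}(x,y)>1\}$, and hence $\un{\bfb}_{V^{h_n}}(y)\leq \un{\bfb}_{V^{h_{n+1}}}(y)$.

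\textbf{Step 1: derivative operator.} For a measurable $d:\bR\times E\to[0,\beta]$ and $h>0$, I will define
\begin{align}
\Phi_h d(x,y):=\bfE_{(x,y)}\sbra{e^{-\frq(h\land\tau^-_0)}\rbra{(d\lor 1)(X_h,Y_h)1_{\{\un{X}_h\geq 0\}}+\beta 1_{\{\un{X}_h<0\}}}},
\end{align}
and $\Phi^0_h$ as the same expression without the $\lor 1$. Since $\tilde f^\prime_+=f^\prime_+\lor 1$ for $f\in\Gamma^\nu$ (by the definition of $\un{\bfb}_f$, $\bar{\bfb}_f$), formula \eqref{Lem306_004} reads $(\cV^{\nu}f)^\prime_+=\Phi_h(f^\prime_+)$ for $\nu=\delta_h$. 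Lemma \ref{Lem309} gives $V^{h_n}\in\Gamma^{\nu}$, and \eqref{Lem310_003} (with Lemma \ref{Lem303}) gives $\cV^\nu V^{h_n}=V^{h_n}$. Together these show that $d_n=\Phi_{h_n}d_n$.

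\textbf{Step 2: one coarse step is dominated by two fine steps.} Splitting $[0,h]$ at $h/2$ and using the Markov property of $(X,Y)$ at the deterministic time $h/2$, I will show $\Phi_h d=\Phi_{h/2}(\Phi^0_{h/2}d)$. On $\{\un{X}_{h/2}\geq 0\}$ the integrand restarts from $(X_{h/2},Y_{h/2})$. On the ruin event both sides give $\beta e^{-\frq(\tau^-_0)}$, since $\beta\geq 1$ makes the $\lor 1$ harmless there. The operators are monotone, and $\Phi^0_{h/2}d\leq\Phi_{h/2}d$. Hence $\Phi_h d\leq\Phi_{h/2}^2 d$. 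The economic meaning is that allowing a dividend opportunity at the midpoint can only increase marginal value.

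\textbf{Step 3: contraction and iteration.} On measurable functions valued in $[0,\beta]$, $\Phi_{h/2}$ maps into $[0,\beta]$. Because $d\mapsto d\lor1$ is $1$-Lipschitz and $\bfq>q$, it is a sup-norm contraction with factor $e^{-qh/2}$. Its unique fixed point is therefore $d_{n+1}$, by Step 1 applied with $h_{n+1}$. From $d_n=\Phi_{h_n}d_n\leq\Phi^2_{h_{n+1}}d_n$ and monotonicity, induction gives $d_n\leq\Phi^{2k}_{h_{n+1}}d_n$ for all $k$. Letting $k\to\infty$ and using the contraction yields $d_n\leq d_{n+1}$.

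\textbf{Main obstacle.} I expect the delicate point to be Step 2 together with the justification in Step 1 that $d_n$ is exactly a fixed point of $\Phi_{h_n}$ pointwise. This requires that \eqref{Lem306_004} holds for every $x$ (not just a.e.) when applied to $f=V^{h_n}$, and that the composition identity correctly accounts for ruin occurring before versus after $h/2$. Here $\un{X}$ must be reset via the additivity \eqref{200_01}, and the event $\{\un{X}_h\geq 0\}$ must be decomposed as $\{\un{X}_{h/2}\geq0\}\cap\{\un{X}_{h/2}\circ\theta_{h/2}\geq 0\}$. If pointwise equality proves troublesome, the fallback is to work with $d_n$ defined via the iterates $\cV^{\nu,(k)}v_0$ of Lemma \ref{Lem309}, whose derivatives are $\Phi_{h}^k(v^\prime_{0+})$ exactly. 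In that case the comparison $\Phi^k_{h_n}\leq\Phi^{2k}_{h_{n+1}}$ applied to $v^\prime_{0+}$ combined with the contraction limit gives the same conclusion.
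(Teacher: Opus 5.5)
Your argument is correct in substance, but the composition identity in Step 2 has the order reversed. The truncation $\lor 1$ in $\Phi_h$ acts at the terminal time $h$. Conditioning on $\cF_{h/2}$ on $\{\un{X}_{h/2}\geq 0\}$ therefore produces $\Phi_{h/2}d(X_{h/2},Y_{h/2})$, truncation included, inside an outer half-step without truncation. The correct identity is $\Phi_h d=\Phi^0_{h/2}(\Phi_{h/2}d)$.

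Your version $\Phi_{h/2}(\Phi^0_{h/2}d)$ truncates a conditional expectation at time $h/2$, and it is false in general. For a pure upward drift $X_t=x+t$ with $\bfq\equiv c$ and $x\geq 0$, it equals $e^{-ch/2}\{(e^{-ch/2}d(x+h))\lor 1\}$ rather than $e^{-ch}(d(x+h)\lor 1)$.

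The repair costs nothing. Applying $\Phi^0_{h/2}g\leq \Phi_{h/2}g$ with $g=\Phi_{h/2}d$ still gives $\Phi_h d\leq \Phi^2_{h/2}d$. The rest then goes through: Step 1 is right, since $\tilde f^\prime_+=f^\prime_+\lor 1$, \eqref{Lem306_004} holds at every $x$, and $\cV^{\nu}V^{\nu}=V^{\nu}$ by \eqref{Lem310_003} and Lemma \ref{Lem303}. Step 3 is right as well: $\Phi_{h/2}$ maps $[0,\beta]$-valued functions into themselves, and its contraction modulus is $e^{-qh/2}$.

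With this correction, your proof differs from the paper's in how the limit is taken; the core comparison is the same.

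The paper works with iterates rather than fixed points. It introduces the no-dividend half-step operator $\cW^{\frac{1}{2^{n+1}}}$ and uses $V^{\frac{1}{2^n}}_{(l+1)}=\cW^{\frac{1}{2^{n+1}}}\cW^{\frac{1}{2^{n+1}}}\tilde{V}^{\frac{1}{2^n}}_{(l)}$, which is your corrected factorization $\Phi^0_{h/2}\circ\Phi_{h/2}$ at the derivative level. It also uses the comparison \eqref{compare_constant}, which is your $\Phi^0\leq\Phi$ plus monotonicity. With these it proves \eqref{barrier_convergence} by induction: the derivative of the $k$-th iterate for mesh $2^{-n}$ is dominated by that of the $2k$-th iterate for mesh $2^{-n-1}$. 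This is essentially your fallback. It then reaches $V^{\frac{1}{2^n}}$ through Lemma \ref{Lem309} and the convex-analysis fact that pointwise convergence of concave functions gives Lebesgue-a.e.\ convergence of right derivatives, and finishes by right-continuity.

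You instead get $d_n=\Phi_{h_n}d_n$ exactly from the DP equation for $V^{\nu}$, and replace the a.e.\ derivative-convergence theorem by Banach's fixed-point theorem for $\Phi_{h_{n+1}}$. This yields the inequality at every $(x,y)$ directly, with geometric convergence and no external convex-analysis input. The price is reliance on the fixed-point equation for $V^{\nu}$ and on uniqueness of the fixed point, which the paper's iterate-based argument does not need.
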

The above theorem and Proposition \ref{Prop401} suggest that, in the setting of Section \ref{CCcase}, an optimal strategy may be to pay dividends whenever the capital level exceeds the barrier determined by the limit of $\un{\bfb}_{V^{\frac{1}{2^n}}}$, while making capital injections only when the capital level falls below $0$. 
By an argument similar to that in the proof of Theorem \ref{barrier_approximation_1}, we can also conclude that the sequence $\{\bar{\bfb}_{V^{\frac{1}{2^n}}}(y): n\in\bN\}$ is non-decreasing. 

\begin{proof}[Proof of Theorem \ref{barrier_approximation_1}]
By the definition of $\un{\bfb}_{V^{\frac{1}{2^n}}}$, it suffices to prove that, for $n\in\bN$,  
\begin{align}
V^{\frac{1}{2^n}\prime}_{+}(x, y)
\leq V^{\frac{1}{2^{n+1}}\prime}_{+}(x, y),\qquad (x,y)\in \bR\times E, \label{barrier_result}
\end{align}
where $V^{\frac{1}{2^n}\prime}_+$ is the right derivative of $V^{\frac{1}{2^n}}$. 
\par
When $\nu=\delta_{\frac{1}{2^n}}$, 
we use the notation obtained by replacing $\nu$ with $\frac{1}{2^n}$ in the symbols introduced in Section \ref{Optimality_pcb}.
In addition, we write $V^{\frac{1}{2^n}}_{ (k)} $ for $\cV^{\frac{1}{2^n}, (k)} {v_0}$ for $n\in\bN$ and $k\in\bN\cup\{0\}$. 
Since $x\mapsto V^{\frac{1}{2^n}}(x, y)$ and $x\mapsto V^{\frac{1}{2^n}}_{(k)}(x, y)$ are concave for $n, k\in\bN$ and $y \in E$, Lemma \ref{Lem309}, \cite[Theorem B.4.2.3]{HirLem2001} and \cite[Theorem 1.1]{Lac1982} 
imply $\lim_{k\to\infty}{V^{\frac{1}{2^n}\prime}_{(k)+}}(x, y)={V^{\frac{1}{2^n}\prime}_+}(x, y)$ for Lebesgue-a.e. $x$ for any $n\in\bN$ and $y \in E$. 
Therefore, if we show that for $n \in \bN$, 
\begin{align}
V^{\frac{1}{2^n}\prime}_{(k)+}(x, y)
\leq V^{\frac{1}{2^{n+1}}\prime}_{(2k)+}(x, y),\qquad k\in\bN\cup\{0 \},~(x,y)\in \bR\times E, \label{barrier_convergence}
\end{align}
then \eqref{barrier_result} follows.
In the following, we prove \eqref{barrier_convergence} by induction on $k$. 
\par 
For $k=0$, since both ${V^{\frac{1}{2^n}}_{(0)}}$ and ${V^{\frac{1}{2^{n+1}}}_{(0)}}$ equal $v_0$, \eqref{barrier_convergence} holds. 
Assuming that \eqref{barrier_convergence} holds for $k=l\in\bN\cup\{0\}$, we show that \eqref{barrier_convergence} also holds for $k=l+1$. 
For $V^{\frac{1}{2^{n}}}_{(l)}$ and $V^{\frac{1}{2^{n+1}}}_{(2l)}$, we define $\tilde{V}^{\frac{1}{2^{n}}}_{(l)}$ and $\tilde{V}^{\frac{1}{2^{n+1}}}_{(2l)}$ in the same way as \eqref{tilde_f}. 
Then, it follows that
\begin{align}
\tilde{V}^{\frac{1}{2^n}\prime}_{(l)+}(x, y)
\leq \tilde{V}^{\frac{1}{2^{n+1}}\prime}_{(2l)+}(x, y),\qquad (x,y)\in \bR\times E, 
\label{ass_inequality}
\end{align}
\par
For $f\in\Gamma^{\frac{1}{2^{n+1}}}$ and $(x,y)\in\bR\times E$, 
we define an operator $\cW^{\frac{1}{2^{n+1}}}$ on $\Gamma^{\frac{1}{2^{n+1}}}$ by 
\begin{align}
\cW^{\frac{1}{2^{n+1}}} f (x, y)= 
\bfE_{(x, y)}\sbra{  -\beta\int_{[0,\frac{1}{2^{n+1}}]}e^{-\frq(t)} \diff \rbra{(-\un{X}_t)\lor0}+ e^{-\frq(\frac{1}{2^{n+1}})} f(X^0_{\frac{1}{2^{n+1}}}, Y_{\frac{1}{2^{n+1}}})}. 
\end{align}
Note that the operator $\cW^{\frac{1}{2^{n+1}}}$ coincides with the operator $\cV^{\frac{1}{2^{n+1}}}_\pi$ when $\pi$ pays no dividends. 
Focusing on condition (iii) in the definition of the class $\Gamma^\nu$, a slight extension of \eqref{upperbound_by_0strategy} shows that $\Gamma^{\frac{1}{2^n}}\subset \Gamma^{\frac{1}{2^{n+1}}}$ holds, and that $\cW^{\frac{1}{2^{n+1}}}$ can also be applied to the functions in $\Gamma^{\frac{1}{2^n}}$. 
By applying almost the same method as in the proof of \eqref{Lem306_004}, we obtain the following expression for the right derivative of $x \mapsto \cW^{\frac{1}{2^{n+1}}}f(x, y)$, denoted by ${(\cW^{\frac{1}{2^{n+1}}}f)}^\prime_+$, for $(x,y)\in\bR\times E$,  
\begin{align}
{(\cW^{\frac{1}{2^{n+1}}}f)}^\prime_+&(x ,y )
=
\bfE_{(x, y)}\sbra{ e^{-\frq(\frac{1}{2^{n+1}}\land\tau^{-}_0)} 
\rbra{ f^\prime_+(X_{\frac{1}{2^{n+1}}}, Y_{\frac{1}{2^{n+1}}})1_{\{\un{X}_{\frac{1}{2^{n+1}}}\geq 0\}} 
+\beta1_{\{\un{X}_{\frac{1}{2^{n+1}}}<0\}}}}. \label{1/2_operator_derivative}
\end{align}
By applying the same method as in the proof of Lemma \ref{Lem306}, we can verify that $\cW^{\frac{1}{2^{n+1}}}f\in \Gamma^{\frac{1}{2^{n+1}}}$ for $f \in \Gamma^{\frac{1}{2^{n+1}}}$. 
By \eqref{Lem306_004} and \eqref{1/2_operator_derivative}, we have, for $f, g\in \Gamma^{\frac{1}{2^{n+1}}}$ with $f^\prime_+(x, y)\leq g^\prime_+(x, y)$ for $(x,y)\in\bR\times E$,
\begin{align}
{(\cW^{\frac{1}{2^{n+1}}}f)}^\prime_+(x ,y )\leq {(\cV^{\frac{1}{2^{n+1}}}g)}^\prime_+(x ,y ),
\qquad (x,y)\in\bR\times E. 
\label{compare_constant}
\end{align} 
\par
By \eqref{ass_inequality}, \eqref{1/2_operator_derivative} and the definition of $V^{\frac{1}{2^{n+1}}}_{(2l+1)}$, we have 
\begin{align}
{(\cW^{\frac{1}{2^{n+1}}}\tilde{V}^{\frac{1}{2^n}}_{(l)})}^\prime_+(x ,y )\leq
{(\cW^{\frac{1}{2^{n+1}}}\tilde{V}^{\frac{1}{2^{n+1}}}_{(2l)})}^\prime_+(x ,y )
=V^{\frac{1}{2^{n+1}}\prime}_{(2l+1)+}(x, y),
\quad (x, y)\in\bR\times E. \label{nagaifutoushiki}
\end{align}
By the Markov property at $\frac{1}{2^{n+1}}$, we have, for $(x,y)\in\bR\times E$, 
\begin{align}
V^{\frac{1}{2^n}}_{(l+1)}(x, y)=\cW^{\frac{1}{2^{n+1}}}\cW^{\frac{1}{2^{n+1}}}\tilde{V}^{\frac{1}{2^n}}_{(l)}(x, y). \label{3bunkai}
\end{align}
Using \eqref{3bunkai}, the fact that $V^{\frac{1}{2^{n+1}}}_{(2l+2)}$ is equal to $\cV^{\frac{1}{2^{n+1}}}V^{\frac{1}{2^{n+1}}}_{(2l+1)}$, \eqref{nagaifutoushiki} and \eqref{compare_constant}, we have  \eqref{barrier_convergence} with $k=l+1$. 
The proof is complete. 
\end{proof}
By \eqref{barrier_result} and since $\lim_{n\to\infty} V^{\frac{1}{2^n}\prime}_+(x, y)=V^\prime_+(x,y)$ for Lebesgue-a.e. $x$ for any $y \in E$ by Proposition \ref{Prop401}, \cite[Theorem B.4.2.3]{HirLem2001} and \cite[Theorem 1.1]{Lac1982}, we have, for $y \in E$,  
\begin{align}
\un{\bfb}_V(y):=\sup \{x \geq 0: V^\prime_+(x, y)>1\}=\lim_{n\to\infty}\un{\bfb}_{V^{\frac{1}{2^n}}}( y). \label{Sec401last}
\end{align}
From this and the discussion preceding the proof of Theorem \ref{barrier_approximation_1}, it is anticipated that the classical--classical setting should be analogous to the periodic--classical setting. 

\begin{Rem}
In the case of L\'evy processes, the approximation described in this section should allow us to recover the main result of \cite{Nob2021}. Indeed, one may consider the convergence, as $n\to\infty$, of the resulting controlled process under the periodic--classical barrier strategy with barrier $\un{\bfb}_{V^{\frac{1}{2^n}}}$ to the resulting controlled process under the double barrier strategy with barrier $\un{\bfb}_{V}$, and then combine this convergence with standard calculations. 
The same reasoning applies to the finite-state MAP setting. In particular, the result obtained in \cite{MatNobPerYam2024} should also be recoverable from this approximation. 
For a general MAP, however, the dependence of $\un{\bfb}_{V^{\frac{1}{2^n}}}(y)$ on $n\in\bN$ and $y\in E$ may be complicated, making it difficult to discuss convergence of the resulting controlled processes. 
Therefore, a precise description of the optimal strategy requires either establishing rigorous convergence estimates or solving an auxiliary impulse control problem first and then approximating the original problem. 
This is left for future work.
\end{Rem}

\subsection{Approximation by Poissonian-interval periodic dividends}
\label{Approximation_from_P}
For each $n\in\bN$, we write $\Pi^{\rmP, n}$ for the set of admissible strategies in the periodic--classical setting, where $\nu$ is the exponential distribution with intensity $n$. 
Let $\{N^{(k)}: k\in\bN\}$ be the family of independent Poisson processes $N^{(k)}=\{N^{(k)}_t:t\geq 0\}$ with intensity $1$, and assume that this family is independent of $\{X_t:t\geq 0\}$ and $\{Y_t:t\geq 0\}$.
Then, the times at which dividends under $\Pi^{\rmP, n}$ are paid can be regarded as the jump times of the Poisson process $\hat{N}^{(n)}=\{\hat{N}^{(n)}_t:t\geq 0\}$, where $\hat{N}^{(n)}_t=\sum_{k=1}^nN^{(k)}_t$ for $t\geq 0$. 
We adopt this representation throughout this section. 
We write $V^{\rmP, n}(x, y)=\sup_{\pi\in\Pi^{\rmP, n}} v_\pi(x, y)$ for $n\in\bN$ and $(x, y)\in\bR\times E$. 
Fix $m,n\in\bN$ with $m<n$. We may assume that $\cF$ contains the randomness generated by $\{N^{(k)}: k\in\bN\}$, and that both $\hat{N}^{(m)}_t$ and $\hat{N}^{(n)}_t$ are $\cF_t$-measurable for $t\geq 0$.
Since the jump times of $\hat{N}^{(m)}$ are included in those of $\hat{N}^{(n)}$ and 
by the definition of $V$, we have, for $(x, y)\in\bR\times E$,
\begin{align}
V^{\rmP, m}(x, y)\leq V^{\rmP,n}(x, y)\leq V(x, y)  . \label{P_value_function_ineq}
\end{align}
\par
For $\pi \in \Pi$ and $(x, y)\in\bR\times E$, we may choose the sequence of strategies $\{\pi_n : n\in\bN\}$ such that $\pi_n\in\Pi^{\rmP, n}$ for $n\in\bN$ and 
$\lim_{n\to\infty} v_{\pi_n}(x, y) =v_\pi (x, y)$. 
In fact, we define $\pi_n$ for $n\in\bN$ to satisfy
\begin{align}
L^{\pi_n}_t := L^\pi_{[t]_{\rmP , n}-} ,\qquad R^{\pi_n}_t:=R^\pi_t,\qquad t\geq 0. 
\end{align}
Here $[t]_{\rmP, n}=\max \{s\in[0, t]: \Delta \hat{N}^{(n)}_s \neq 0\}$ with $\max\varnothing=-\infty$
and $L^\pi_{-\infty}=0$. 
Then the sequence $\{\pi_n : n\in\bN\}$ satisfies the above conditions. 
This fact and \eqref{P_value_function_ineq} imply the following proposition. 
\begin{Prop}\label{Prop403}
For $(x, y)\in\bR\times E$, we have 
$V^{\rmP, n}(x, y) \uparrow V(x, y)$ 
as $n\uparrow\infty$. 
Thus, $x \mapsto V(x, y)$ is also concave for $y\in E$. 
\end{Prop}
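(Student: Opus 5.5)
The proof has three parts: the monotonicity in (P\_value\_function\_ineq), an approximation argument showing the sequence $V^{\rmP,n}$ reaches $V$, and the passage of concavity to the limit. This parallels Proposition \ref{Prop401}.

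\emph{Monotonicity.} Inequality (P\_value\_function\_ineq) shows that $n\mapsto V^{\rmP,n}(x,y)$ is non-decreasing and bounded above by $V(x,y)$. Hence the limit $\lim_n V^{\rmP,n}(x,y)=:V^\ast(x,y)$ exists and satisfies $V^\ast\le V$.

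\emph{Reverse inequality.} Fix $\pi\in\Pi$ and $(x,y)$, and use the approximating strategies $\pi_n\in\Pi^{\rmP,n}$ with $L^{\pi_n}_t=L^\pi_{[t]_{\rmP,n}-}$ and $R^{\pi_n}=R^\pi$. These are genuinely admissible strategies:
\begin{itemize}
\item $L^{\pi_n}\le L^\pi$, so $U^{\pi_n}\ge U^\pi\ge 0$;
\item $L^{\pi_n}$ only jumps at jump times of $\hat N^{(n)}$, so condition (iv) holds;
\item $R^{\pi_n}=R^\pi$ keeps \eqref{adcon};
\item since $[t]_{\rmP,n}\le t$, the process $L^{\pi_n}_t$ is $\cF_t$-measurable.
\end{itemize}

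It then suffices to show $v^L_{\pi_n}(x,y)\to v^L_\pi(x,y)$, because the capital-injection terms coincide. Writing $v^L$ via $\int_0^\infty \bfq(Y_s)e^{-\frq(s)}L_s\,\diff s$ (plus the value at $0$, which is handled the same way), we have $L^{\pi_n}_s\uparrow L^\pi_{s-}$ pointwise. This holds because $[s]_{\rmP,n}\uparrow s$: the jump times of $\hat N^{(n)}$ become dense as $n\to\infty$ and are nested, and $L^\pi_{s-}=L^\pi_s$ for Lebesgue-a.e.\ $s$. Monotone convergence then gives the limit, provided $v^L_\pi(x,y)<\infty$. When $v^L_\pi(x,y)=\infty$, the same monotone convergence makes $v_{\pi_n}(x,y)\to\infty=v_\pi(x,y)$. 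This yields $V^\ast(x,y)\ge v_\pi(x,y)$, and taking the supremum over $\pi$ gives $V^\ast=V$.

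\emph{Concavity.} By Theorem \ref{newThm302} and Lemma \ref{Lem309} applied with $\nu$ exponential of intensity $n$, $V^{\rmP,n}\in\Gamma^\nu$, so each $x\mapsto V^{\rmP,n}(x,y)$ is concave. A pointwise limit of concave functions is concave, and the limit is finite, since it is bounded above by $v^L_0$ via \eqref{upperbound_by_0strategy} and \eqref{finiteness}. Hence $x\mapsto V(x,y)$ is concave.

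The main technical point is the pointwise convergence $[s]_{\rmP,n}\uparrow s$ almost surely. This follows because $\hat N^{(n)}$ is a superposition of the independent $N^{(k)}$, so the jump sets are nested and their union over $n$ is a.s.\ dense in $[0,\infty)$. Note that, unlike the dyadic case, the jump times may coincide with jumps of $L^\pi$ only on a null set, which is irrelevant for the Lebesgue integral representation.
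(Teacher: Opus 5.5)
Your proposal is correct and follows the paper's route: monotonicity from the nested clocks in \eqref{P_value_function_ineq}, the same approximating strategies $L^{\pi_n}_t=L^\pi_{[t]_{\rmP,n}-}$, $R^{\pi_n}=R^\pi$, and concavity of $V$ as the increasing pointwise limit of the concave $V^{\rmP,n}$ (Lemma \ref{Lem309}); you also supply the density and monotone-convergence details that the paper leaves implicit. Two side remarks are wrong but not used. First, the bound via \eqref{upperbound_by_0strategy} and \eqref{finiteness} holds for a fixed $\nu$, and $v^L_0$ (periodic reflection at $0$ with exponential clocks of intensity $n$) is not bounded uniformly in $n$; it blows up, e.g.\ when $X$ has a Brownian component. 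So if you want finiteness of $V$, derive it from $L^\pi_t\le X_t+R^\pi_t$, $\beta>1$ and \eqref{finite_ass}; concavity of the limit does not need it. Second, a strategy $\pi\in\Pi$ may use the Poisson clocks and so jump exactly at their jump times with positive probability, not only on a null set. This is harmless because your argument only uses $L^\pi_{s-}=L^\pi_s$ for Lebesgue-a.e.\ $s$.
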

Similarly to Theorem \ref{barrier_approximation_1}, we obtain the following theorem for the lower barriers $\{\un{\bfb}_{V^{\rmP, n}}:n\in\bN\}$ of the set of barriers corresponding to optimal MMPCB strategies.
\begin{Thm}\label{barrier_approximation_P}
For $ y\in E$, the sequence $\cbra{\un{\bfb}_{V^{\rmP, n}}( y):n\in\bN}$ is non-decreasing. 
\end{Thm}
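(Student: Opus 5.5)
We mirror the proof of Theorem \ref{barrier_approximation_1}. By the definition of $\un{\bfb}_{V^{\rmP,n}}$, it suffices to show that for $n\in\bN$,
\begin{align}
V^{\rmP,n\prime}_{+}(x,y)\leq V^{\rmP,n+1\prime}_{+}(x,y),\qquad (x,y)\in\bR\times E .
\end{align}
Write $V^{\rmP,n}_{(k)}:=\cV^{\rmP,n,(k)}v_0$. By Lemma \ref{Lem309}, concavity, \cite[Theorem B.4.2.3]{HirLem2001} and \cite[Theorem 1.1]{Lac1982}, right derivatives of the iterates converge to $V^{\rmP,n\prime}_+$ for Lebesgue-a.e.\ $x$. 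Right continuity of right derivatives of concave functions then extends the inequality to every $x$. So it suffices to compare iterates, or, since a single iterate is not aligned across different intensities, to compare suitable approximants with a common time structure.

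The natural coupling uses the representation in Section \ref{Approximation_from_P}. The dividend times for intensity $n$ are the jump times of $\hat N^{(n)}$. Those for intensity $n+1$ are the jump times of $\hat N^{(n+1)}=\hat N^{(n)}+N^{(n+1)}$. Each jump of $\hat N^{(n+1)}$ is, independently, a jump of $\hat N^{(n)}$ with probability $n/(n+1)$ and a jump of $N^{(n+1)}$ otherwise. Using this, I will write the intensity-$n$ operator in terms of intensity-$(n+1)$ steps. Let $\cW^{n+1}$ denote the intensity-$(n+1)$ one-step operator with no dividend paid, that is, $\cV^{\rmP,n+1}_\pi$ for $\pi$ paying nothing. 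By the strong Markov property \eqref{periodicMarkov} at the first jump of $\hat N^{(n+1)}$ and memorylessness, one step of $\cV^{\rmP,n}$ applied to $f$ satisfies
\begin{align}
\cV^{\rmP,n}f=\cW^{n+1}\Big(\tfrac{n}{n+1}\tilde f+\tfrac{1}{n+1}\cV^{\rmP,n}f\Big).
\end{align}
Here $\tilde f$ is defined as in \eqref{tilde_f}, using \eqref{optimal_b}. Thus $\cV^{\rmP,n}f$ is the fixed point of the map $g\mapsto \cW^{n+1}\big(\frac{n}{n+1}\tilde f+\frac{1}{n+1}g\big)$, which is a contraction because of the discount $e^{-q T}$.

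Iterating this contraction $j$ times from $g=v_0$ gives approximants $g_j\to\cV^{\rmP,n}f$. I will then run a double induction, over the outer DP index $k$ and the inner index $j$. The claim is that the right derivative of the $j$-th inner approximant of $V^{\rmP,n}_{(k+1)}$ is bounded above by $V^{\rmP,n+1\prime}_{(m)+}$ for a suitable $m=m(k,j)$ chosen to increase in both indices.

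The key derivative tools are two. The first is the formula \eqref{1/2_operator_derivative} adapted to $\cW^{n+1}$. The second is the comparison \eqref{compare_constant}: if $f'_+\le g'_+$ then $(\cW^{n+1}f)'_+\le(\cV^{\rmP,n+1}g)'_+$. Two further facts are needed. First, $\tilde f'_+=f'_+\wedge 1$ for $x\ge0$, so derivative ordering is preserved under $f\mapsto\tilde f$. Second, if $f'_+\le g'_+$, then $(\frac{n}{n+1}\tilde f+\frac{1}{n+1}h)'_+\le \tilde g'_+$ whenever also $h'_+\le\tilde g'_+$. The derivative of a convex combination is the convex combination of the derivatives, and $\tilde g'_+\ge \tilde f'_+$.

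The main obstacle is that the index bookkeeping is no longer deterministic, as it was in the dyadic case where exactly two half-steps made one full step. Two issues arise. First, the intermediate functions must be controlled so that both sides remain in $\Gamma^{\rmP,n+1}$. Second, the limits in $j$ and $k$ must be interchanged with right derivatives, and only a.e.\ convergence of derivatives is available. I would handle the second issue by passing to the limit at the level of values and then invoking \cite[Theorem 1.1]{Lac1982} once at the end.

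A cleaner alternative, which I would try first, avoids the iterates altogether. Work directly with $V^{\rmP,n}$ and $V^{\rmP,n+1}$ as fixed points. Using the identity above, $V^{\rmP,n}$ is a fixed point of $g\mapsto\cW^{n+1}(\frac{n}{n+1}\tilde V^{\rmP,n}+\frac{1}{n+1}g)$, while $V^{\rmP,n+1}=\cW^{n+1}\tilde V^{\rmP,n+1}$. Then argue that the set of $g$ with $g'_+\le V^{\rmP,n+1\prime}_+$ is invariant under the relevant maps, starting from $v_0$. This reduces to the same derivative comparisons.
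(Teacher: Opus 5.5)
Your thinning identity $\cV^{\rmP,n}f=\cW^{n+1}\bigl(\tfrac{n}{n+1}\tilde f+\tfrac{1}{n+1}\cV^{\rmP,n}f\bigr)$ is correct, and it is the same decomposition the paper uses. However, neither of your two routes closes.

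\textbf{The double induction.} The next outer step needs a bound on $V^{\rmP,n\prime}_{(k+1)+}$ by some $V^{\rmP,n+1\prime}_{(m)+}$ with $m$ finite. But $V^{\rmP,n}_{(k+1)}$ is only reached as $j\to\infty$, and your $m(k,j)$, which increases in $j$, diverges there. In addition, you must bound $\tfrac{n}{n+1}\tilde V^{\rmP,n}_{(k)}+\tfrac{1}{n+1}g_{j-1}$ by a single $\tilde V^{\rmP,n+1}_{(m)}$ while the two hypotheses come with different indices. That requires $V^{\rmP,n+1\prime}_{(m)+}$ to be non-decreasing in $m$, which you never establish.

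\textbf{The ``cleaner alternative''.} As written it is circular. Invariance of $\{g:g'_+\le V^{\rmP,n+1\prime}_+\}$ under $g\mapsto\cW^{n+1}\bigl(\tfrac{n}{n+1}\tilde V^{\rmP,n}+\tfrac{1}{n+1}g\bigr)$ needs $\tilde V^{\rmP,n\prime}_+\le\tilde V^{\rmP,n+1\prime}_+$. That is essentially the inequality you are trying to prove. Starting from $v_0$ also needs $v'_{0+}\le V^{\rmP,n+1\prime}_+$, which you do not justify.

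\textbf{The derivative of $\tilde f$.} On $[0,\infty)$ one has $\tilde f'_+=f'_+\vee 1$, not $f'_+\wedge 1$, since $\tilde f\ge f$ and $\tilde f'_+\in[1,\beta]$ there. It is exactly $g'_+\le\tilde g'_+$ that handles the no-dividend branch of the comparison; with $\wedge$ that step would fail.

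The missing idea is to make the thinned operator self-referential. The paper sets $\cU^{\rmP,n}g:=\cW^{n+1}\bigl(\tfrac{n}{n+1}\tilde g+\tfrac{1}{n+1}g\bigr)$, which is your map with the fixed $f$ replaced by the current iterate. It then proves in Lemma \ref{LemC01} that $U^{\rmP,n}_{(k)}=(\cU^{\rmP,n})^k v_0\to V^{\rmP,n}$.
\begin{itemize}
\item The upper bound follows from $\cU^{\rmP,n}V^{\rmP,n}=\cV^{\rmP,n}V^{\rmP,n}=V^{\rmP,n}$ (your identity with $f=V^{\rmP,n}$) together with monotonicity of $\cU^{\rmP,n}$.
\item The lower bound comes from reading $U^{\rmP,n}_{(k)}$ as the value over strategies that pay only at jumps of $\hat N^{(n)}$ among the first $k$ jumps of $\hat N^{(n+1)}$, and comparing with the truncated optimal MMPCB strategy.
\end{itemize}
With indices matched one-to-one, $f'_+\le g'_+$ gives both $f'_+\le\tilde g'_+$ and $\tilde f'_+\le\tilde g'_+$. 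Hence $(\cU^{\rmP,n}f)'_+\le(\cV^{\rmP,n+1}g)'_+$, and the induction $U^{\rmP,n\prime}_{(k)+}\le V^{\rmP,n+1\prime}_{(k)+}$ closes from the trivial base case $v_0$ versus $v_0$.

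Your nested scheme can also be rescued by comparing with $V^{\rmP,n+1}$ itself throughout. That would require three things you have not supplied:
\begin{itemize}
\item a proof of $v'_{0+}\le V^{\rmP,n+1\prime}_+$, for example from $v_0=\cW^{n+1}v_0$ and $(\tilde v_0)'_+\ge v'_{0+}$, which make $(\cV^{\rmP,n+1,(m)}v_0)'_+$ non-decreasing in $m$;
\item an outer induction on the iterates $V^{\rmP,n}_{(k)}$;
\item pointwise convergence of the inner iteration despite the linear growth of the functions involved.
\end{itemize}
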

Since the proof of this theorem is somewhat more involved than that of Theorem \ref{barrier_approximation_1} but follows similar lines, we give it in Appendix \ref{Sec00C} of the Supplementary Material. 
Combining this theorem with Proposition \ref{Prop403} suggests that, in the setting of Section \ref{CCcase}, the optimal strategy may be inferred from the limit of $\un{\bfb}_{V^{\rmP,n}}$, as in Theorem \ref{barrier_approximation_1}. 
A similar result holds for $\cbra{\bar{\bfb}_{V^{\rmP, n}}( y):n\in\bN}$.
As in \eqref{Sec401last}, we have, for $y \in E$,  
$\un{\bfb}_V(y):=\sup \{x \geq 0: V^\prime_+(x, y)>1\}=\lim_{n\to\infty}\un{\bfb}_{V^{\rmP, n}}( y)$. 

\section*{Acknowledgments}
The author was supported by JSPS KAKENHI grant no. JP21K13807 and JSPS Open Partnership Joint Research Projects grant no. JPJSBP120209921. 
In addition, from February 2023 to February 2025, the author stayed at Centro de Investigaci\'on en Matem\'aticas (CIMAT), Mexico, as a JSPS Overseas Research Fellow. The author is grateful for the stimulating research environment at CIMAT, which helped develop the main ideas of this paper.

\appendix

\section{The proofs of several lemmas in Section \ref{Optimality_pcb}} \label{lemma_proof}
\begin{proof}[Proof of Lemma \ref{Lem304}]
We write the resulting controlled process of the strategy $(L^0, R^0)$ as $U^0:=\{U^0_t : t\geq 0\}$. 
First, we prove in (1) that for $\pi\in\un{\Pi}^\nu$, 
\begin{align}
L^\pi_t \leq L^0_t,\qquad R^\pi_t \leq R^0_t,\qquad t\geq 0,
\label{0strategybound}
\end{align}
for all $(x,y)\in \bR \times E$, $\bfP_{(x, y)}$-a.s. 
Then, we obtain \eqref{upperbound_by_0strategy} since we have
\begin{align}
&\int_{[0,\infty)}e^{-\frq(t)} \diff L^0_t-\int_{[0,\infty)}e^{-\frq(t)} \diff L^\pi_t
=\int_0^\infty \bfq(Y_s) e^{-\frq(s)} ( L^0_s-L^\pi_s)\diff s \geq 0, \label{daishou} \\
&\int_{[0,\infty)}e^{-\frq(t)} \diff R^0_t-\int_{[0,\infty)}e^{-\frq(t)} \diff R^\pi_t
=\int_0^\infty \bfq(Y_s) e^{-\frq(s)} ( R^0_s-R^\pi_s)\diff s \geq 0, \label{daishou_002}
\end{align}
for all $(x,y)\in\bR\times E$, $\bfP_{(x, y)}$-a.s.
Second, we prove \eqref{finiteness} in (2). 
\par
(1) We prove \eqref{0strategybound} and 
\begin{align}
U^\pi_t \geq U^0_t, \qquad t\geq 0 \label{Ubound}
\end{align}
for all $(x,y)\in\bR\times E$, $\bfP_{(x, y)}$-a.s., 
for a fixed $\pi\in\un{\Pi}^\nu$, by induction. 
For $k \in \bN\cup\{0\}$, we assume that \eqref{0strategybound} and \eqref{Ubound} hold on $[0, T^\nu_k)$, and prove that \eqref{0strategybound} and \eqref{Ubound} hold on $[T^\nu_k, T^\nu_{k+1})$. 
By the assumption and \eqref{300_01}, we have 
\begin{align}
R^\pi_{T^\nu_k}=R^\pi_{T^\nu_k-}
-\rbra{U^\pi_{(k-)}\land 0}
\leq 
R^0_{T^\nu_k-}
-\cbra{(U^0_{T^\nu_k-}+\Delta X_{T^\nu_k} )\land 0}
=R^0_{T^\nu_k}, \label{Lem303_01}
\end{align}
for all $(x,y)\in\bR\times E$, $\bfP_{(x, y)}$-a.s., 
where $U^\pi_{(0-)}$ and $\Delta X_0$ are set equal to $X_0$, and $R^\pi_{0-}$, $R^0_{0-}$ and $U^0_{0-}$ are equal to $0$. 
By the definition of $U^0$, we have 
\begin{align}
U^\pi_{T^\nu_k}\geq 0 = U^0_{T^\nu_k},\label{Lem303_02}
\end{align}
for all $(x,y)\in\bR\times E$, $\bfP_{(x, y)}$-a.s. 
By \eqref{Lem303_01} and \eqref{Lem303_02}, we have
\begin{align}
L^\pi_{T^\nu_k}
=X_{T^\nu_k}+R^\pi_{T^\nu_k}-U^\pi_{T^\nu_k}
\leq X_{T^\nu_k}+R^0_{T^\nu_k}-U^0_{T^\nu_k}
=L^0_{T^\nu_k},
\end{align}
for all $(x,y)\in\bR\times E$, $\bfP_{(x, y)}$-a.s., 
and thus \eqref{0strategybound} and \eqref{Ubound} hold at $T^\nu_k$. 
By the definition of $L^0$, $R^0$ and $\pi$ and since \eqref{0strategybound} and \eqref{Ubound} hold at $T^\nu_k$, we have, for $t \in (T^\nu_k, T^\nu_{k+1})$,  
\begin{align}
L^\pi_t =L^\pi_{T^\nu_k}\leq L^0_{T^\nu_k} =L^0_t, 
\end{align}
\begin{align}
R^\pi_t =R^\pi_{T^\nu_k}-\cbra{\rbra{U^\pi_{T^\nu_k}+\inf_{s \in [T^\nu_k, t]}(X_s-X_{T^\nu_k})}\land0} \leq R^0_{T^\nu_k}-\cbra{\rbra{\inf_{s \in [T^\nu_k, t]}(X_s-X_{T^\nu_k})}\land0} =R^0_{t}, 
\end{align}
and 
\begin{align}
U^\pi_t=U^\pi_{T^\nu_k} +(X_t-X_{T^\nu_k})
-&\cbra{\rbra{U^\pi_{T^\nu_k}+\inf_{s \in [T^\nu_k, t]}(X_s-X_{T^\nu_k})}\land0}\\
&\geq (X_t-X_{T^\nu_k})-\cbra{\rbra{\inf_{s \in [T^\nu_k, t]}(X_s-X_{T^\nu_k})}\land0} =U^0_t,
\end{align}
for all $(x,y)\in\bR\times E$, $\bfP_{(x, y)}$-a.s. 
Thus, \eqref{0strategybound} and \eqref{Ubound} also hold for $t \in (T^\nu_k, T^\nu_{k+1})$. 
\par
(2) Based on the preceding argument, it suffices to establish the existence of $B^\nu$ that satisfies \eqref{finiteness} for $(x, y)\in\bR\times E$ with $L^\pi_t$ and $R^\pi_t$ replaced by $L^0_t$ and $R^0_t$, respectively. 
By the definition of $L^0$, \eqref{periodicMarkov} and \eqref{200_01}, we have, for $(x, y)\in\bR \times E$,
\begin{align}
&\bfE_{(x, y)}\sbra{\int_{[0,\infty)}e^{-\frq(t)} \diff L^0_t}
\\
&=x\lor0+ \sum_{k\in\bN}\bfE_{(x, y)}\sbra{e^{-\frq(T^\nu_k)} 
\cbra{\rbra{X_{T^\nu_k}-X_{T^\nu_{k-1}}}-\inf_{s \in [T^\nu_{k-1},T^\nu_{k}]}
\rbra{X_{s}-X_{T^\nu_{k-1}}}}
}
\\
&\leq x\lor0+ \sum_{k\in\bN}\bfE_{(x, y)}\sbra{e^{-\frq(T^\nu_{k-1})} 
\bfE_{(X_{T^\nu_{k-1}}, Y_{T^\nu_{k-1}})}\sbra{e^{-q T^\nu}\rbra{\rbra{X_{T^\nu}-X_0}-\inf_{s \in [0,T^\nu]}
\rbra{X_{s}-X_{0}}}}
}
\\
&=x\lor0+ \sum_{k\in\bN}\bfE_{(x, y)}\sbra{e^{-\frq(T^\nu_{k-1})} 
\bfE_{(0, Y_{T^\nu_{k-1}})}\sbra{e^{-q T^\nu}\rbra{X_{T^\nu}-
\un{X}_{T^\nu}
}}
}\\
&\leq x \lor0+\sum_{k\in\bN} \rbra{\int_0^\infty e^{-qt} \nu (\diff t)}^{k-1}
M_L, \label{bound_L}
\end{align}
where $\un{X}_t:= \inf_{s \in[0, t]} X_s$ for $t\geq 0$ and  
$M_L:= \sup_{y \in E} \bfE_{(0, y)}\sbra{e^{-q T^\nu}\rbra{X_{T^\nu}-
\un{X}_{T^\nu}
}}$. 
By a similar computation, we have, for $(x, y)\in\bR \times E$, 
\begin{align}
\bfE_{(x, y)}\sbra{\int_{[0,\infty)}e^{-\frq(t)} \diff R^0_t}
&\leq -(x\land 0)+\sum_{k\in\bN}\bfE_{(x, y)}\sbra{e^{-\frq(T^\nu_{k-1})} 
\bfE_{(0, Y_{T^\nu_{k-1}})}\sbra{\int_{(0, T^\nu]} 
e^{-qt} \diff \rbra{-\un{X}_t
}}
}
\\
&\leq -(x\land 0)+\sum_{k\in\bN} \rbra{\int_0^\infty e^{-qt} \nu (\diff t)}^{k-1}
M_R, \label{bound_R}
\end{align}
where $M_R:= \sup_{y \in E} \bfE_{(0, y)}\sbra{ \int_{(0, T^\nu]} 
e^{-qt} \diff \rbra{-\un{X}_{t}
}
}$. 
Based on the above discussion, it suffices to prove $M_L<\infty$ and $M_R<\infty$ and define $B^\nu:=(M_L\lor M_R)\sum_{k\in\bN} \rbra{\int_0^\infty e^{-qt} \nu (\diff t)}^{k-1}$. 
By the Markov property and \eqref{finite_ass}, we have, for $y \in E$, 
\begin{align}
\bfE_{(0, y)}\sbra{e^{-q T^\nu}\rbra{X_{T^\nu}-\un{X}_{T^\nu}
}} &\leq 2
\bfE_{(0, y)}\sbra{e^{-q T^\nu}
\sup_{s \in [0 , T^\nu]}|X_s|}
\\
&= 2\sum_{k\in\bN} 
\bfE_{(0, y)}\sbra{e^{-q T^\nu}
\sup_{s \in [0 , T^\nu]}|X_s|; T^\nu\in (k-1, k] } \\
&\leq 2\sum_{k\in\bN} e^{-q(k-1)} 
\bfE_{(0, y)}\sbra{
\sup_{s \in [0 , k]}|X_s|}
\nu((k-1, k])
\\
&\leq 
2\sum_{k\in\bN} e^{-q(k-1)} 
kM
\nu((k-1, k])
\leq2M\frac{e^{q-1}}{q} \nu ((0, \infty))
< \infty, \label{upper_sup}
\end{align}
which implies $M_L<\infty$. 
Similarly, by the Markov property and \eqref{finite_ass}, 
we have, for $y \in E$,
\begin{align}
\bfE_{(0, y)}\sbra{ \int_{(0, T^\nu]} e^{-qt} \diff \rbra{-\un{X}_{t}
}} 
&\leq \sum_{k\in\bN} 
\bfE_{(0, y)}\sbra{ \int_{(k-1, k]} e^{-qt} \diff \rbra{-\inf_{s \in [k-1, t]}(X_s-X_{k-1})}
} 
\\
&\leq\sum_{k\in\bN} e^{-q(k-1)} \sup_{y\in E}
\bfE_{(0, y)}\sbra{ \int_{(0, 1]} e^{-qt} \diff \rbra{-\un{X}_t
}} 
\\
&\leq M \sum_{k\in\bN} e^{-q(k-1)} 
=M\frac{1}{1-e^{-q}} 
< \infty, 
\end{align}
which implies $M_R<\infty$. 
The proof is complete. 
\end{proof} 
\begin{proof}[Proof of Lemma \ref{Lem308}]
For $n \in\bN$ and $(x, y)\in\bR\times E$, we have
\begin{align}
&\absol{v_\pi(x, y)-v_{\pi_n}(x, y)}\\
&=\absol{
\bfE_{(x, y)}\sbra{\int_{[T^\nu_{n+1}, \infty)}e^{-\frq(t)} \diff L^{\pi}_t 
  -\beta\int_{[T^\nu_{n+1}, \infty)}e^{-\frq(t)} \diff R^{\pi}_t+\beta\int_{[T^\nu_{n+1}, \infty)}e^{-\frq(t)} \diff R^{\pi_n}_t}}\\
& 
\begin{aligned}
\leq&{
\bfE_{(x, y)}\sbra{\int_{[T^\nu_{n+1}, \infty)}e^{-\frq(t)} \diff L^{\pi}_t}} 
+\beta{
\bfE_{(x, y)}\sbra{\int_{[T^\nu_{n+1}, \infty)}e^{-\frq(t)} \diff R^{\pi}_t 
}}+\beta{
\bfE_{(x, y)}\sbra{\int_{[T^\nu_{n+1}, \infty)}e^{-\frq(t)} \diff R^{\pi_n}_t}},
\end{aligned}
\label{Lem308_001}
\end{align}
where the equality follows from \eqref{def_pi_n}. 
By 
\eqref{upperbound_by_0strategy}, \eqref{bound_R} and the dominated convergence theorem, we have 
\begin{align}
\bfE_{(x, y)}\sbra{\int_{[T^\nu_{n+1}, \infty)}e^{-\frq(t)} \diff R^{\pi_n}_t}
&\leq\bfE_{(x, y)}\sbra{\int_{[T^\nu_{n+1}, \infty)}e^{-\frq(t)} \diff R^{0}_t}
\to 0 \qquad\text{ as }n\to\infty .
\label{Lem308_002}
\end{align}
By \eqref{finiteness} and the dominated convergence theorem, 
we have 
\begin{align}
\bfE_{(x, y)}\sbra{\int_{[T^\nu_{n+1}, \infty)}e^{-\frq(t)} \diff L^{\pi}_t}&
\to 0\quad\text{ as }n\to \infty, \label{Lem308_003}
\\
\bfE_{(x, y)}\sbra{\int_{[T^\nu_{n+1}, \infty)}e^{-\frq(t)} \diff R^{\pi}_t}&
\to 0\quad\text{ as }n\to \infty.  \label{Lem308_004}
\end{align}
By \eqref{Lem308_001}, \eqref{Lem308_002}, \eqref{Lem308_003} and \eqref{Lem308_004}, the proof is complete. 
\end{proof}
\begin{proof}[Proof of ($\Rightarrow$) in Lemma \ref{Lem311}]
We show that strategies in $\un{\Pi}^\nu$ not included in $\Pi^{\nu,\ast}_{V^\nu}$ are not optimal. 
Assume that a strategy $\pi\in \un{\Pi}^\nu$ does not belong to $\Pi^{\nu,\ast}_{V^\nu}$. 
We define 
\begin{align}
n_{\neq} := \min\cbra{k\in\bN: 1_{\{U^\pi_{T^\nu_k}> \bar{\bfb}_{V^\nu}(Y_{T^\nu_k})\}}+
1_{\{U^\pi_{T^\nu_k}< \un{\bfb}_{V^\nu}(Y_{T^\nu_k})\}}1_{\{\Delta L^\pi_{T^\nu_k}>0\}}>0
}. 
\end{align}
By the definition of $\pi$, $T^\nu_{n_{\neq}}<\infty$ with positive probability for some $\bfP_{(x, y)}$ with $(x, y)\in\bR\times E$. 
We fix such a $(x, y)\in\bR\times E$ and prove that 
\begin{align}
v_\pi(x, y)<V^\nu(x, y). \label{Sec00B_005}
\end{align}
We consider which continuation strategy is optimal after following $\pi$ up to time $T^\nu_{n_{\neq}}$. 
First, 
we assume that dividend payments according to $\pi$ are allowed only at the next $k\in\bN$ opportunities after $T^\nu_{n_{\neq}}$, that no dividends are paid thereafter, and that capital injections are made by reflection at $0$. 
Then, the expected NPV of dividends and capital injections is
\begin{align}
u_\pi(k, 0):=
\bfE_{(x, y)}&\Bigg{[}\int_{[0,T^\nu_{n_{\neq}+k-1}]}e^{-\frq(t)} \diff L^\pi_t-\beta\int_{[0,T^\nu_{n_{\neq}+k}]}e^{-\frq(t)} \diff R^\pi_t\\
&+e^{-\frq(T^\nu_{n_{\neq}+k})}
\rbra{ \Delta L^\pi_{T^\nu_{n_{\neq}+k}}+v_0(U^{\pi}_{T^\nu_{n_{\neq}+k}}, Y_{T^\nu_{n_{\neq}+k}})}\Bigg{]}. 
\end{align}
By \eqref{ineqg}, $u_\pi(k, 0)$ is not greater than
\begin{align}
u_\pi&(k, 1)\\
&:=
\bfE_{(x, y)}\sbra{\int_{[0,T^\nu_{n_{\neq}+k-1}]}e^{-\frq(t)} \diff L^\pi_t-\beta\int_{[0,T^\nu_{n_{\neq}+k}]}e^{-\frq(t)} \diff R^\pi_t
+e^{-\frq(T^\nu_{n_{\neq}+k})}
\tilde{v}_0(U^{\pi}_{({n_{\neq}+k}-)}, Y_{T^\nu_{n_{\neq}+k}})}\\
&=\bfE_{(x, y)}\Bigg{[}\int_{[0,T^\nu_{n_{\neq}+k-2}]}e^{-\frq(t)} \diff L^\pi_t-\beta\int_{[0,T^\nu_{n_{\neq}+k-1}]}e^{-\frq(t)} \diff R^\pi_t\\
&\qquad+e^{-\frq(T^\nu_{n_{\neq}+k-1})}
\rbra{\Delta L^{\pi}_{T^\nu_{n_{\neq}+k-1}}+\cV^{\nu,(1)}v_0(U^{\pi}_{T^\nu_{n_{\neq}+k-1}}, Y_{T^\nu_{n_{\neq}+k-1}})}\Bigg{]},
\end{align}
where the last equality follows from \eqref{periodicMarkov} and \eqref{optimal_b}. 
After repeating the same argument $k-1$ more times, we find that $u_\pi(k, 0)$ is not greater than
\begin{align}
u_\pi(k, k):=\bfE_{(x, y)}&\Bigg{[}\int_{[0,T^\nu_{n_{\neq}-1}]}e^{-\frq(t)} \diff L^\pi_t-\beta\int_{[0,T^\nu_{n_{\neq}}]}e^{-\frq(t)} \diff R^\pi_t\\
&+e^{-\frq(T^\nu_{n_{\neq}})}
\rbra{\Delta L^{\pi}_{T^\nu_{n_{\neq}}}+\cV^{\nu,(k)}v_0(U^{\pi}_{T^\nu_{n_{\neq}}}, Y_{T^\nu_{n_{\neq}}})}\Bigg{]}.
\end{align}
By taking the limit as $k\to\infty$, Lemmas \ref{Lem308}, 
Fatou's lemma and Lemma \ref{Lem309}, we have 
\begin{align}
v_\pi(x,y)= \lim_{k\to\infty}u_\pi(k, 0)\leq \limsup_{k\to\infty}u_\pi(k, k)
\leq  u_\pi(\infty),
\label{Sec00B_001} 
\end{align} 
where 
\begin{align}
u_\pi(\infty):=\bfE_{(x, y)}&\Bigg{[}\int_{[0,T^\nu_{n_{\neq}-1}]}e^{-\frq(t)} \diff L^\pi_t-\beta\int_{[0,T^\nu_{n_{\neq}}]}e^{-\frq(t)} \diff R^\pi_t\\
&+e^{-\frq(T^\nu_{n_{\neq}})}
\rbra{\Delta L^{\pi}_{T^\nu_{n_{\neq}}}+V^\nu(U^{\pi}_{T^\nu_{n_{\neq}}}, Y_{T^\nu_{n_{\neq}}})}\Bigg{]}. 
\end{align}
By the definition of $T^\nu_{n_{\neq}}$ and recalling \eqref{Lem305_001}, we have on $\{T^\nu_{n_{\neq}} <\infty\}$,  
 \begin{align}
\tilde{V}^\nu(U^{\pi}_{({n_{\neq}}-)}, Y_{T^\nu_{n_{\neq}}})& 
=
g^{U^{\pi}_{(n_{\neq}-)} }_{V^\nu}((U^{\pi}_{(n_{\neq}-)} -\un{\bfb}_{V^\nu} (Y_{T^\nu_{n_{\neq}}}))\lor 0, Y_{T^\nu_{n_{\neq}}}) \\
 &>
g^{U^{\pi}_{(n_{\neq}-)} }_{V^\nu}(\Delta L^{\pi}_{T^\nu_{n_{\neq}}}, Y_{T^\nu_{n_{\neq}}}) 
=\Delta L^{\pi}_{T^\nu_{n_{\neq}}}+V^\nu(U^{\pi}_{T^\nu_{n_{\neq}}}, Y_{T^\nu_{n_{\neq}}}). \label{Sec00B_002}
 \end{align}
Since $\bfP_{(x,y)}(T^\nu_{n_{\neq}}<\infty)>0$ and by \eqref{Sec00B_001} and \eqref{Sec00B_002}, we have 
\begin{align}
v_\pi(x,y)<\bfE_{(x, y)}&\Bigg{[}\int_{[0,T^\nu_{n_{\neq}-1}]}e^{-\frq(t)} \diff L^\pi_t-\beta\int_{[0,T^\nu_{n_{\neq}}]}e^{-\frq(t)} \diff R^\pi_t
+e^{-\frq(T^\nu_{n_{\neq}})}
\tilde{V}^\nu(U^{\pi}_{({n_{\neq}}-)}, Y_{T^\nu_{n_{\neq}}})\Bigg{]}. \ \ \ \
\label{Sec00B_003}
\end{align}
We define a strategy $\pi^\prime$ as the strategy that pays according to $\pi$ up to just before time $T^\nu_{n_{\neq}}$, and according to the MMPCB strategy with barrier $\bar{\bfb}_{V^\nu}$ from time $T^\nu_{n_{\neq}}$.
Then, using the definitions of $\bar{\bfb}_{V^\nu}$ and $\tilde{V}^\nu$, \eqref{periodicMarkov} and the fact that $\pi^\prime  $ belongs to $\Pi^{\nu,\ast}_{V^\nu}$, we have 
\begin{align}
\bfE_{(x, y)}&\sbra{\int_{[0,T^\nu_{n_{\neq}-1}]}e^{-\frq(t)} \diff L^\pi_t-\beta\int_{[0,T^\nu_{n_{\neq}}]}e^{-\frq(t)} \diff R^\pi_t
+e^{-\frq(T^\nu_{n_{\neq}})}
\tilde{V}^\nu(U^{\pi}_{(n_{\neq}-)}, Y_{T^\nu_{n_{\neq}}})}
\\
&=\bfE_{(x, y)}\sbra{\int_{[0,T^\nu_{n_{\neq}}]}e^{-\frq(t)} \diff L^{\pi^\prime}_t-\beta\int_{[0,T^\nu_{n_{\neq}}]}e^{-\frq(t)} \diff R^{\pi^\prime}_t
+e^{-\frq(T^\nu_{n_{\neq}})}
V^\nu(U^{\pi^\prime}_{T^\nu_{n_{\neq}}}, Y_{T^\nu_{n_{\neq}}})}\\
&=\sum_{k\in\bN}\bfE_{(x, y)}\sbra{\int_{[0,T^\nu_{k}]}e^{-\frq(t)} \diff L^{\pi^\prime}_t-\beta\int_{[0,T^\nu_{k}]}e^{-\frq(t)} \diff R^{\pi^\prime}_t+e^{-\frq(T^\nu_{k})}
V^\nu(U^{\pi^\prime}_{T^\nu_{k}}, Y_{T^\nu_{k}});n_{\neq}=k}\\
&\ \ +\bfE_{(x, y)}\sbra{\int_{[0,\infty)}e^{-\frq(t)} \diff L^{\pi^\prime}_t-\beta\int_{[0,\infty)}e^{-\frq(t)} \diff R^{\pi^\prime}_t;n_{\neq}=\infty}\\
&=\sum_{k\in\bN}\bfE_{(x, y)}\sbra{\int_{[0,\infty)}e^{-\frq(t)} \diff L^{\pi^\prime}_t-\beta\int_{[0,\infty)}e^{-\frq(t)} \diff R^{\pi^\prime}_t;n_{\neq}=k}\\
&\ \ +\bfE_{(x, y)}\sbra{\int_{[0,\infty)}e^{-\frq(t)} \diff L^{\pi^\prime}_t-\beta\int_{[0,\infty)}e^{-\frq(t)} \diff R^{\pi^\prime}_t;n_{\neq}=\infty}\\
&=v_{\pi^\prime}(x, y)= V^\nu (x, y).  \label{Sec00B_004}
\end{align}
By \eqref{Sec00B_003} and \eqref{Sec00B_004}, we obtain \eqref{Sec00B_005}. Hence 
$\pi$ is not optimal. The proof is complete. 
\end{proof}
\begin{proof}[Proof of Lemma \ref{right_derivative_lemma}]
Fix a non-negative measurable function $\bfb$ on $E$ and $(x, y)\in\bR\times E$. 
Using $(\sharp)$, we have 
\begin{align}
\varepsilon\bfE_{(x,y )}\sbra{ e^{-\frq({T}^{ (0), +}_{\bfb-})} ;{T}^{ (0), +}_{\bfb-} \leq {\tau}^{ (0), -}_0}
\leq&\bfE_{(x,y )}\sbra{\int_{[0,\infty)} e^{-\frq(t)} \diff (L^{{\pi}^{(\varepsilon)}_{\bfb}}_t-L^{{\pi}^{(0)}_{\bfb}}_t)}\\
\leq&
\varepsilon\bfE_{(x,y )}\sbra{ e^{-\frq({T}^{ (\varepsilon), +}_\bfb)} ;{T}^{ (\varepsilon), +}_\bfb < {\tau}^{ (\varepsilon), -}_{0+}},
\end{align}
and 
\begin{align}
-\varepsilon\bfE_{(x,y )}\sbra{ e^{-\frq({\tau}^{ (0), -}_0)} ;{\tau}^{ (0), -}_0<{T}^{ (0), +}_{\bfb-} }
&\leq
\bfE_{(x,y )}\sbra{\int_{(0,\infty)} e^{-\frq(t)} \diff ( R^{{\pi}^{(\varepsilon)}_{\bfb}}_t-R^{{\pi}^{(0)}_{\bfb}}_t)}\\
&\leq 
-\varepsilon\bfE_{(x,y )}\sbra{ e^{-\frq({\tau}^{ (\varepsilon), -}_{0+})} ;{\tau}^{ (\varepsilon), -}_{0+}< {T}^{ (\varepsilon), +}_\bfb }
.
\end{align}
Therefore, we have, for $ (x, y)  \in \bR \times E$,
\begin{align}
v^\prime_{{\pi}^{\nu}_{\bfb}+}(x, y)&=\lim_{\varepsilon\downarrow0}
\frac{v_{{\pi}^{\nu}_{\bfb}}(x+\varepsilon, y)-v_{{\pi}^{\nu}_{\bfb}}(x, y)}{\varepsilon}
=\lim_{\varepsilon\downarrow0}
\frac{v_{{\pi}^{(\varepsilon)}_{\bfb}}(x, y)-v_{{\pi}^{(0)}_{\bfb}}(x, y)}{\varepsilon}\\
&=\lim_{\varepsilon\downarrow0}
\frac{1}{\varepsilon}
\bfE_{(x,y )}\sbra{\int_{[0,\infty)} e^{-\frq(t)} \diff ( L^{{\pi}^{(\varepsilon)}_{\bfb}}_t-L^{{\pi}^{(0)}_{\bfb}}_t)
-\beta \int_{[0,\infty)} e^{-\frq(t)} \diff ( R^{{\pi}^{(\varepsilon)}_{\bfb}}_t- R^{{\pi}^{(0)}_{\bfb}}_t)}
\\
&=\bfE_{(x,y )}\sbra{ e^{-\frq({T}^{ \nu, +}_{\bfb-})} ;{T}^{ \nu, +}_{\bfb-} \leq {\tau}^{  -}_0}+
\beta\bfE_{(x,y )}\sbra{ e^{-\frq({\tau}^{  -}_0)} ;{\tau}^{  -}_0<{T}^{ \nu, +}_{\bfb-} }.
\end{align}
The proof is complete. 
\end{proof}

\begin{proof}[Proof of Lemma \ref{Lem312}]
By the version of Corollary \ref{Thm302} in which $\hat{\Xi}^\nu$ is replaced by $\hat{\Xi}^\nu_{V^\nu}$, 
the MMPCB strategy with barrier $\un{\bfb}_{V^\nu}$ is optimal. 
By this fact and \eqref{right_derivative}, we have, for $(x, y) \in [0, \infty) \times E$, 
\begin{align}
V^{\nu \prime}_+(x, y)
=&\lim_{\varepsilon\downarrow0}\frac{V^{\nu}(x+\varepsilon, y)-V^{\nu}(x, y)}{\varepsilon}
=\lim_{\varepsilon\downarrow0}\frac{v_{\pi^\nu_{\un{\bfb}_{V^\nu}} }(x+\varepsilon, y)-v_{\pi^\nu_{\un{\bfb}_{V^\nu}} }(x, y)}{\varepsilon}\\
=&\bfE_{(x, y)}\sbra{e^{-\frq(T^{\nu,+}_{\un{\bfb}_{V^\nu}-})};T^{\nu,+}_{\un{\bfb}_{V^\nu}-}\leq\tau^-_0}
+\beta \bfE_{(x, y)}\sbra{e^{-\frq(\tau^-_0)};\tau^-_0<T^{\nu,+}_{\un{\bfb}_{V^\nu}-}}\\
 \leq &\bfE\sbra{e^{-q T^\nu}}
+\beta \bfE_{(x, y)}\sbra{e^{-q\tau^-_0}}. \label{Lem312_001}
\end{align}
By \eqref{Lem312_001}, \eqref{200_01} and the dominated convergence theorem, we have, for $y \in E$, 
\begin{align}
\limsup_{x\to\infty}V^{\nu \prime}_+(x, y)\leq
\bfE\sbra{e^{-q T^\nu}}<1,
\end{align}
and thus $\bar{\bfb}_{V^\nu}(y)<\infty$. The proof is complete. 
\end{proof}
\begin{proof}[Proof of Lemma \ref{Lem317}]
We define the process $M:=\{M_t:t\geq 0\}$ as 
\begin{align}
M_t:= e^{-\frq(t)-rt }1_{\{t<\tau^-_{0+} \}},\qquad t\geq 0. 
\end{align}
Then, $M$ is a decreasing exact multiplicative functional (for the definition of a decreasing exact multiplicative functional, see, e.g., \cite[pp.259--260]{Sha1988}). 
Let us define, for any non-negative measurable function $f$, $t,\alpha\geq 0$, and $(x,y)\in(0, \infty)\times E$, 
\begin{align}
Q_t f(x, y):=\bfE_{(x, y)}\sbra{f(X_t, Y_t)M_t }, \qquad  
\end{align}
and 
\begin{align}
U^{(\alpha)}f(x, y):=\bfE_{(x, y)}\sbra{\int_0^\infty e^{-\alpha t}f(X_t, Y_t)M_t \diff t}. 
\end{align}
Then, $\{Q_t: t\geq 0\}$ is a sub-Markov semigroup with resolvent $\{U^{(\alpha)}: \alpha\geq 0\}$ by \cite[(56.6)]{Sha1988}. 
\par
The function $\rho^{\nu, \bfb}_{p}$ is excessive with respect to $\{Q_t: t\geq 0\}$ since for $(x,y)\in(0, \infty)\times E$ and $t\geq 0$, we have
\begin{align}
Q_t \rho^{\nu, \bfb}_{p}(x, y)= &
\bfE_{(x, y)}\sbra{e^{-\frq(t)-rt}1_{\{t< \tau^-_{0+}\}}\rho^{\nu, \bfb}_{p}(X_t, Y_t)}
=
\bfE_{(x, y)}\sbra{e^{-\frq(t)}1_{\{t< T^\nu\land\tau^-_{0+}\}}\rho^{\nu, \bfb}_{p}(X_t, Y_t)}
\\
=&\beta\bfE_{(x, y)}\sbra{e^{-\frq(\kappa^{\nu,\bfb}_{p,0} )}; t<T^\nu \land \tau^-_{0+}},
\end{align}
by the Markov property at $t$ and the memoryless property of $T^\nu$, and thus  
\begin{align}
Q_t \rho^{\nu, \bfb}_{p}(x, y)\leq \rho^{\nu, \bfb}_{p}(x, y),\qquad t\geq 0,
\end{align}
and 
\begin{align}
\lim_{t\downarrow0}Q_t \rho^{\nu, \bfb}_{p}(x, y)=\lim_{t\downarrow0} \beta\bfE_{(x, y)}\sbra{e^{-\frq(\kappa^{\nu,\bfb}_{p,0} )}; t<T^\nu \land \tau^-_{0+}}= \rho^{\nu, \bfb}_{p}(x, y). 
\end{align}
Therefore, the function $\rho^{\nu, \bfb}_{p}$ is excessive with respect to $\{U^{(\alpha)}: \alpha\geq 0\}$, so by (iv) of (56.13) in \cite{Sha1988}, the map $t\mapsto \rho^{\nu, \bfb}_{p} (X_t, Y_t)$ is right-continuous on $[0, \tau^-_{0+})$, $\bfP_{(x, y)}$-a.s. for any $(x, y)\in(0, \infty)\times E$. 
The proof is complete. 
\end{proof}

\begin{proof}[Proof of Lemma \ref{LemB01a}]
We fix $x_1, x_2 \in\bR$ with $x_1< x_2$. 
We denote by $\kappa^{(x),\bfb}_{p, 0}$ the counterpart of $\kappa^{\nu,\bfb}_{p, 0}$ for $X^{(x)}$ with $x \in \bR$. 
Here, we compare $\rho_{p}^{\nu,\bfb}( x_1, y)=\beta\bfE_{(0, y)}\sbra{e^{-\frq(\kappa^{(x_1),\bfb}_{p,0})}}$ and $\rho_{p}^{\nu,\bfb}( x_2, y)=\beta\bfE_{(0, y)}\sbra{e^{-\frq(\kappa^{(x_2),\bfb}_{p,0})}}$.  
By ($\sharp$), we have $U^{{\pi}^{(x_1)}_{\bfb}}_t\leq U^{{\pi}^{(x_2)}_{\bfb}}_t$
and $R^{{\pi}^{(x_1)}_{\bfb}}_t\geq R^{{\pi}^{(x_2)}_{\bfb}}_t$ for $t \geq 0$ and 
$U^{{\pi}^{(x_1)}_{\bfb}}_t- U^{{\pi}^{(x_2)}_{\bfb}}_t$
and $R^{{\pi}^{(x_1)}_{\bfb}}_t- R^{{\pi}^{(x_2)}_{\bfb}}_t$ are non-decreasing, 
$\bfP_{(0, y)}$-a.s. 
\par
On the event $\{\kappa^{(x_1),\bfb}_{p,0} \geq \kappa^{(x_2),\bfb}_{p,0}\}$, the following three cases exhaust all possibilities $\bfP_{(0,y)}$-a.s.:
\begin{enumerate}
\item
If $U^{{\pi}^{(x_1)}_{\bfb}}_{\kappa^{(x_2),\bfb}_{p,0}-} < U^{{\pi}^{(x_2)}_{\bfb}}_{\kappa^{(x_2),\bfb}_{p,0}-}$, then 
$R^{{\pi}^{(x_1)}_{\bfb}}$ increases on $[\kappa^{(x_2),\bfb}_{p,0},\kappa^{(x_2),\bfb}_{p,0}+\varepsilon)$ for any $\varepsilon>0$, and hence we obtain $\kappa^{(x_1),\bfb}_{p,0} \leq \kappa^{(x_2),\bfb}_{p,0}$. 
\item
If $U^{{\pi}^{(x_1)}_{\bfb}}_{\kappa^{(x_2),\bfb}_{p,0}-} = U^{{\pi}^{(x_2)}_{\bfb}}_{\kappa^{(x_2),\bfb}_{p,0}-}$ and $R^{{\pi}^{(x_2)}_{\bfb}}$ increases on $[\kappa^{(x_2),\bfb}_{p,0},\kappa^{(x_2),\bfb}_{p,0}+\varepsilon)$ for any $\varepsilon>0$, then $R^{{\pi}^{(x_1)}_{\bfb}}$ also increases on such intervals in the same manner as $R^{{\pi}^{(x_2)}_{\bfb}}$. 
Hence $\kappa^{(x_1),\bfb}_{p,0} \leq \kappa^{(x_2),\bfb}_{p,0}$. 
\item
If $U^{{\pi}^{(x_1)}_{\bfb}}_{\kappa^{(x_2),\bfb}_{p,0}-} = U^{{\pi}^{(x_2)}_{\bfb}}_{\kappa^{(x_2),\bfb}_{p,0}-}$ and $R^{{\pi}^{(x_2)}_{\bfb}}$ does not increase on $[\kappa^{(x_2),\bfb}_{p,0},\kappa^{(x_2),\bfb}_{p,0}+\varepsilon)$ for some $\varepsilon>0$, then  $L^{{\pi}^{(x_2)}_{\bfb}}$ must have increased at some time $\mu\in (0, \kappa^{(x_2),\bfb}_{p,0})$ for $U^{{\pi}^{(x_1)}_{\bfb}}$ and $U^{{\pi}^{(x_2)}_{\bfb}}$ to coincide. 
Thereafter, the same value of $\Lambda^p$ is used for both processes when deciding whether the hitting time of $0$ is included in the corresponding $\kappa^{(\cdot),\bfb}_{p,0}$. 
Hence $\kappa^{(x_1),\bfb}_{p,0} \leq \kappa^{(x_2),\bfb}_{p,0}$. 
\end{enumerate}
It follows from these cases that $\kappa^{(x_1),\bfb}_{p,0} \leq \kappa^{(x_2),\bfb}_{p,0}$ holds $\bfP_{(0,y)}$-a.s., and therefore $\rho_{p}^{\nu,\bfb}( x_1, y)\geq \rho_{p}^{\nu,\bfb}( x_2, y)$ holds for $y \in E$. 
The proof is complete.
\end{proof}
\begin{proof}[Proof of Lemma \ref{LemB01}]
We want to prove that there exists a measurable function $p_\bfb$ from $E$ to $[0, 1]$ such that
the identity in \eqref{LemB01_main} holds. 
If the identity in \eqref{LemB01_main} holds, then by using \eqref{periodicMarkov}, we have, for $(x, y) \in \bR \times E$,
\begin{align}
\rho^{\nu, \bfb}_{p_\bfb}&( x, y)\\
=&
\beta\bfE_{(x,y)}\sbra{e^{-\frq(\tau^-_0)};\tau^-_0<T^{\nu,+}_{\bfb-}}
+\sum_{k\in\bN}\beta\bfE_{(x,y)}\sbra{e^{-\frq(\kappa^{\nu,\bfb}_{p_\bfb,0})};T^{\nu,+}_{\bfb-}=T^\nu_k, T^\nu_k\leq\tau^-_0}
\\
=&
\beta\bfE_{(x,y)}\sbra{e^{-\frq(\tau^-_0)};\tau^-_0<T^{\nu,+}_{\bfb-}}
 +\sum_{k\in\bN}\bfE_{(x,y)}\sbra{e^{-\frq(T^\nu_k)}\varrho^{\nu, \bfb}_{p_\bfb}( \bfb(Y_{T^{\nu}_k}), Y_{T^{\nu}_k});T^{\nu,+}_{\bfb-}=T^\nu_k, T^\nu_k\leq\tau^-_0}
\\
=&
\beta\bfE_{(x,y)}\sbra{e^{-\frq(\tau^-_0)};\tau^-_0<T^{\nu,+}_{\bfb-}}
+\bfE_{(x,y)}\sbra{e^{-\frq(T^{\nu,+}_{\bfb-})}\varrho^{\nu, \bfb}_{p_\bfb}( \bfb(Y_{T^{\nu,+}_{\bfb-}}), Y_{T^{\nu,+}_{\bfb-}});T^{\nu,+}_{\bfb-}\leq\tau^-_0}\\
=&
\beta\bfE_{(x,y)}\sbra{e^{-\frq(\tau^-_0)};\tau^-_0<T^{\nu,+}_{\bfb-}}
+\bfE_{(x,y)}\sbra{e^{-\frq(T^{\nu,+}_{\bfb-})};T^{\nu,+}_{\bfb-}\leq\tau^-_0}. \label{LemB02a_001}
\end{align}
By Lemma \ref{LemB01a} and \eqref{LemB02a_001}, and since $\bfb$ belongs to $\Xi^\nu$, we obtain the inequality in \eqref{LemB01_main}. 
\par
(1) We now define candidates for $p_\bfb$ satisfying the required condition. 
For a measurable function $p$ and $y \in E$, we have
\begin{align}
\bfE_{(\bfb(y),y)} \sbra{ e^{-\frq(\cT^{1}_{p,\bfb})}; \cT^{1}_{p,\bfb}\leq\varkappa^{\nu,\bfb}_{p,0}}+\beta& \bfE_{(\bfb(y),y)} \sbra{ e^{-\frq(\varkappa^{\nu,\bfb}_{p,0})}; \varkappa^{\nu,\bfb}_{p,0}<\cT^{1}_{p,\bfb}}  \\
&=(1-p(y))\rho^{\nu,1}_{\bfb}(y) + p(y)\rho^{\nu,2}_{\bfb}(y). 
\label{LemB02_006}
\end{align}
Since $\rho^{\nu,1}_{\bfb}( y)\leq1$ and $\rho^{\nu,2}_{\bfb}( y)\geq 1$, the value
\begin{align}
p_\bfb (y) :=\inf\{a \in [0, 1]: (1-a)\rho^{\nu,1}_{\bfb}(y) + a\rho^{\nu,2}_{\bfb}(y)=1\}\in[0,1],
\end{align}
is well-defined for $y\in E$, and the function $p_\bfb$ is measurable and satisfies
\begin{align}
 (1-p_\bfb(y))\rho^{\nu,1}_{\bfb}(y) + p_\bfb(y)\rho^{\nu,2}_{\bfb}(y)=1,\qquad y\in E. 
\label{LemB02_007}
\end{align}
\par
(2) 
We define, for $n\in\bN$ and $y\in E$,
\begin{align}
\varrho^{\nu, \bfb}_{p_\bfb,(n)}( y):=&
\bfE_{(\bfb(y),y)} \sbra{ e^{-\frq(\cT^{1}_{p_\bfb,\bfb})}\varrho^{\nu, \bfb}_{p_\bfb,(n-1)}( Y_{\cT^{1}_{p_\bfb,\bfb}}); \cT^{1}_{p_\bfb,\bfb}\leq\varkappa^{\nu,\bfb}_{p_\bfb,0}}\\
&\qquad +\beta \bfE_{(\bfb(y),y)} \sbra{ e^{-\frq(\varkappa^{\nu,\bfb}_{p_\bfb,0})}; \varkappa^{\nu,\bfb}_{p_\bfb,0}<\cT^{1}_{p_\bfb,\bfb}},
\label{LemB02_001}
\end{align}
where $\varrho^{\nu, \bfb}_{p_\bfb,(0)}( y):=1$ for $y\in E$. 
We prove 
\begin{align}
\lim_{n\to\infty}\varrho^{\nu, \bfb}_{p_\bfb,(n)}( y)=\varrho^{\nu, \bfb}_{p_\bfb}( \bfb (y), y),\qquad y\in E. 
\label{LemB02_004}
\end{align}
We first prove that, for $n\in\bN$ and $y \in E$
\begin{align}
\varrho^{\nu, \bfb}_{p_\bfb,(n)}( y)-&\varrho^{\nu, \bfb}_{p_\bfb}( \bfb (y), y)\\
&=
\bfE_{(\bfb(y),y)} \sbra{ e^{-\frq(\cT^{n}_{p_\bfb,\bfb})}
\rbra{1-\varrho^{\nu, \bfb}_{p_\bfb}(\bfb( Y_{\cT^{n}_{p_\bfb,\bfb}}), Y_{\cT^{n}_{p_\bfb,\bfb}})}; \cT^{n}_{p_\bfb,\bfb}\leq\varkappa^{\nu,\bfb}_{p_\bfb,0}},
\label{LemB02_002}
\end{align}
by induction.
By using \eqref{periodicMarkov} at $\cT^{1}_{p_\bfb,\bfb}$ in the same way as \eqref{v_right_derivative}, we have, for $y\in E$, 
\begin{align}
\varrho^{\nu, \bfb}_{p_\bfb}( \bfb (y), y)=&
\bfE_{(\bfb(y),y)} \sbra{ e^{-\frq(\cT^{1}_{p_\bfb,\bfb})}\varrho^{\nu, \bfb}_{p_\bfb}(\bfb( Y_{\cT^{1}_{p_\bfb,\bfb}}), Y_{\cT^{1}_{p_\bfb,\bfb}}); \cT^{1}_{p_\bfb,\bfb}\leq\varkappa^{\nu,\bfb}_{p_\bfb,0}}\\
&\qquad\quad+\beta \bfE_{(\bfb(y),y)} \sbra{ e^{-\frq(\varkappa^{\nu,\bfb}_{p_\bfb,0})}; \varkappa^{\nu,\bfb}_{p_\bfb,0}<\cT^{1}_{p_\bfb,\bfb}}.
\label{LemB02_003}
\end{align}
By \eqref{LemB02_001} with $n=1$ and \eqref{LemB02_003}, we have \eqref{LemB02_002} with $n=1$. 
We assume that \eqref{LemB02_002} with $n=k\in\bN$ holds and prove \eqref{LemB02_002} with $n=k+1$. 
By \eqref{LemB02_001} with $n=k+1$ and \eqref{LemB02_003}, we have, for $y\in E$, 
\begin{align}
\varrho^{\nu, \bfb}_{p_\bfb,(k+1)}( y)-&\varrho^{\nu, \bfb}_{p_\bfb}( \bfb (y), y)\\
&=
\bfE_{(\bfb(y),y)} \sbra{ e^{-\frq(\cT^{1}_{p_\bfb,\bfb})}
({\varrho^{\nu, \bfb}_{p_\bfb,(k)}( Y_{\cT^{1}_{p_\bfb,\bfb}})-\varrho^{\nu, \bfb}_{p_\bfb}(\bfb( Y_{\cT^{1}_{p_\bfb,\bfb}}), Y_{\cT^{1}_{p_\bfb,\bfb}})}); \cT^{1}_{p_\bfb,\bfb}\leq\varkappa^{\nu,\bfb}_{p_\bfb,0}}. 
\end{align}
Applying \eqref{LemB02_002} with $n=k$ to the above equation and then using \eqref{periodicMarkov} at $\cT^{1}_{p_\bfb,\bfb}$ in the same way as \eqref{v_right_derivative}, we obtain \eqref{LemB02_002} with $n=k+1$. Therefore, \eqref{LemB02_002} holds for any $n\in\bN$ and $y\in E$. 
Since 
\begin{align}
\lim_{n\to\infty}\absol{\bfE_{(\bfb(y),y)} \sbra{ e^{-\frq(\cT^{n}_{p_\bfb,\bfb})}
\rbra{1-\varrho^{\nu, \bfb}_{p_\bfb}(\bfb( Y_{\cT^{n}_{p_\bfb,\bfb}}), Y_{\cT^{n}_{p_\bfb,\bfb}})}; \cT^{n}_{p_\bfb,\bfb}\leq\varkappa^{\nu,\bfb}_{p_\bfb,0}}}\qquad&\\
\leq
\lim_{n\to\infty}(1+\beta)\bfE_{(\bfb(y),y)} \sbra{ e^{-\frq(\cT^{n}_{p_\bfb,\bfb})}
; \cT^{n}_{p_\bfb,\bfb}\leq\varkappa^{\nu,\bfb}_{p_\bfb,0}}&=0, 
\end{align}
we obtain \eqref{LemB02_004}. 
\par
(3) We prove, for $n\in\bN$, 
\begin{align}
\varrho^{\nu, \bfb}_{p_\bfb,(n)}( y)=1, \qquad  y\in E, 
\label{LemB02_005}
\end{align}
by induction. 
By \eqref{LemB02_006}, \eqref{LemB02_007} and \eqref{LemB02_001} with $n=1$, 
we have \eqref{LemB02_005} with $n=1$. 
If \eqref{LemB02_005} with $n=k\in\bN$ holds, then $\varrho^{\nu, \bfb}_{p_\bfb,(k)}( Y_{\cT^{1}_{p_\bfb,\bfb}})$ in \eqref{LemB02_001} with $n=k+1$ can be replaced by $1$. 
Thus, by applying \eqref{LemB02_006} and \eqref{LemB02_007} to the resulting expression, we obtain \eqref{LemB02_005} with $n=k+1$. 
Therefore, we obtain \eqref{LemB02_005} for any $n\in\bN$. 
\par
By \eqref{LemB02_004} and \eqref{LemB02_005}, we obtain the identity in \eqref{LemB01_main}. 
The proof is complete. 
\end{proof}

\section{Proof of Proposition \ref{Prop303}}
\label{Prf_Prop303}
Let us define, for $(x, y)\in\bR\times E$,  
\begin{align}
\rho^{\nu,1}_{\bfb}(x, y)&:=\bfE_{(x,y)} \sbra{ e^{-\frq(T^{\nu,+}_{\bfb-})}; T^{\nu,+}_{\bfb-}\leq\tau^-_0}+\beta \bfE_{(x,y)} \sbra{ e^{-\frq(\tau^-_0)}; \tau^-_0<T^{\nu,+}_{\bfb-}}, 
\\
\rho^{\nu,2}_{\bfb}(x, y)&:=\bfE_{(x,y)} \sbra{ e^{-\frq(T^{\nu,+}_{\bfb})}; T^{\nu,+}_{\bfb}<\tau^-_{0+}}+\beta \bfE_{(x,y)} \sbra{ e^{-\frq(\tau^-_{0+})}; \tau^-_{0+}< T^{\nu,+}_{\bfb}}.  
\end{align}
Then, for $y\in E$, 
\begin{align}
x\mapsto \rho^{\nu,2}_{\bfb}(x, y)\text{ is left-continuous and }
\lim_{z\downarrow x}\rho^{\nu,2}_{\bfb}(z, y)=\rho^{\nu,1}_{\bfb}(x, y). 
\label{Prop303_003}
\end{align} 
\par
(1) We assume that $\bfb \in \hat{\Xi}^\nu$ and prove that $m_y (E\backslash \hat{E}^\bfb)=0$ for all $y \in E$.  
Let us define 
\begin{align}
\bfb_2 (y)=
\begin{cases}
\bfb (y) ,\qquad &y \in E^\bfb, \\
\un{\bfb}^\nu (y),\qquad &y \in E\backslash E^\bfb.
\end{cases}
\label{Prop303_008}
\end{align}
Then, $\bfb_2 \in \Xi^\nu$ and thus 
\begin{align}
\hat{E}^{\bfb_2}=E, \label{Prop303_001}
\end{align} 
by Theorem \ref{Thm302a}. 
Since $m_y (E\backslash E^\bfb)=0$ for all $y\in E$, we have 
\begin{align}
Y_{T^\nu_k} \in E^\bfb, \text{ and hence }\bfb(Y_{T^\nu_k})=\bfb_2(Y_{T^\nu_k}), \qquad k\in\bN,
\end{align}
$\bfP_{(x, y)}$-a.s. for all $(x, y)\in\bR\times E$. 
Thus, we have 
\begin{align}
\rho^{\nu,1}_{\bfb}(\bfb_2(y),  y)=
\rho^{\nu,1}_{\bfb_2}( y) ,\qquad 
\rho^{\nu,2}_{\bfb}(\bfb_2(y),  y)=
\rho^{\nu,2}_{\bfb_2}( y),\qquad y \in E. \label{Prop303_002}
\end{align}
By \eqref{Prop303_008}, \eqref{Prop303_001} and \eqref{Prop303_002}, we have 
$E^\bfb \subset\hat{E}^\bfb$ and hence $m_y (E\backslash \hat{E}^\bfb)=0$ for all $y \in E$. 

\par

(2) We assume $m_y (E\backslash \hat{E}^\bfb)=0$ for all $y \in E$ 
and prove that $\bfb \in \hat{\Xi}^\nu$. 
We define 
\begin{align}
\bfb_3 (y)=
\begin{cases}
\bfb (y) ,\qquad &y \in \hat{E}^\bfb, \\
\inf \{x \in [0, \infty) : \rho^{\nu,2}_{\bfb}(x, y) \leq 1\}   ,\qquad &y \in E\backslash \hat{E}^\bfb.
\end{cases}\label{Prop303_006}
\end{align}
Since $\lim_{x\to\infty}\rho^{\nu,2}_{\bfb}(x, y)=\bfE^Y_y\sbra{e^{-\frq (T^\nu)}}<1$, 
we have $\inf \{x \in [0, \infty) : \rho^{\nu,2}_{\bfb}(x, y) \leq 1\}<\infty$ for $y \in E$. 
Furthermore, by combining \eqref{Prop303_003} and \eqref{Prop303_006}, we obtain 
\begin{align}
\rho^{\nu,1}_{\bfb}(\bfb_3(y), y)\leq 1,\qquad
\rho^{\nu,2}_{\bfb}(\bfb_3(y ), y)\geq 1, \qquad y \in E. 
\label{Prop303_005}
\end{align}
Since $m_y (E\backslash \hat{E}^\bfb)=0$ for all $y\in E$, we have
\begin{align}
Y_{T^\nu_k} \in \hat{E}^\bfb, \text{ and hence }\bfb(Y_{T^\nu_k})=\bfb_3(Y_{T^\nu_k}), \qquad k\in\bN,
\end{align}
$\bfP_{(x, y)}$-a.s. for all $(x, y)\in\bR\times E$. Thus, we have 
\begin{align}
\rho^{\nu,1}_{\bfb}(\bfb_3(y), y)=\rho^{\nu,1}_{\bfb_3}( y) ,\qquad 
\rho^{\nu,2}_{\bfb}(\bfb_3(y), y)=\rho^{\nu,2}_{\bfb_3}( y),\qquad y \in E. \label{Prop303_004}
\end{align}
By \eqref{Prop303_005}, \eqref{Prop303_004} and Theorem \ref{Thm302a}, we have
\begin{align}
E^{\bfb_3}=E.\label{Prop303_007}
\end{align} 
By \eqref{Prop303_006} and \eqref{Prop303_007}, we have $\hat{E}^\bfb \subset E^\bfb$, and hence $m_y (E\backslash E^\bfb)=0$ for all $y \in E$. 
The proof is complete. 

\section{Proof of Theorem \ref{barrier_approximation_P}}
\label{Sec00C}
When $\nu$ is the exponential distribution with intensity $n$, 
we use notation obtained by replacing $\nu$ in Section \ref{Optimality_pcb} by $\rmP, n$. 
For $n\in\bN$ and $f\in\Gamma^{\rmP, n+1}$, we define $\cU^{\rmP, n}$ for $(x, y)\in \bR\times E$ by  
\begin{align}
\cU^{\rmP, n} f(x, y)=&\bfE_{(x,y)}\bigg{[}-\beta\int_{[0, T^{\rmP,n+1}]}e^{-\frq(t)}\diff ((-\un{X}_t\lor0))
\\
&+e^{-\frq(T^{\rmP,n+1})}\bigg{\{}1_{\{T^{\rmP,n+1}<T^{\rmP,n}\}}f(X^0_{T^{\rmP,n+1}}, Y_{T^{\rmP,n+1}})\\
&+
1_{\{T^{\rmP,n+1}=T^{\rmP,n}\}}
\rbra{\tilde{f}(X^0_{T^{\rmP,n+1}}, Y_{T^{\rmP,n+1}})}
\bigg{\}}
\bigg{]}. 
\end{align}
Focusing on condition (iii) in the definition of the class $\Gamma^\nu$, a slight extension of \eqref{upperbound_by_0strategy} shows that $\Gamma^{{\rmP, n}}\subset \Gamma^{{\rmP, n+1}}$ holds, and that $\cU^{\rmP, n}$ can also be applied to the functions in $\Gamma^{{\rmP, n}}$. 
By applying the same method as in the proof of Lemma \ref{Lem306}, we can verify that $\cU^{\rmP, n} f\in \Gamma^{\rmP, n+1}$ for $f \in \Gamma^{\rmP, n+1}$. 
We can define, for $k\in\bN$,
\begin{align}
U^{\rmP, n}_{(k)}(x, y):=\cU^{\rmP, n}(U^{\rmP, n}_{(k-1)}) (x, y),\qquad (x, y)\in\bR\times E,
\end{align}
inductively, where $U^{\rmP, n}_{(0)}(x, y):=v_0(x, y)$ for $(x, y)\in \bR\times E$.
For simplicity, $\cV^{\rmP, n, (k)} {v_0}$ will be denoted by $V^{\rmP, n}_{(k)} $ for $n\in\bN$ and $k\in\bN\cup\{0\}$. 
\par
By an argument similar to the proof of Lemma \ref{Lem309}, we obtain the following lemma. 
\begin{Lem}\label{LemC01}
For each $n\in\bN$, we have 
\begin{align}
\lim_{k\to\infty}U^{\rmP, n}_{(k)} (x, y)= V^{\rmP, n}(x, y),\qquad (x, y)\in\bR\times E. \label{LemB01_statement}
\end{align}
\end{Lem}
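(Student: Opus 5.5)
The plan is to repeat the dynamic programming argument of Lemmas \ref{Lem307}--\ref{Lem309}. The time grid will be the jump times of $\hat{N}^{(n+1)}$ instead of those of $\hat{N}^{(n)}$, and each jump of $\hat{N}^{(n+1)}$ is marked by whether it is also a jump of $\hat{N}^{(n)}$. Write $T^{\rmP,n+1}_k$ for the $k$-th jump time of $\hat{N}^{(n+1)}$. By the Poisson superposition structure, the marks $1_{\{\Delta\hat{N}^{(n)}_{T^{\rmP,n+1}_k}\neq0\}}$ are i.i.d.\ Bernoulli$(n/(n+1))$ and independent of the interarrival times and of $(X,Y)$. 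Memorylessness then gives the analogue of \eqref{periodicMarkov} at each $T^{\rmP,n+1}_k$, for the triple $(X,Y)$, $\hat{N}^{(n+1)}$, $\hat{N}^{(n)}$ shifted by $T^{\rmP,n+1}_k$. This is the only new probabilistic input. With it, $\cU^{\rmP,n}$ is the one-step Bellman operator on the $\hat{N}^{(n+1)}$-grid for strategies that may pay dividends only at marked times.

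\textbf{Step 1 (upper bound).} Fix $k$. The iterate $U^{\rmP,n}_{(k)}$ is well defined and lies in $\Gamma^{\rmP,n+1}$ by the stated invariance of $\Gamma^{\rmP,n+1}$ under $\cU^{\rmP,n}$. Hence $\un{\bfb}_{U^{\rmP,n}_{(j)}}$ is defined for every $j$. Define a strategy $\pi^{\ast}_{[k]}\in\un{\Pi}^{\rmP,n}$ as follows: at the $j$-th jump of $\hat{N}^{(n+1)}$, $j\le k$, if that jump is marked, pay the excess over $\un{\bfb}_{U^{\rmP,n}_{(k-j)}}(Y)$; pay nothing otherwise and nothing after the $k$-th jump; inject capital by reflection at $0$. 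As in \eqref{optimal_b} and \eqref{Lem307_003}, iterating the Markov property at the grid times gives $U^{\rmP,n}_{(k)}=v_{\pi^\ast_{[k]}}$. Since dividends are paid only at jumps of $\hat{N}^{(n)}$, this yields $U^{\rmP,n}_{(k)}\le V^{\rmP,n}$.

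\textbf{Step 2 (lower bound).} For $\pi\in\un{\Pi}^{\rmP,n}$, let $\pi_{[k]}$ be $\pi$ with dividends stopped after $T^{\rmP,n+1}_k$, as in \eqref{def_pi_n}. I will show $v_{\pi_{[k]}}\le U^{\rmP,n}_{(k)}$ by the backward induction \eqref{induction_inequality}, with $T^\nu_{n-l}$ replaced by $T^{\rmP,n+1}_{k-l}$.
\begin{itemize}
\item At an unmarked grid time, $\pi$ pays nothing, so the controlled process equals $U^\pi_{(\cdot-)}$, matching the $f$-term of $\cU^{\rmP,n}$.
\item At a marked grid time, the pointwise inequality \eqref{ineqg}/\eqref{Lem307_002} for the concave function $U^{\rmP,n}_{(l)}$ shows that the dividend plus continuation value is at most $\tilde{U}^{\rmP,n}_{(l)}(U^\pi_{(\cdot-)},Y)$, matching the $\tilde f$-term.
\item The capital injection terms coincide by \eqref{303_02}.
\end{itemize}
Conditioning at $T^{\rmP,n+1}_{k-l-1}$ then closes the induction step.

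\textbf{Step 3 (limit).} By Lemma \ref{Lem304a} it suffices to work in $\un{\Pi}^{\rmP,n}\subset\un{\Pi}^{\rmP,n+1}$. I will show $v_{\pi_{[k]}}\to v_\pi$ exactly as in Lemma \ref{Lem308}. The difference is controlled by tail integrals after $T^{\rmP,n+1}_{k+1}$, which vanish by the domination \eqref{upperbound_by_0strategy}, \eqref{finiteness} and dominated convergence, since $T^{\rmP,n+1}_k\to\infty$. The $\varepsilon$-optimal argument of Lemma \ref{Lem309} then gives $\liminf_k U^{\rmP,n}_{(k)}\ge V^{\rmP,n}$, and Step 1 gives $\limsup_k U^{\rmP,n}_{(k)}\le V^{\rmP,n}$. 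Together these give \eqref{LemB01_statement}.

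I expect the main obstacle to be the bookkeeping in Step 2, namely justifying the mark-dependent Markov property at $T^{\rmP,n+1}_k$. Concretely, I need that, conditionally on $\cF_{T^{\rmP,n+1}_k}$, the next interarrival time is exponential with rate $n+1$, the next mark is independent of it, and the post-$T^{\rmP,n+1}_k$ future has the law given by the analogue of \eqref{periodicMarkov} started at $(X_{T^{\rmP,n+1}_k},Y_{T^{\rmP,n+1}_k})$. I would obtain this from the product-space construction of Section \ref{PCcase}, applied to the family $\{N^{(j)}\}$, together with the strong Markov property of Poisson processes.
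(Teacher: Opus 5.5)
Your proposal is correct and is essentially the paper's argument: Step 2 is exactly the paper's indicator-based adaptation of \eqref{induction_inequality} to the $\hat{N}^{(n+1)}$-grid, and Step 3 uses the same Lemma~\ref{Lem308}-type truncation and dominated convergence. The differences are small. For the upper bound, the paper applies the monotone operator $\cU^{\rmP,n}$ to $U^{\rmP,n}_{(l)}\le V^{\rmP,n}$ and uses $\cU^{\rmP,n}V^{\rmP,n}=\cV^{\rmP,n}V^{\rmP,n}=V^{\rmP,n}$ (see \eqref{LemC01_002} and \eqref{Lem310_003}), instead of realizing $U^{\rmP,n}_{(k)}$ by your greedy strategy. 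For the lower bound, it truncates the already optimal strategy $\pi^{\rmP,n}_{\un{\bfb}^{\rmP,n}}$ rather than $\varepsilon$-optimal ones. Your version is therefore slightly more self-contained, while the paper's reuses the optimality results of Section~\ref{Optimality_pcb}.
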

\begin{proof}
We fix $n\in\bN$. 
We prove, for $k\in\bN\cup\{0\}$, 
\begin{align}
U^{\rmP, n}_{ (k)} (x, y)\leq  V^{\rmP, n}(x, y),\qquad (x, y)\in\bR\times E. \label{LemC1_001}
\end{align}
When $k=0$, \eqref{LemC1_001} holds since $U^{\rmP, n}_{ (0)} (x, y)= {v_0}(x, y)$ for $(x, y)\in\bR\times E$. 
We assume that \eqref{LemC1_001} holds when $k=l\in\bN\cup\{0\}$ and prove that \eqref{LemC1_001} holds when $k=l+1$. 
By \eqref{optimal_b}, \eqref{periodicMarkov} at $T^{\rmP, n+1}$ and the memoryless property of the exponential distribution, we have
\begin{align}
\cV^{\rmP, n}& V^{\rmP,n}(x, y)\\
&=
\bfE_{(x,y)}\bigg{[}-\beta\int_{[0, T^{\rmP,n+1}]}e^{-\frq(t)}\diff ((-\un{X}_t\lor0))
\\
&+e^{-\frq(T^{\rmP,n+1})}\bigg{\{}1_{\{T^{\rmP,n+1}<T^{\rmP,n}\}}
\bigg{(}-\beta\int_{( T^{\rmP,n+1},T^{\rmP,n}]}e^{-\frq(t)}\diff ((-\un{X}_t\lor0))\\
&+ 
e^{-(\frq(T^{\rmP,n})-\frq(T^{\rmP,n+1}))}\tilde{V}^{\rmP,n}(X^0_{T^{\rmP,n}}, Y_{T^{\rmP,n}})\bigg{)}
+
1_{\{T^{\rmP,n+1}=T^{\rmP,n}\}}
\tilde{V}^{\rmP,n}(X^0_{T^{\rmP,n+1}}, Y_{T^{\rmP,n+1}})
\bigg{\}}
\bigg{]}
\\
=&
\bfE_{(x,y)}\bigg{[}-\beta\int_{[0, T^{\rmP,n+1}]}e^{-\frq(t)}\diff ((-\un{X}_t\lor0))
+e^{-\frq(T^{\rmP,n+1})}\bigg{\{}1_{\{T^{\rmP,n+1}<T^{\rmP,n}\}}
V^{\rmP,n}(X^0_{T^{\rmP,n+1}}, Y_{T^{\rmP,n+1}})\\
&+
1_{\{T^{\rmP,n+1}=T^{\rmP,n}\}}
\tilde{V}^{\rmP,n}(X^0_{T^{\rmP,n+1}}, Y_{T^{\rmP,n+1}})
\bigg{\}}
\bigg{]}
\\
=&\cU^{\rmP, n} V^{\rmP,n}(x, y).
\label{LemC01_002}
\end{align}
Applying the operator $\cU^{\rmP, n}$ to both sides of \eqref{LemC1_001} with $k=l$, and using 
 \eqref{Lem310_003} with $\nu$ replaced by $\rmP,n$ and  \eqref{LemC01_002}, we have \eqref{LemC1_001} with $k=l+1$. 
In conclusion, we have 
\begin{align}
\limsup_{k\to\infty}U^{\rmP, n}_{ (k)} (x, y)\leq  V^{\rmP, n}(x, y),\qquad (x, y)\in\bR\times E. \label{LemB01_main_001}
\end{align}
\par
Here, taking \eqref{optimal_b} into account and repeating the argument in the same manner as in \eqref{LemC01_002}, with \eqref{periodicMarkov} applied at $T^{\rmP, n+1}_1,\dots, T^{\rmP, n+1}_k$, we obtain the following interpretation of $U^{\rmP, n}_{ (k)}$. 
It is the value function for dividends and capital injections over the class of strategies in $\un{\Pi}^{\rmP, n}$ under the additional constraint that dividends may be paid only at the jump times of $\hat{N}^{(n)}$ that coincide with one of the first $k$ jump times of $\hat{N}^{(n+1)}$. 
We define $\pi^{{[k]}_{(n)}}_{\un{\bfb}^{\rmP, n}}\in\un{\Pi}^{\rmP, n}$ as a strategy that uses the same control as $\pi^{\rmP, n}_{\un{\bfb}^{\rmP, n}}$ until time $T^{\rmP, n+1}_k$, and does not pay dividends thereafter. 
Using the dominated convergence theorem as in the proof of Lemma \ref{Lem308}, we have 
\begin{align}
\lim_{k\to\infty}v_{\pi^{{[k]}_{(n)}}_{\un{\bfb}^{\rmP, n}}}(x, y)
=v_{\pi^{\rmP, n}_{\un{\bfb}^{\rmP, n}}}(x, y), \qquad (x, y) \in \bR \times E. 
\label{SecC002}
\end{align}
Also, in the proof of \eqref{induction_inequality} 
with $\nu$ replaced by $\rmP, n+1$, by replacing 
$\cV^{\rmP, n+1,  (k)}v_0$ with $U^{\rmP, n}_{(k)}$ and using the same indicator-based decomposition of expectations as in \eqref{LemC01_002}, we obtain 
\begin{align}
v_{\pi^{{[k]}_{(n)}}_{\un{\bfb}^{\rmP, n}}}(x, y) \leq U^{\rmP, n}_{(k)}(x,y),\qquad (x, y)\in\bR
\times E. \label{SecC001}
\end{align}
Thus, for $k\in\bN$ and $(x, y)\in\bR\times E$, we have 
\begin{align}
&V^{\rmP, n}(x, y)-U^{\rmP, n}_{ (k)} (x, y)\leq V^{\rmP, n}(x, y)-v_{\pi^{{[k]}_{(n)}}_{\un{\bfb}^{\rmP, n}}}(x, y)\\
&\leq V^{\rmP, n}(x, y)-v_{\pi^{\rmP, n}_{\un{\bfb}^{\rmP, n}}}(x, y)
+\absol{v_{\pi^{\rmP, n}_{\un{\bfb}^{\rmP, n}}}(x, y)-v_{\pi^{{[k]}_{(n)}}_{\un{\bfb}^{\rmP, n}}}(x, y)}
=\absol{v_{\pi^{\rmP, n}_{\un{\bfb}^{\rmP, n}}}(x, y)-v_{\pi^{{[k]}_{(n)}}_{\un{\bfb}^{\rmP, n}}}(x, y)}, 
\label{Lem309_002C}
\end{align}
where the first inequality follows from \eqref{SecC001}. 
By taking the limit of \eqref{Lem309_002C} as $k\to \infty$ and by \eqref{SecC002}, we have
\begin{align}
V^{\rmP, n}(x, y)-\liminf_{k\to\infty}U^{\rmP, n}_{ (k)} (x, y)
\leq0, \qquad (x, y)\in\bR\times E. \label{Lem309_005C}
\end{align}
By \eqref{LemB01_main_001} and \eqref{Lem309_005C}, we obtain \eqref{LemB01_statement}. 
The proof is complete. 
\end{proof}
\begin{proof}[Proof of Theorem \ref{barrier_approximation_P}]
As in the proof of Theorem \ref{barrier_approximation_1}, it suffices to prove that, for $n\in\bN$,  
\begin{align}
V^{\rmP, n\prime}_{+}(x, y)
\leq V^{\rmP, {n+1}\prime}_{+}(x, y),\qquad (x,y)\in \bR\times E, \label{barrier_result_P}
\end{align}
where $V^{\rmP, n\prime}_+$ is the right derivative of $V^{\rmP, n}$. 
\par
The functions $x\mapsto V^{\rmP, n}(x, y)$, $x\mapsto V^{\rmP, n+1}(x, y)$, $x\mapsto V^{\rmP, n}_{(k)}(x, y)$ and $x\mapsto U^{\rmP, n}_{(k)}(x, y)$ are concave for $n, k\in\bN$ and $y \in E$. 
Hence, by Lemma \ref{Lem309}, 
Lemma \ref{LemC01}, \cite[Theorem B.4.2.3]{HirLem2001} and \cite[Theorem 1.1]{Lac1982},  
we have 
$
\lim_{k\to\infty}{V^{\rmP, n+1\prime}_{(k)+}}(x, y)={V^{\rmP, n+1\prime}_+}(x, y)$ and $\lim_{k\to\infty}{U^{\rmP, n\prime}_{(k)+}}(x,  y)={V^{\rmP, n\prime}_+}(x, y)$ for Lebesgue-a.e. $x$, for any $n\in\bN$ and $y \in E$. 
Therefore, if we show that for $n \in \bN$, 
\begin{align}
U^{\rmP, n\prime}_{(k)+}(x, y)
\leq V^{\rmP, n+1\prime}_{(k)+}(x, y),\qquad k\in\bN\cup\{0 \},~(x,y)\in \bR\times E, \label{barrier_convergence_P}
\end{align}
then \eqref{barrier_result_P} follows.
In the following, we prove \eqref{barrier_convergence_P} by induction on $k$. 
\par 
For $k=0$, since both ${U^{\rmP, n}_{(0)}}$ and ${V^{\rmP,n+1}_{(0)}}$ equal $v_0$, \eqref{barrier_convergence_P} holds. 
Assuming that \eqref{barrier_convergence_P} holds for $k=l\in\bN\cup\{0\}$, we show that \eqref{barrier_convergence_P} also holds for $k=l+1$. 
By applying almost the same method as in the proof of \eqref{Lem306_004}, we have, 
for $f\in\Gamma^{\rmP, n+1}$ and $(x,y)\in\bR\times E$, 
\begin{align}
{(\cU^{\rmP, n}f)}^\prime_+ &(x, y)
= 
\bfE_{(x,y)}\bigg{[}e^{-\frq(T^{\rmP,n+1}\land\tau^-_0)}\bigg{\{}1_{\{T^{\rmP,n+1}<T^{\rmP,n}, \un{X}_{T^{\rmP,n+1} }\geq0\}}
f^\prime_{+}(X_{T^{\rmP,n+1}}, Y_{T^{\rmP,n+1}})\\
&+
1_{\{T^{\rmP,n+1}=T^{\rmP,n},\un{X}_{T^{\rmP,n+1} }\geq0\}}
\rbra{\tilde{f}^{\prime}_{+}(X_{T^{\rmP,n+1}}, Y_{T^{\rmP,n+1}})}
+1_{\{\un{X}_{T^{\rmP,n+1} }<0\}}\beta
\bigg{\}}
\bigg{]}. \label{strong_Markov_P}
\end{align}
By \eqref{Lem306_004} and \eqref{strong_Markov_P}, 
we have, for $f, g\in \Gamma^{\rmP, n+1}$ with $f^\prime_+(x, y)\leq g^\prime_+(x, y)$ for $(x,y)\in\bR\times E$,
\begin{align}
{(\cU^{\rmP, n}f)}^\prime_+(x ,y )\leq {(\cV^{\rmP, n+1}g)}^\prime_+(x ,y ),
\qquad (x,y)\in\bR\times E. 
\label{compare_periodic}
\end{align} 
By the definitions of $U^{\rmP, n}_{(l+1)}$ and $V^{\rmP, n+1}_{(l+1)}$, \eqref{barrier_convergence_P} with $k=l$ and \eqref{compare_periodic}, we have \eqref{barrier_convergence_P} with $k=l+1$.  
The proof is complete. 
\end{proof}


\bibliographystyle{jplain}
\bibliography{NOBA_references_06}

\end{document}